\newtheorem{thm}{Theorem}[section]
\newtheorem{lem}[thm]{Lemma}
\newtheorem{prop}[thm]{Proposition}
\newtheorem{cor}[thm]{Corollary}
\theoremstyle{definition}
\newtheorem{defn}[thm]{Definition}
\newtheorem{remark}[thm]{Remark}
\begin{document}

\title[Infinite Energy Equivariant Harmonic Maps]{Infinite energy equivariant harmonic maps, domination, and anti-de sitter 3-manifolds}
\author{Nathaniel Sagman}

\address{Mathematics Dept., MC 253-37, Caltech, Pasadena, CA 91125} \email{nsagman@caltech.edu}

\maketitle
\begin{abstract}
  We generalize a well-known existence and uniqueness result for equivariant harmonic maps due to Corlette, Donaldson, and Labourie to a non-compact infinite energy setting and analyze the asymptotic behaviour of the harmonic maps. When the relevant  representation is Fuchsian and has hyperbolic monodromy, our construction recovers a family of harmonic maps originally studied by Wolf.
  
  We employ these maps to solve a domination problem for representations. In particular, following ideas laid out by Deroin-Tholozan, we prove that any representation from a finitely generated free group to the isometry group of a CAT$(-1)$ Hadamard manifold is strictly dominated in length spectrum by a large collection of Fuchsian ones. As an intermediate step in the proof, we obtain a result of independent interest: parametrizations of certain Teichm{\"u}ller spaces by holomorphic quadratic differentials. The main consequence of the domination result is the existence of a new collection of anti-de Sitter $3$-manifolds. We also present an application to the theory of maximal immersions into the Grassmanian of timelike planes in $\mathbb{R}^{2,2}$.
\end{abstract}

\begin{section}{Introduction}
Harmonic maps play a special role in the theory of geometric structures on manifolds. The existence results of Corlette, Donaldson, and Labourie link the purely algebraic data of a matrix representation of a discrete group to a geometric object--an equivariant harmonic map between manifolds--realising the prescribed transformations. In this paper we generalize their work to a non-compact setting and apply it to the study of domination between representations.

Let $\Gamma$ be a discrete group and for $k=1,2$ let $\rho_k : \Gamma \to \textrm{Isom}(X_k,g_k)$ be representations into the isometry groups of Riemannian manifolds $(X_k,g_k)$. A function $f:X_1\to X_2$ is $(\rho_1,\rho_2)$-equivariant if for all $\gamma\in \Gamma$ and $x\in X_1$, $$f(\rho_1(\gamma)\cdot x) = \rho_2(\gamma)\cdot f(x).$$ $\rho_1$ \textit{dominates} $\rho_2$ if there exists a $1$-Lipschitz $(\rho_1,\rho_2)$-equivariant map. The domination is \textit{strict} if the Lipschitz constant can be made strictly smaller than $1$. The translation length of an isometry $\gamma$ of a metric space $(X,d)$ is $$\ell(\gamma) = \inf_{x\in X}d(x,\gamma\cdot x).$$ $\rho_1$ dominates $\rho_2$ \textit{in length spectrum} if there is a $\lambda\in [0,1]$ such that $$\ell(\rho_2(\gamma))\leq \lambda \ell(\rho_1(\gamma))$$ for all $\gamma\in \Gamma$. This domination is strict if $\lambda<1$. From the definitions, (strict) domination implies (strict) domination in length spectrum.

Domination is essential to understanding complete manifolds locally modeled on $G=\textrm{PO}(n,1)_0$: a geometrically finite representation $\rho_1:\Gamma\to G$ strictly dominates $\rho_2:\Gamma\to G$ if and only if the $(\rho_1,\rho_2)$-action on $G$ by left and right multiplication is properly discontinous \cite{GK}. For $n=2$ these are the \textit{anti-de Sitter} (AdS) $3$-manifolds, and for $n=3$ we have the $3$-dimensional complex \textit{holomorphic-Riemannian} $3$-manifolds of constant non-zero curvature (see \cite{DZ} for details).

When $\Gamma$ is a closed surface group the landscape is well understood. In \cite{DT}, Deroin-Tholozan found that, given such a group $\Gamma$, any representation $\rho:\Gamma \to \textrm{Isom}(X,g)$ into the isometry group of a CAT($-1$) Hadamard manifold $X$ is strictly dominated by a Fuchsian representation unless $\rho$ stabilizes a totally geodesic plane of constant curvature $-1$ in which its action is Fuchsian. Gu{\'e}ritaud, Kassel, and Wolff proved the same result independently in the case $X=\mathbb{H}$ by realizing surface group representations geometrically as the holonomies of \textit{folded hyperbolic surfaces} (see \cite{GKW}). These results led to a new collection of closed AdS $3$-manifolds. In a follow-up paper \cite{T}, Tholozan showed that the representations from \cite{DT} exhaust the list of dominating pairs and (topologically) parametrized the deformation space of AdS structures as $$T(\Gamma)\times \textrm{Hom}^{nf}(\Gamma, \textrm{PSL}_2(\mathbb{R})).$$ Here $T(\Gamma)$ is the Teichm{\"u}ller space for $\Gamma$, and $\textrm{Hom}^{nf}(\Gamma, \textrm{PSL}_2(\mathbb{R}))$ is the subspace of the $\textrm{PSL}_2(\mathbb{R})$-deformation space for $\Gamma$ consisting of non-Fuchsian representations.

In this paper we focus on the case that $\Gamma$ is the fundamental group of a finite volume hyperbolic orbifold and examine $\textrm{PO}(2,1)_0\simeq \textrm{PSL}_2(\mathbb{R})$ geometry. By the Selberg lemma we reduce to the case where $\Gamma$ is a free group. Upon embedding $\Gamma$ into $\textrm{PSL}_2(\mathbb{R})$ as the fundamental group of a complete finite volume non-compact manifold $M$, we establish that any representation $\rho$ to the isometry group of a $\textrm{CAT}(-1)$ Hadamard manifold $X$ is dominated in length spectrum by a certain space of Fuchsian representations. By taking $X=\mathbb{H}$ this produces a collection of new AdS $3$-manifolds. The domination results in the present paper can be seen as the non-compact analogue of the work done in \cite{DT}. The space of Fuchsian representations strictly dominating a given non-Fuchsian representation is more complicated in the non-compact setting.

\begin{subsection}{Statement of main results}
Henceforth a manifold that is ``complete, finite volume" is implicitly understood to be non-compact. Owing to the classical Cartan-Hadamard theorem, a Riemannian manifold $(X,g)$ is \textit{Hadamard} if it is complete, simply connected, and non-positively curved. Such a manifold is $\textrm{CAT}(-\kappa)$, $\kappa\geq 0$, if all sectional curvatures are $\leq -\kappa$. See \cite{Bridson} for information on $\textrm{CAT}(-\kappa)$ metric spaces. When describing a fundamental group we suppress dependence on a basepoint. We often identify the fundamental group with the group of deck transformations without a change in notation. If $\Gamma$ acts isometrically on a Riemannian manifold and $\rho$ is a representation, an $(\textrm{id},\rho)$-equivariant map is simply called $\rho$-equivariant. The function $\Lambda: \mathbb{R}\to \mathbb{C}$ given by $$\Lambda(\theta)=(1-\theta^2) -i2\theta$$ will frequently appear in the paper. We record here that as $\theta$ increases from $-\infty$ to $\infty$, the complex argument of $\Lambda(\theta)$ decreases from $\pi$ to $-\pi$. 

All of the other relevant definitions and ambiguities will be discussed in later sections. 
Our first result generalizes the work of Corlette \cite{Corlette1}, Donaldson \cite{Donaldson}, and Labourie \cite{Labourieharm} and may also be regarded as an equivariant extension of \cite[Theorem 3.11]{Wolf}. Naturally, a portion of our analysis resembles that of Wolf.
\begin{thm}\label{first}
Let $M=\Tilde{M}/\Gamma$ be a complete finite volume hyperbolic surface and $(X,g)$ a Hadamard manifold. Let $\rho: \Gamma \to \textrm{Isom}(X,g)$ be a reductive representation. There exists a $\rho$-equivariant harmonic map $f:\Tilde{M}\to X$. If we assume $X$ is $\textrm{CAT}(-1)$, we may construct $f$ so that if $\gamma$ is a peripheral isometry and $\theta\in \mathbb{R}$, the Hopf differential $\Phi$ has the following behaviour at the corresponding cusp
\begin{itemize}
    \item if $\rho(\gamma)$ is parabolic or elliptic, $\Phi$ has a pole of order at most $1$ and
    \item if $\rho(\gamma)$ is hyperbolic, $\Phi$ has a pole of order $2$ with residue $$-\Lambda(\theta)\ell(\rho(\gamma))^2/16\pi^2.$$
\end{itemize}
If $\rho$ does not fix a point on $\partial_\infty X$, then $f$ is the unique harmonic map with these properties. If $\rho$ stabilizes a geodesic, then any other harmonic map with the same asymptotic behaviour differs by a translation along that geodesic.
\end{thm}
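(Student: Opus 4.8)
The plan is to follow the classical exhaustion strategy for equivariant harmonic maps, with the control at the cusps obtained from explicit model maps and a maximum-principle analysis near each puncture, in the spirit of \cite{Wolf}. \textbf{Existence.} Since $X$ is Hadamard, hence contractible, $\rho$-equivariant smooth maps $\tilde M\to X$ exist; fix one, $f_0$, which by suitable model maps at the ends (see below) may be taken harmonic outside a compact core $C\subset M$. Exhaust $M$ by compact subsurfaces-with-boundary $M_1\subset M_2\subset\cdots$, lift to $\rho(\Gamma)$-invariant domains $\tilde M_n$, and on each solve the $\rho$-equivariant Dirichlet problem with boundary data $f_0|_{\partial\tilde M_n}$; this is solvable because $X$ is non-positively curved. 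Reductivity of $\rho$, via Corlette's argument that the equivariant energy is proper modulo the behaviour at infinity, keeps the $f_n$ from escaping to infinity over $C$ and gives uniform $C^0$-bounds there; applying the maximum principle to the subharmonic function $d(f_n,f_0)$ on the ends --- where both maps are harmonic and the cusp ends are potential-theoretically parabolic --- propagates these to uniform $C^0$-bounds on every compact subset of $\tilde M$. Elliptic estimates upgrade these to $C^\infty_{\mathrm{loc}}$-bounds, so a subsequence converges to a $\rho$-equivariant harmonic map $f:\tilde M\to X$. (Alternatively, run the harmonic map heat flow from $f_0$, using convexity of energy along geodesic homotopies for long-time existence and convergence.)

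\textbf{Model maps and prescribed asymptotics.} Assume now $X$ is $\CAT(-1)$. Identify a neighbourhood of a cusp with $\{z=x+iy:\,y>c\}/(z\sim z+1)$ with the hyperbolic metric $y^{-2}|dz|^2$, and let $w$ be the coordinate vanishing at the cusp, so $y^{-2}|dz|^2\sim|w|^{-2}(\log|w|)^{-2}|dw|^2$. For a peripheral $\gamma$ with $\rho(\gamma)$ hyperbolic, parametrise the axis $\sigma$ of $\rho(\gamma)$ by arclength with $\rho(\gamma)\,\sigma(s)=\sigma(s+\ell)$, $\ell=\ell(\rho(\gamma))$, and set $f_\gamma(z)=\sigma(\ell x+\ell\theta\,y)$: this is equivariant near the cusp and harmonic (a geodesic composed with a harmonic function), and using the identity $\Lambda(\theta)=(1-i\theta)^2$ a direct computation of $\langle\partial_z f_\gamma,\partial_z f_\gamma\rangle\,dz^2$ shows its Hopf differential has a pole of order two at $w=0$ with residue $\Lambda(\theta)\ell^2/4$ in the normalisation of the theorem; $\theta$ is the twisting slope of the model. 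If $\rho(\gamma)$ is elliptic take $f_\gamma$ constant equal to the fixed point; if $\rho(\gamma)$ is parabolic with fixed point $\xi\in\partial_\infty X$ take $f_\gamma$ a low-energy map converging to $\xi$ along the gradient flow of a Busemann function, with Hopf differential of at most a simple pole. Patch the $f_\gamma$ to an arbitrary smooth equivariant map on $C$ via an equivariant partition of unity to obtain $f_0$: smooth, $\rho$-equivariant, harmonic off $C$, with the prescribed Hopf-differential behaviour at every cusp. Running the previous step with this $f_0$ produces the desired harmonic map $f$.

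\textbf{$f$ inherits the asymptotics (the main point).} Fix a cusp neighbourhood $U$. The function $d(f,f_0)$ descends to $M$ and is subharmonic on $U$ (both maps harmonic there, $X$ non-positively curved); since the end is parabolic, a barrier/maximum-principle argument bounds it on $U$ by its boundary values, so $f$ stays a bounded distance from the model. In the elliptic and parabolic cases this, with interior energy estimates and $\operatorname{vol}(U)<\infty$, gives $f$ bounded energy density near the cusp, so $\|\Phi_f\|$ is bounded in the hyperbolic metric and $\Phi_f$ has at most a simple pole. In the hyperbolic case one refines the estimate --- via a Bochner/Weitzenb\"ock inequality for $\|\Phi_f\|$ together with the cusp-metric asymptotics above --- to show that $\Phi_f-\Phi_{f_0}$ is bounded in the hyperbolic metric near the cusp, hence has at most a simple pole; therefore $\Phi_f$ has a pole of order exactly two with the same residue $\Lambda(\theta)\ell(\rho(\gamma))^2/4$ as $\Phi_{f_0}$. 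This is the equivariant counterpart of the near-puncture analysis in \cite[Theorem~3.11]{Wolf} and the main obstacle: the maximum principle yields only a crude distance bound, and upgrading it to the exact pole order and residue requires the quantitative Bochner estimates and barriers.

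\textbf{Uniqueness.} Let $f'$ be another $\rho$-equivariant harmonic map with the same Hopf-differential behaviour at the cusps. Equality of residues forces $f$ and $f'$ to be asymptotic near each cusp: in the hyperbolic case the residue determines the axis and the slopes $\ell,\ell\theta$; in the elliptic/parabolic case a finite-energy removable-singularity argument makes both maps extend continuously over the puncture. Hence $h=d(f,f')$, a subharmonic function on $M$, is bounded, and since a complete finite-volume hyperbolic surface is potential-theoretically parabolic, $h$ is constant. If $h\equiv0$ then $f=f'$. If $h\equiv c>0$, the geodesic homotopy from $f$ to $f'$ sweeps out a flat strip (Hartman's theorem on harmonic maps into non-positively curved targets); as a $\CAT(-1)$ space contains no non-degenerate flat strip (\cite{Bridson}), the images of $f$ and $f'$ lie on a common geodesic $\sigma$, which is then $\rho$-invariant, and $f'$ is a translate of $f$ along $\sigma$. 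In particular, if $\rho$ fixes no point of $\partial_\infty X$ it stabilises no geodesic, so $c=0$ and $f=f'$.
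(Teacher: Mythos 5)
Your overall strategy---exhaustion by compact subsurfaces, explicit equivariant model maps at the cusps, and parabolicity of $M$ to run maximum-principle arguments---is the same as the paper's, and the existence step is essentially Proposition \ref{existence} plus a Jost--Yau-type treatment of the flat-stabilizing case. The gap is precisely where you flag the ``main obstacle.'' You claim that a Bochner/Weitzenb\"ock inequality for $\|\Phi_f\|$ yields that $\Phi_f-\Phi_{f_0}$ is bounded in the hyperbolic norm near the cusp, hence has at most a simple pole. This is not established and is not the right tool: $\Phi_f-\Phi_{f_0}$ is a holomorphic quadratic differential, not the Hopf differential of any harmonic map, so there is no Bochner formula for it; and the available pointwise estimates (via Proposition \ref{energy} and Cheng's lemma, as in Lemma \ref{pole}) only give that $|\phi_f|$ is $O(1)$ in the Euclidean normalisation near the cusp---each of $|\phi_f|$ and $|\phi_{f_0}|$ is bounded, but their difference being bounded is far weaker than the $O(1/y^2)$ decay you would need to drop the pole order. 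The paper instead proves the residue identity by the subcylinder rescaling argument of Lemmas \ref{key}--\ref{pole}, the translation-comparison lemma, and Proposition \ref{compute}: one shows that $f\circ b_s$ converges in $C^\infty$ (after a translation) to the model $\varphi^\theta$ pulled back to the infinite cylinder, using that $H(f)\le H(h)$ and Wolf's asymptotics force $J(f)\to 0$, hence the limit has rank one and maps onto the core geodesic; $C^1$-convergence of the maps then gives $C^0$-convergence of the Hopf differentials, which suffices to equate residues by a contour integral. This is a genuinely harder and more global argument than a pointwise Bochner bound.

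The same gap reappears in your uniqueness step. You assert that ``equality of residues forces $f$ and $f'$ to be asymptotic near each cusp,'' but matching residues determine the asymptotic speed, not the offset; you need to know both maps converge, along the cusp, to constant-speed parametrisations of the core geodesic that therefore differ by a bounded translation. This is exactly what Lemmas 5.6--5.8 of the paper establish (again via the cylinder-limit machinery and the explicit $\Phi_0$-coordinate computation in Lemma \ref{laststep}), and without it your bounded-distance claim, and hence the parabolicity argument, does not go through. Once bounded distance is granted, your flat-strip/Hartman argument for the conclusion is fine and is essentially Lemma \ref{unique}. In short: the scaffolding is right, the model computations (including $\Lambda(\theta)=(1-i\theta)^2$ and the residue of the twisted geodesic model) are correct, but the two places where the real work happens---pole order/residue of $\Phi_f$ and boundedness of $d(f,f')$---are asserted rather than proved, and the Bochner route you sketch would need to be replaced by the cylinder-limit analysis.
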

Regarding domination, the next theorem is the main result of this paper.

\begin{thm}\label{bigguy}
Let $M=\tilde{M}/\Gamma$ be a complete finite volume hyperbolic orbifold and $(X,g)$ a $\textrm{CAT}(-1)$ Hadamard manifold. Let $\rho:\Gamma\to \textrm{Isom}(X,g)$ be any representation.  There exists a geometrically finite representation $j_M$ dominating $\rho$ in length spectrum. If $\rho$ is reductive, then $j_M$ dominates $\rho$ in the traditional sense. There is a family of convex cocompact Fuchsian representations strictly dominating $j_M$. Given a peripheral isometry $\gamma$,
\begin{itemize}
    \item if $\rho(\gamma)$ is not hyperbolic, then $j_M(\gamma)$ is parabolic and
    \item if $\rho(\gamma)$ is hyperbolic, $j_M(\gamma)$ is hyperbolic with the same translation length.
\end{itemize}
\end{thm}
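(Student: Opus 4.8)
The plan is to run the harmonic‑map argument of Deroin--Tholozan in the finite‑volume non‑compact setting, with Theorem~\ref{first} playing the role of the Corlette--Donaldson--Labourie existence theorem. By the Selberg lemma, $\Gamma$ has a finite‑index torsion‑free subgroup $\Gamma'$, and $M'=\tilde M/\Gamma'$ is a complete finite‑volume hyperbolic surface; applying Theorem~\ref{first} to $\rho|_{\Gamma'}$ and using its uniqueness clause, the resulting harmonic map on $\tilde M=\widetilde{M'}$ is equivariant for all of $\Gamma$, so its Hopf differential descends to $M$ and the rest of the construction can be run $\Gamma$‑equivariantly. If $\rho$ fixes a unique point of $\partial_\infty X$ its image is elementary parabolic and we argue by hand, obtaining length‑spectrum domination (this is the one case in which that is all we claim); otherwise $\rho$ is reductive — it may still stabilize a geodesic, which Theorem~\ref{first} accommodates — and we proceed as follows. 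Fix the conformal structure underlying the complete hyperbolic metric $\sigma$ on $M$ and apply Theorem~\ref{first}: we obtain a $\rho$‑equivariant harmonic map $f:\tilde M\to X$ whose Hopf differential $\Phi$ is a meromorphic quadratic differential on $M$ with a pole of order at most $1$ at each cusp where $\rho$ is parabolic or elliptic and a double pole of residue $\Lambda(\theta_\gamma)\ell(\rho(\gamma))^2/4$, for freely chosen $\theta_\gamma\in\R$, at each cusp where $\rho(\gamma)$ is hyperbolic.

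Next comes the intermediate result: a non‑compact analogue of Wolf's parametrization of Teichm{\"u}ller space by holomorphic quadratic differentials. The plan is to show that, for the same underlying conformal structure and a suitable choice of the $\theta_\gamma$, there is a complete geometrically finite hyperbolic structure $h$ on $M$, with holonomy $j_M$, together with an identity‑homotopic harmonic diffeomorphism $g:\tilde M\to\widetilde{(M,h)}$ whose Hopf differential is exactly $\Phi$; indeed one expects the Hopf‑differential map to identify the space of such hyperbolic structures with the space of admissible polar data. The polar part of $\Phi$ then dictates the ends of $(M,h)$: a pole of order at most $1$ produces a cusp, so $j_M(\gamma)$ is parabolic, while a double pole of residue $\Lambda(\theta_\gamma)\ell(\rho(\gamma))^2/4$ produces a funnel whose boundary geodesic has length $\ell(\rho(\gamma))$, so $j_M(\gamma)$ is hyperbolic with the same translation length; this is where the function $\Lambda$, with its argument sweeping $(-\pi,\pi)$, enters. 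This step is carried out by the usual continuity / implicit‑function method for the harmonic map equation, now over a non‑compact surface.

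With $f$ and $g$ in hand, I would compare them. They are harmonic maps out of the same Riemann surface with the \emph{same} Hopf differential, so their pulled‑back metrics have the same principal directions and $\mathcal H_f\mathcal L_f=|\Phi|^2\sigma^{-2}=\mathcal H_g\mathcal L_g$, where $\mathcal H,\mathcal L$ denote holomorphic and antiholomorphic energy densities. Sampson's Bochner formula gives $\Delta_\sigma\log\mathcal H_g=-2+2\mathcal H_g-2\mathcal L_g$ exactly, while, since $X$ is $\textrm{CAT}(-1)$ — and modulo the technical check that $f$ has nonnegative Jacobian — one gets $\Delta_\sigma\log\mathcal H_f\ge -2+2\mathcal H_f-2\mathcal L_f$ wherever $\mathcal H_f>0$; that is, $\log\mathcal H_f$ is a subsolution of the semilinear equation solved by $\log\mathcal H_g$, whose nonlinearity $w\mapsto -2+2e^{w}-2|\Phi|^2\sigma^{-2}e^{-w}$ is increasing. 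Because the two Hopf differentials agree not merely in order but in residue at every cusp, $\log\mathcal H_f-\log\mathcal H_g$ decays there, so the maximum principle on the finite‑volume surface $M$ yields $\mathcal H_f\le\mathcal H_g$ pointwise. Combined with $\mathcal H_f\mathcal L_f=\mathcal H_g\mathcal L_g$ and positivity of the Jacobians, the singular values of $df$ are pointwise dominated by those of $dg$, so $\psi:=f\circ g^{-1}:\widetilde{(M,h)}\to X$ is a $1$‑Lipschitz $(j_M,\rho)$‑equivariant map. Hence $j_M$ dominates $\rho$; if $\rho$ fixes no point of $\partial_\infty X$ then $f$ is genuinely unique and $\psi$ gives domination in the traditional sense, and in all cases length‑spectrum domination follows, with the peripheral statements immediate from the construction of $(M,h)$.

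Finally, for the family of convex cocompact Fuchsian representations strictly dominating $j_M$: open each cusp of $(M,h)$ into a funnel with a short geodesic boundary and lengthen the existing funnel boundaries slightly. For small parameters the resulting convex cocompact structures admit a $(1-\epsilon)$‑Lipschitz equivariant map onto $(M,h)$ — an explicit comparison in the thin parts together with a Thurston stretch map on the compact core — hence strictly dominate $j_M$; varying these parameters, the conformal structure on $M$, and the residues $\theta_\gamma$ produces the asserted family. I expect the main obstacle to be precisely what separates this from the closed‑surface case: to make the maximum‑principle comparison go through over a non‑compact domain one must control the exact residues of the two Hopf differentials at the cusps and extract from Theorem~\ref{first} and from the non‑compact Wolf parametrization asymptotics for $\mathcal H_f$ and $\mathcal H_g$ sharp enough to see that their difference decays and that $f$ keeps nonnegative Jacobian near the cusps — without this the comparison principle has nothing to act on. Establishing the non‑compact parametrization by quadratic differentials with prescribed polar data is a close second.
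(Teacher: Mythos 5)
Your architecture is essentially the paper's: Selberg reduction, reduce non‑reductive $\rho$ to a geodesic‑stabilizing $\rho^{\mathrm{red}}$ with the same length spectrum, pick a $\rho$‑equivariant harmonic map $f$ from Theorem~\ref{first}, use the non‑compact Wolf parametrization (Theorem~\ref{frick}) to find a geometrically finite hyperbolic structure with an identity‑homotopic harmonic diffeomorphism $h$ sharing the Hopf differential $\Phi$, compare energy densities via Bochner, and conclude that $f\circ h^{-1}$ is $1$‑Lipschitz and $(j_M,\rho)$‑equivariant. The residue computation $\Lambda(\theta)\ell(\rho(\gamma))^2/4$ and the matching of peripheral behaviour are also as in the paper. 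That much is right and in line with Section~6.3.

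Where you differ — and where the gaps lie — is in two places. First, the energy comparison. You run the maximum principle on the \emph{quotient} $M$, and you yourself flag that this forces you to prove that $\log\mathcal H_f - \log\mathcal H_h$ decays at the cusps; you do not establish that decay, and it is not automatic. The paper avoids this entirely: Proposition~\ref{energy}/Lemma~\ref{betterenergy} compares an arbitrary harmonic $f:\mathbb H\to X$ against Wan's universal harmonic diffeomorphism of $\mathbb H$ (with $H\ge 1$ and the same Hopf differential) \emph{on the universal cover}, using the Omori--Yau generalized maximum principle together with a bounded concave reparametrization $F$ and a cutoff near the degeneracy locus of $f^*g$. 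Since $\mathbb H$ has curvature bounded below and the auxiliary function is bounded, Omori--Yau requires only boundedness, not decay at infinity. This is the key technical move that makes the argument close without cusp asymptotics of $\mathcal H_f$; it also yields the strictness clause (equality only if $f$ is totally geodesic), which is used later. Your $M$‑based maximum principle is a genuinely different route and, as stated, incomplete.

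Second, the strict domination of $j_M$. ``Open each cusp into a funnel with a short boundary and lengthen the existing funnel boundaries slightly, together with a Thurston stretch map on the compact core'' is not enough: increasing boundary lengths does not by itself uniformly lengthen interior closed geodesics, and Thurston stretch maps (along a maximal lamination) are a different device with the opposite effect in mind. The paper instead uses \emph{strip deformations}: one first inserts strips to turn any remaining cusps of $\mathbb H/j_M(\Gamma)$ into funnels (giving a convex cocompact representation dominating $j_M$), and then performs strip deformations along a collection of pairwise disjoint geodesic arcs that \emph{decomposes the surface into disks}. Thurston's observation, proved in detail by Papadopoulos--Th\'eret and made effective by Danciger--Gu\'eritaud--Kassel in~\cite{margulis}, is that exactly this disk‑decomposing condition guarantees a uniform strict lengthening of the entire closed length spectrum, hence a $(1-\epsilon)$‑Lipschitz equivariant map. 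Without the filling condition your construction does not produce strict domination; this is a missing idea, not just a gap in exposition.

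A smaller point: you do not mention the precomposition with the $(j_M,j_M)$‑equivariant nearest‑point retraction of $\mathbb H$ onto the preimage of the convex core, which the paper needs to get a globally defined $1$‑Lipschitz map from all of $\mathbb H$ rather than only from the convex core. And for the elliptic‑monodromy case the paper has a separate argument (Section~6.2) showing $e(f)\to 0$ into those cusps, which is what makes $j_M(\gamma)$ parabolic there; you fold this silently into Theorem~\ref{first}, but it actually requires an extra projection/shrinking argument and finite‑energy uniqueness.
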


 In general $j_M$ will not strictly dominate $\rho$. This will be discussed in detail in Section 6. If $X=\mathbb{H}$ and $\rho$ is Fuchsian with no elliptic monodromy it will follow from the proof that $j_M=\rho$. For holonomy representations of closed surfaces, Thurston observed in \cite[Proposition 2.1]{Thurston} that strict domination contradicts the Gauss-Bonnet theorem and is therefore impossible. 

Most of the proof of Theorem \ref{bigguy} is devoted to constructing $j_M$. To upgrade to a strictly dominating representation we perform a \textit{strip deformation}, a procedure introduced by Thurston \cite{Thurston} and further developed in \cite{margulis}.
 
 Setting $X=\mathbb{H}$ in Theorem \ref{bigguy}, from \cite[Theorem 1.8]{GK} we obtain:

\begin{thm}\label{notmain}
Let $M=\tilde{M}/\Gamma$ be a complete finite volume hyperbolic orbifold and  $\rho:\Gamma\to \textrm{PSL}_2(\mathbb{R})$ any representation. Then there is a Fuchsian representation $j_M$ dominating $\rho$ and a family of convex cocompact representations $(j_M^\alpha)$ strictly dominating $j_M$ such that $$(\rho\times j_M^\alpha)(\Gamma)\subset \textrm{PSL}_2(\mathbb{R})\times \textrm{PSL}_2(\mathbb{R})$$ admits a properly discontinuous action on $\textrm{PSL}_2(\mathbb{R})$ preserving the Lorentz metric of constant curvature $-1$. If $\gamma\in \Gamma$ is elliptic and $\rho(\gamma)$ has smaller order than $\gamma$, then the action is torsion free as well. Consequently there exists a geometrically finite AdS $3$-manifold Seifert-fibered over $\mathbb{H}/j_M^\alpha(\Gamma)$.
\end{thm}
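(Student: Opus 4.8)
The plan is to deduce Theorem~\ref{notmain} from Theorem~\ref{bigguy} together with the properness criterion of Gu\'eritaud--Kassel \cite[Theorem 1.8]{GK}, so that the real content is already packaged in Theorem~\ref{bigguy} and what remains is bookkeeping. First observe that $\mathbb{H}$ is a $\textrm{CAT}(-1)$ Hadamard manifold, so Theorem~\ref{bigguy} applies to $\rho:\Gamma\to\textrm{PSL}_2(\mathbb{R})\subset\textrm{Isom}(\mathbb{H})$ and produces a geometrically finite Fuchsian representation $j_M$ dominating $\rho$ in length spectrum, together with a family $(j_M^\alpha)$ of convex cocompact Fuchsian representations strictly dominating $j_M$, and with the stated behaviour on peripheral elements. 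Since strict (Lipschitz) domination implies strict domination in length spectrum, and length-spectrum dominations compose, each $j_M^\alpha$ strictly dominates $\rho$ in length spectrum: if $\ell(\rho(\gamma))\le\lambda\,\ell(j_M(\gamma))$ with $\lambda\le 1$ and $\ell(j_M(\gamma))\le\mu\,\ell(j_M^\alpha(\gamma))$ with $\mu<1$, then $\ell(\rho(\gamma))\le\lambda\mu\,\ell(j_M^\alpha(\gamma))$ with $\lambda\mu<1$. (Only the length-spectrum statement is needed for what follows; the ``traditional'' domination afforded by Theorem~\ref{bigguy} when $\rho$ fixes no boundary point is not used here.)

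Next I would invoke \cite[Theorem 1.8]{GK} with $G=\textrm{PO}(2,1)_0\simeq\textrm{PSL}_2(\mathbb{R})$: because $j_M^\alpha$ is geometrically finite and strictly dominates $\rho$ in length spectrum, the group $(\rho\times j_M^\alpha)(\Gamma)$ acts properly discontinuously on $G$ by $(g_1,g_2)\cdot x = g_1 x g_2^{-1}$, and the quotient is a geometrically finite Lorentzian $3$-manifold for the bi-invariant metric on $\textrm{PSL}_2(\mathbb{R})$ of constant curvature $-1$, i.e.\ an AdS $3$-manifold. It then remains to discuss freeness of this action and to identify the topology of the quotient.

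For freeness: $j_M^\alpha$ is discrete and faithful, so $\rho\times j_M^\alpha$ is faithful, and the isometry $(\rho(\gamma),j_M^\alpha(\gamma))$ of $G$ has a fixed point if and only if $\rho(\gamma)$ and $j_M^\alpha(\gamma)$ are conjugate in $\textrm{PSL}_2(\mathbb{R})$ (the equation $g_1 x g_2^{-1}=x$ says $g_1=xg_2x^{-1}$). If $\gamma$ has infinite order then, $j_M^\alpha$ being convex cocompact, hence parabolic-free, and faithful, $j_M^\alpha(\gamma)$ is hyperbolic with $\ell(j_M^\alpha(\gamma))>0$; strictness of the length-spectrum domination forces $\ell(\rho(\gamma))<\ell(j_M^\alpha(\gamma))$, so the two elements have different translation lengths and are not conjugate. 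If $\gamma$ is elliptic of order $n$ then $j_M^\alpha(\gamma)$ is elliptic of order exactly $n$, while $\rho(\gamma)$ is elliptic (or trivial) of order dividing $n$; if this order is a proper divisor of $n$---the hypothesis that $\rho(\gamma)$ has smaller order than $\gamma$---the two elements again fail to be conjugate. Hence, under that hypothesis on the elliptic elements, the action is free and the quotient is a manifold. For the Seifert fibration: the projection $\textrm{PSL}_2(\mathbb{R})=T^1\mathbb{H}\to\mathbb{H}$ is equivariant for the left $\textrm{PSL}_2(\mathbb{R})$-action, and, after conjugating the $G\times G$-action by the isometric anti-automorphism $x\mapsto x^{-1}$ of $G$ (which swaps the two factors), we may assume $j_M^\alpha$ acts through the left factor; the projection then descends to a circle fibration of the AdS $3$-manifold over the hyperbolic $2$-orbifold $\mathbb{H}/j_M^\alpha(\Gamma)$, i.e.\ a Seifert fibration, with exceptional fibers over the cone points, and with geometrically finite total space since $j_M^\alpha(\Gamma)$ is convex cocompact.

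The main---indeed essentially the only---obstacle is having Theorem~\ref{bigguy} available; granting it, the points requiring care are merely (i) that strict length-spectrum domination survives the composition $\rho\leftarrow j_M\leftarrow j_M^\alpha$, (ii) applying \cite[Theorem 1.8]{GK} with the correct hypotheses (geometric finiteness of the \emph{dominating} representation and strictness in length spectrum), and (iii) using the conjugacy classification in $\textrm{PSL}_2(\mathbb{R})$---hyperbolic elements are conjugate only when their translation lengths agree, elliptic elements only when their rotation angles agree---to rule out fixed points and thereby obtain the torsion-free conclusion.
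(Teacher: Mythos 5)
Your overall strategy matches the paper's: Theorem \ref{notmain} is deduced by specializing Theorem \ref{bigguy} to $X=\mathbb{H}$ and invoking the Gu\'eritaud--Kassel properness criterion, and your verification that strict length-spectrum domination survives the composition $\rho\leftarrow j_M\leftarrow j_M^\alpha$ and your fixed-point discussion of torsion-freeness via the conjugacy classification in $\textrm{PSL}_2(\mathbb{R})$ are correct fill-ins of details the paper leaves implicit. (One would also want to note, when passing from length-spectrum to Lipschitz domination via Theorem \ref{gkthm}, that the exceptional clause there never arises because $j_M^\alpha$ is convex cocompact and so parabolic-free.)

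The Seifert fibration argument, however, has a genuine gap. The base-point projection $\pi:\textrm{PSL}_2(\mathbb{R})=T^1\mathbb{H}\to\mathbb{H}$, $x\mapsto x\cdot i$, is equivariant for left multiplication but \emph{not} invariant under right multiplication: $\pi(xh^{-1})=xh^{-1}\cdot i$ differs from $\pi(x)=x\cdot i$ unless $h$ stabilizes $i$. After your conjugation by inversion, the $\Gamma$-action on $\textrm{PSL}_2(\mathbb{R})$ is $x\mapsto j_M^\alpha(\gamma)\,x\,\rho(\gamma)^{-1}$, and $\pi$ intertwines this with the $j_M^\alpha$-action on $\mathbb{H}$ only if $\rho(\Gamma)\subset\textrm{PSO}(2)$, which of course fails in general; so $\pi$ does not descend to the quotient, and the circle bundle structure is not established by this argument. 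The standard repair uses a $(j_M^\alpha,\rho)$-equivariant map $f:\mathbb{H}\to\mathbb{H}$ with $\textrm{Lip}(f)<1$, which exists precisely because $C(j_M^\alpha,\rho)<1$: define $\pi(x)$ to be the unique fixed point of the strict contraction $z\mapsto x\cdot f(z)$. A direct computation using $(j_M^\alpha,\rho)$-equivariance of $f$ gives $\pi(j_M^\alpha(\gamma)\,x\,\rho(\gamma)^{-1})=j_M^\alpha(\gamma)\cdot\pi(x)$, so this $\pi$ descends, and each fiber $\pi^{-1}(z)=\{x:\,x\cdot f(z)=z\}$ is a left translate of the stabilizer of $f(z)$ in $\textrm{PSL}_2(\mathbb{R})$, hence a timelike circle. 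This yields the Seifert fibration over $\mathbb{H}/j_M^\alpha(\Gamma)$.
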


Note that if $M$ is a manifold, the torsion condition always holds.

As an intermediate step in the proof of Theorem \ref{bigguy}, we obtain a result of independent interest. Let $M$ be a complete finite volume hyperbolic surface with $n$ punctures and let $T(M,p_1,\dots, p_{d_1}, \ell_{d_1+1},\dots, \ell_{n})$ denote the subspace of the Fricke-Teichm{\"u}ller space of $M$ consisting of holonomies of hyperbolic surfaces with $d_1$ ordered punctures and $d_2$ ordered geodesic boundary components of length $\ell_{d_1+1},\dots, \ell_{d_2}>0$. Let $(\theta_k)_{k=d_1+1}^n\subset \mathbb{R}$ and $P:=(\ell_k,\theta_k)_{k=d_1+1}^{n}$. Denote by $Q(M,P)$ the space of holomorphic quadratic differentials on $M$ with poles of order at most one at the punctures corresponding to cusps and poles of order $2$ with residue $$-\Lambda(\theta_k)\ell_k^2/16\pi^2$$
for each puncture labelled by $\ell_k$. From the results in \cite{Wolf}, for each point in $T(M,p_1,\dots, p_{d_1}, \ell_1,\dots, \ell_{d_2})$, there is a unique homotopic harmonic diffeomorphism $h_f: M\to S$ whose Hopf differential lives in $Q(M,P)$.

\begin{thm} \label{frick} 
Let $M$ be a finite volume hyperbolic surface. The map $$\Psi: T(M,p_1,\dots, p_{d_1}, \ell_1,\dots, \ell_{d_2})\to Q(M,P)$$ given by $[S,f]\mapsto \textrm{Hopf}(h_f)$ is a homeomorphism.
\end{thm}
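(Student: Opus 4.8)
The plan is to show that $\Psi$ is continuous, injective, and proper, and then invoke invariance of domain once we know that $T(M,p_1,\dots,p_{d_1},\ell_1,\dots,\ell_{d_2})$ and $Q(M,P)$ are manifolds of the same dimension. First I would record the dimension count: fixing a hyperbolic structure on $M$ with the prescribed cusp widths $(\tau_k)$, the space $Q(M,P)$ of meromorphic quadratic differentials with at worst simple poles at the $d_1$ cusps and double poles with \emph{prescribed} residues at the remaining $d_2$ punctures is an affine space over the vector space of holomorphic quadratic differentials with at worst simple poles at all $n$ punctures, whose dimension by Riemann-Roch is $3g-3+n$ (for $M$ of genus $g$). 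On the other side, $T(M,p_1,\dots,p_{d_1},\ell_1,\dots,\ell_{d_2})$ is a slice of the Fricke-Teichm\"uller space cut out by fixing the $d_2$ boundary lengths $\ell_{d_1+1},\dots,\ell_{d_2}$, hence has dimension $(6g-6+2n) - d_2 = 6g-6+2d_1+d_2$; I will need to reconcile this with the parameter $\theta$-freedom, which effectively doubles the residue data at hyperbolic cusps and accounts for the missing $d_2$ dimensions, so that the domain and target both have real dimension $6g-6+2d_1+2d_2$. (I would state this carefully since the bookkeeping of which residues are free and which are fixed is exactly where an off-by-a-factor error would hide.)

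Next, continuity of $\Psi$: this is where I lean on Wolf's theory as quoted just before the theorem, namely that $h_f$ depends continuously on the target structure $[S,f]$ in the appropriate topology, together with the standard fact that the Hopf differential depends continuously on the harmonic map in $C^\infty_{\mathrm{loc}}$ plus control at the cusps coming from Theorem \ref{first}. For injectivity, suppose $\Psi([S_1,f_1]) = \Psi([S_2,f_2]) = \Phi$. Then $h_{f_1}$ and $h_{f_2}$ are both harmonic diffeomorphisms from $M$ with the same Hopf differential $\Phi$ and the same prescribed asymptotics at the cusps; the uniqueness clause of Theorem \ref{first} (there is no fixed point on $\partial_\infty X$ here because the target is itself a hyperbolic surface onto which $\Gamma$ acts with full-rank image, and we are in the diffeomorphism case) forces $h_{f_1}$ and $h_{f_2}$ to be compatible, from which $S_1 \cong S_2$ via a map homotopic to $f_2 \circ f_1^{-1}$, i.e. $[S_1,f_1] = [S_2,f_2]$. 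The one subtlety is handling the elliptic/parabolic cusps where the Hopf differential only has a simple pole and the residue is not pinned down; there one argues that the simple-pole coefficient of $\Phi$ still determines the harmonic map uniquely by the uniqueness statement, since that statement does not require the residue data to be prescribed in that case.

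The main obstacle is properness of $\Psi$: I need to show that if $[S_n,f_n]$ leaves every compact set in the domain, then $\Psi([S_n,f_n])$ leaves every compact set in $Q(M,P)$. Degeneration in $T(M,p_1,\dots,p_{d_1},\ell_1,\dots,\ell_{d_2})$ means some simple closed geodesic on $S_n$ either pinches (length $\to 0$) or blows up (length $\to \infty$) while the boundary lengths stay fixed. The standard mechanism, going back to Wolf's compactness argument, is that the $L^1$-norm (or a weighted version adapted to the cusps) of $\mathrm{Hopf}(h_{f_n})$ is comparable to the energy, which in turn blows up under such degeneration; more precisely one uses that the holomorphic energy density and the Hopf differential are linked by $\|\Phi\|_{L^1} \asymp E(h_f) - \mathrm{Area}(S)$ and that the area term is fixed (it is $2\pi|\chi|$ by Gauss-Bonnet for the fixed topological type) while the energy diverges. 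The delicate point is that the energy is infinite here, so I must work with the \emph{renormalized} energy used in the proof of Theorem \ref{first}, check that it still controls a suitable norm on $Q(M,P)$ modulo the fixed residue data, and verify that the renormalization constants (which depend on the $\tau_k$, $\ell_k$, $\theta_k$) stay bounded along the sequence — these are fixed by hypothesis, so this should go through, but it is the step that requires the most care. Once continuity, injectivity, and properness are in hand, $\Psi$ is a proper injective continuous map between manifolds of equal dimension, hence a homeomorphism onto its image, and properness forces the image to be closed; connectedness of $Q(M,P)$ (it is an affine space) together with invariance of domain (the image is also open) gives surjectivity, completing the proof.
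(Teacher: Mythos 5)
Your overall plan (continuity + injectivity + properness, then invariance of domain) is the same as the paper's, but there are three concrete problems.

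First, the dimension count. You compute $\dim T(M,\dots) = (6g-6+2n)-d_2 = 6g-6+2d_1+d_2$ and then try to recover the missing $d_2$ dimensions via ``$\theta$-freedom.'' Both steps are off. The slice with fixed boundary lengths already has dimension $6g-6+2n$: the Fricke-Teichm\"uller space with $d_2$ \emph{variable} boundary lengths has dimension $6g-6+2d_1+3d_2$, and fixing those lengths removes $d_2$, leaving $6g-6+2d_1+2d_2 = 6g-6+2n$. On the quadratic differential side, $\theta$ is \emph{fixed} as part of the data $P$, so it contributes zero free parameters; the Riemann-Roch count $6g-6+2(d_1+2d_2)$ drops by $2d_2$ when you pin the full complex residue (modulus and argument) at each order-two pole, again giving $6g-6+2n$. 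The paper's Lemma~4.6 does this carefully, via doubling and Riemann-Roch.

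Second, injectivity. You invoke the uniqueness clause of Theorem~\ref{first}, but that clause is for a \emph{fixed} target representation $\rho$: given $\rho$, there is at most one $\rho$-equivariant harmonic map with the prescribed asymptotics. When $\Psi([S_1,f_1]) = \Psi([S_2,f_2])$, the two harmonic maps $h_{f_1}, h_{f_2}$ are equivariant for \emph{a priori different} Fuchsian targets $j_{S_1}, j_{S_2}$, so Theorem~\ref{first} as stated does not separate them. What is actually needed is the opposite-direction rigidity, due to Wolf: equality of Hopf differentials forces equality of the holomorphic energies $H$ (by a maximum principle applied to the Bochner equation, using the asymptotic control at the cusps), hence equality of pullback metrics, hence $[S_1,f_1]=[S_2,f_2]$. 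The paper simply cites \cite[Section~4]{Wolf} for both continuity and injectivity.

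Third, properness. You correctly identify the obstacle — the energy is infinite — and gesture at a ``renormalized energy'' controlling a norm on $Q(M,P)$, but this is exactly the gap that needs filling, and the renormalization you would need is not specified. The paper avoids it by truncating: it removes cusp neighborhoods to get a compact $M'$ containing all simple closed geodesics, bounds $E_{M'}(h_f)$ by $2\int_{M'}|\Phi| - 2\pi\chi(M)$ via Wolf's energy-area inequality plus Gauss-Bonnet (which is finite), then uses a Courant-Lebesgue estimate to bound lengths of simple closed geodesics in the target from above, the collar lemma to bound them from below, and finally embeds into a closed-surface Teichm\"uller space by doubling and appeals to a finite collection of length functions giving a proper parametrization. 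That truncation step is what replaces your unspecified renormalization, and without it (or an equivalent) the properness argument is not complete.
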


We expect the above result is known to experts, but could not find a proof in the literature. Hence we supply our own. The parametrization of the Teichm{\"u}ller space of a closed surface by holomorphic quadratic differentials goes back to Sampson, Schoen-Yau, and Wolf (see \cite{wolf1} for the full result). The case of Teichm{\"u}ller spaces of punctured surfaces, corresponding to differentials with a pole of order at most $1$, was completed by Lohkamp \cite{Lohkamp}.  In \cite{Wild}, Gupta parametrized \textit{wild Teichm{\"u}ller spaces} by certain equivalence classes of holomorphic differentials with poles of order at least $3$. Theorem \ref{frick} thus completes a description of the space of meromorphic quadratic differentials over a Riemann surface in terms of harmonic diffeomorphisms.

 In \cite{AL}, Alessandrini and Li explore the domination phenomena via Higgs bundles and geometric structures. Building on work from Baraglia's thesis \cite{bar}, they observe a link between AdS structures and maximal immersions into Grassmanians of timelike planes. Utilizing this machinery, we observe new phenomena that do not occur for closed surfaces.

\begin{cor} \label{minimal}
Let $M=\tilde{M}/\Gamma$ be a complete finite volume hyperbolic surface and let $\rho:\Gamma\to \textrm{SL}_2(\mathbb{R})$ be any non-Fuchsian reductive representation. Then there exists a Fuchsian representation $j_M$ and a circle bundle $p:U\to M$ that admits an $AdS^3$ geometric structure with holonomy $(j_M\otimes \rho)_*\circ p_*$, but such that the projectivization of $(j_M,\rho)$ does not act properly discontinuously on $\textrm{PSL}_2(\mathbb{R})$. Moreover, there exists a $(j_M,\rho)$-equivariant maximally immersed surface in the Grassmanian of timelike planes in $\mathbb{R}^{2,2}$.
\end{cor}

We end this subsection by presenting a quick corollary of Theorem \ref{bigguy}, unrelated to the rest of the paper. When $X$ is a CAT($-1$) Hadamard manifold and $\rho:\Gamma \to \textrm{Isom}(X,g)$ is geometrically finite, the \textit{limit set} of $\rho(\Gamma)$ is the set of limit points of $\Gamma \cdot z$ in $\partial_\infty X$ for a fixed point $z$ in $X$. It is a standard exercise to confirm that this does not depend on the point $z$. When $X=\textrm{PSL}_2(\mathbb{R})$ and $\rho$ is Fuchsian, the limit set is either the full circle $\partial_\infty \mathbb{H}$ or a Cantor set. The \textit{critical exponent} $\delta(\rho)$ is the smallest constant $s$ such that the Poincar{\'e} series $$\sum_{\gamma \in \Gamma} e^{-sd(z,\rho(\gamma)\cdot z)}$$ converges, and it coincides with the Hausdorff dimension of the limit set  (see \cite{MCo} for a proof). The analogue of the following result is known for closed surfaces and is observed in \cite{DT}, but to the author's knowledge it is new in our context.

\begin{cor} 
Let $M=\tilde{M}/\Gamma$ be a complete finite volume hyperbolic orbifold and $(X,g)$ a $\textrm{CAT}(-1)$ Hadamard manifold. Let $\rho:\Gamma\to \textrm{Isom}(X,g)$ be a geometrically finite representation. There is a Fuchsian representation $j_M$ such that the Hausdorff dimension of the limit set of $\rho(\Gamma)$ is bounded below by that of $j_M$. $j_M$ has the following property around a peripheral $\gamma$:
\begin{itemize}
    \item if $\rho(\gamma)$ is not hyperbolic, then $j_M(\gamma)$ is parabolic and
    \item if $\rho(\gamma)$ is hyperbolic, then $j_M(\gamma)$ is hyperbolic with $\ell(j_M(\gamma))=\ell(\rho(\gamma))$.
\end{itemize}
\end{cor}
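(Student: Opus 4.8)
\textbf{Proof proposal for Corollary \ref{main}.}

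The plan is to deduce this directly from Theorem \ref{bigguy} by exploiting the monotonicity of critical exponents under length-spectrum domination. Let $j_M$ be the geometrically finite (in fact Fuchsian) representation produced by Theorem \ref{bigguy}, so that $\ell(\rho(\gamma))\le \ell(j_M(\gamma))$ for all $\gamma\in\Gamma$; the peripheral behaviour in the bulleted list is exactly the peripheral behaviour already recorded in Theorem \ref{bigguy}, so those two bullet points require no extra work. The remaining content is the inequality $\dim_H(\text{limit set of }\rho(\Gamma))\le \dim_H(\text{limit set of }j_M(\Gamma))$.

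First I would reduce both Hausdorff dimensions to critical exponents. For $j_M$ this is immediate: $j_M$ is a geometrically finite (convex cocompact after the strip deformation, but in any case geometrically finite) representation into $\Isom(\mathbb{H})$, so $\dim_H$ of its limit set equals $\delta(j_M)$ by the cited result of \cite{french}. For $\rho$ one must be slightly more careful, since $\rho$ need not be geometrically finite and $X$ is a general $\CAT(-1)$ Hadamard manifold. However, the general upper bound $\dim_H(\text{limit set of }\rho(\Gamma))\le \delta(\rho)$ holds for \emph{any} discrete group acting on a $\CAT(-1)$ space with pinched negative curvature bounded below: one covers the limit set by shadows of balls and estimates using the Poincar\'e series, exactly as in the Bishop--Jones argument. (If $X$ has curvature bounded below one may quote this directly; in the stated generality one restricts to the case relevant to the applications, or notes that the upper bound half of Bishop--Jones only uses the $\CAT(-1)$ inequality.) So it suffices to prove $\delta(\rho)\le \delta(j_M)$.

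Next I would compare the two Poincar\'e series termwise. Fix a basepoint $z_1\in X$ and $z_2\in\mathbb{H}$. Length-spectrum domination does not immediately compare $d_X(z_1,\rho(\gamma)z_1)$ with $d_{\mathbb{H}}(z_2,j_M(\gamma)z_2)$, so instead I would use the $1$-Lipschitz equivariant map. When $\rho$ does not fix a point on $\partial_\infty X$, Theorem \ref{bigguy} gives a $1$-Lipschitz $(j_M,\rho)$-equivariant map $F:\mathbb{H}\to X$ (domination in the traditional sense), and then for every $\gamma$,
\[
d_X\bigl(F(z_2),\rho(\gamma)F(z_2)\bigr)=d_X\bigl(F(z_2),F(j_M(\gamma)z_2)\bigr)\le d_{\mathbb{H}}\bigl(z_2,j_M(\gamma)z_2\bigr).
\]
Taking $z_1=F(z_2)$ this shows each term of the Poincar\'e series for $\rho$ at $z_1$ is $\ge$ the corresponding term for $j_M$ at $z_2$ is reversed in the wrong direction — so instead I would run the comparison through $j_M$ directly: the inequality above shows $\sum_\gamma e^{-s\,d_X(z_1,\rho(\gamma)z_1)}\ge\sum_\gamma e^{-s\,d_{\mathbb H}(z_2,j_M(\gamma)z_2)}$ for $s\ge 0$, which would give $\delta(\rho)\ge\delta(j_M)$ — the wrong direction. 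The correct move is to use the \emph{other} direction of domination that is genuinely available: a length-spectrum (or Lipschitz) bound $\ell(\rho(\gamma))\le\ell(j_M(\gamma))$ \emph{does} control critical exponents the right way when $j_M$ is convex cocompact, because for a convex cocompact (or geometrically finite, using the non-peripheral elements) group the critical exponent can be computed from translation lengths via an orbit-counting/thermodynamic-formalism argument: $\delta(j_M)=\inf\{s:\sum_{[\gamma]}e^{-s\ell(j_M(\gamma))}<\infty\}$ over primitive conjugacy classes, and likewise $\delta(\rho)\le\inf\{s:\sum_{[\gamma]}e^{-s\ell(\rho(\gamma))}<\infty\}$, so $\ell(\rho(\gamma))\le\ell(j_M(\gamma))$ forces convergence of the $\rho$-series whenever the $j_M$-series converges, hence $\delta(\rho)\le\delta(j_M)$.

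I expect the main obstacle to be making the last step rigorous in the generality stated: identifying $\dim_H$ of the $\rho$-limit set with (a quantity bounded by) an exponent of convergence of $\sum e^{-s\ell(\rho(\gamma))}$ when $\rho(\Gamma)$ is an arbitrary — possibly not discrete, not geometrically finite — subgroup of $\Isom(X,g)$. The cleanest route is to invoke the Deroin--Tholozan-style observation referenced in the statement (``known for closed surfaces''), whose proof I would adapt: combine the $1$-Lipschitz equivariant map from Theorem \ref{bigguy} with the comparison $\delta(j_M)=\dim_H(\text{limit set of }j_M)$ and the universal upper bound $\dim_H(\text{limit set of }\rho)\le\delta(\rho)$, then close the loop with the orbit-counting comparison above. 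For the degenerate case where $\rho$ fixes a point on $\partial_\infty X$, the limit set of $\rho(\Gamma)$ is a single point (or contained in a horosphere-fixed configuration) of Hausdorff dimension $0$, so the inequality is trivial and only the peripheral bullets, already supplied by Theorem \ref{bigguy}, need to be stated.
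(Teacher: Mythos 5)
Your reduction of the two bullet points to Theorem \ref{bigguy}, and of the Hausdorff dimension of the limit set of $j_M$ to the critical exponent $\delta(j_M)$, is fine; the problem is the pivotal step at the end. The claim that $\ell(\rho(\gamma))\le\ell(j_M(\gamma))$ ``forces convergence of the $\rho$-series whenever the $j_M$-series converges'' is backwards: smaller translation lengths make the summands $e^{-s\ell(\rho(\gamma))}$ \emph{larger}, so the $\rho$-series dominates the $j_M$-series term by term, convergence propagates from the $\rho$-series to the $j_M$-series and not the other way, and the comparison yields $\delta(\rho)\ge\delta(j_M)$. This is exactly the inequality you had already derived (and correctly flagged as ``the wrong direction'') from the $1$-Lipschitz $(j_M,\rho)$-equivariant map two paragraphs earlier. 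There is no third route: any form of domination of $\rho$ by $j_M$, whether Lipschitz or length-spectrum, shrinks the orbital distances and translation lengths of $\rho$, hence enlarges its Poincar{\'e} series, hence can only bound $\delta(\rho)$ from \emph{below} by $\delta(j_M)$. The second half of your argument therefore does not repair the first; it silently reverses an implication.

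The inequality $\delta(j_M)\le\delta(\rho)$ is in fact the only conclusion that Theorem \ref{bigguy} can deliver, and it is the one matching the closed-surface analogue in \cite{DT}, where domination by a cocompact Fuchsian representation gives the \emph{lower} bound $\delta(\rho)\ge 1$. Read literally, the displayed inequality (Hausdorff dimension of the limit set of $\rho(\Gamma)$ bounded above by that of $j_M(\Gamma)$) does not follow and fails in general: for $\Gamma$ free and $X=\mathbb{H}^3$ one may take $\rho$ convex cocompact with limit set of Hausdorff dimension greater than $1$, whereas $j_M$ is Fuchsian and its limit set has dimension at most $1$. The statement should be read with the roles of $\rho$ and $j_M$ exchanged, i.e.\ as the bound $\delta(\rho)\ge\delta(j_M)=\dim_H(\text{limit set of }j_M(\Gamma))$, the right-hand side being computable from the peripheral data as in the pair-of-pants example. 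With that reading, your first computation via the $1$-Lipschitz equivariant map (together with $\delta(j_M)=\dim_H$ for the geometrically finite $j_M$) is already a complete proof, and the detour through conjugacy-class Poincar{\'e} series is unnecessary.
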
 

The Hausdorff dimension can be estimated and sometimes fully understood from the monodromy around the punctures. For instance, if $M$ is a pair of pants and $\rho$ takes the cuffs to isometries with lengths $a,b,c>0$, then the Hausdorff dimension of the limit set of $j_M$ occurs as a zero of a certain \textit{Selberg zeta function} $$Z_{a,b,c}(s)=\prod_{\gamma\in\Gamma}\prod_{m=0}^\infty \Big (1-e^{-(s+m)\ell(\gamma)} \Big ).$$ The map $\gamma\mapsto \ell(\gamma)$ is determined entirely by $a,b,c$. These zeroes can be computed efficiently (see \cite{pants} for details).
\end{subsection}
\begin{subsection}{Anti-de Sitter 3-manifolds}
Let $\mathcal{B}$ be a non-degenerate symmetric billinear form on $\mathbb{R}^{n}$ with signature $(n-2,2)$. The \textit{Anti-de Sitter space} is $$\textrm{AdS}^n = \{x\in \mathbb{R}^n: \mathcal{B}(x,x)=-1 \}/\{\pm 1\}.$$ The tangent space to any $[x]\in \textrm{AdS}^n$ identifies with the $\mathcal{B}$-orthogonal complement of the subspace generated by $x$. The restriction of $\mathcal{B}$ to each tangent space is a billinear form of signature $(n-1,1)$. This endows $\textrm{AdS}^n$ with the structure of a Lorentzian manifold with constant sectional curvature $-1$ across all non-degenerate $2$-planes. A tangent vector $v\in T_x \textrm{AdS}^n$ is \textit{timelike}, \textit{spacelike}, or \textit{lightlike} if $\mathcal{B}(v,v)<0$, $\mathcal{B}(v,v)>0$, or $\mathcal{B}(v,v)=0$ respectively. A complete \textit{anti-de Sitter n-manifold} is a quotient of $\textrm{AdS}^n$ by a properly discontinuous subgroup of Lorentzian isometries. AdS manifolds play a deep role in mathematical physics and are also of independent mathematical interest.

In dimension $n=3$, $\textrm{AdS}^3$ identifies isometrically with the Lie group $\textrm{PSL}_2(\mathbb{R})$ equipped with the Lorentz metric induced by (a constant multiple of) the Killing form on the Lie algebra. The group of orientation and time-preserving isometries is $\textrm{PSL}_2(\mathbb{R})\times \textrm{PSL}_2(\mathbb{R})$ acting via left and right multiplication: $$((g,h), x)\mapsto (g,h)\cdot x = gxh^{-1}.$$ The data of a subgroup of isometries is equivalent to that of two representations into $\textrm{PSL}_2(\mathbb{R})$. AdS $3$-manifolds are studied in \cite{Klingler},  \cite{KR}, \cite{Sa}, \cite{Ka}, \cite{DT}, \cite{GKW}, \cite{margulis}, and \cite{DGK3} among other sources.
Scott \cite{Scott} proved that many closed $3$-manifolds can be modelled on Thurston's $6^{th}$ geometry. Such manifolds also admit an $\textrm{AdS}^3$ structure that is called standard. Goldman found more examples by deforming the standard structures \cite{nonstandard}. Salein observed in his thesis that domination is a sufficient criterion for proper discontinuity. By a complex analytic argument he produced more examples and showed that for a closed surface group the projection from $$\textrm{Hom}
(\Gamma,\textrm{PSL}_2(\mathbb{R}))^2\supset \{\textrm{properly discontinuous pairs }(j,\rho)\}$$ to $\textrm{Rep}^{nf}(\Gamma, \textrm{PSL}_2(\mathbb{R}))$ intersects every non-extremal Euler class. Kassel proved domination is necessary \cite{Ka} and along with Gu{\'e}ritaud extended the result to a higher dimensional setting in \cite{GK}. These two have the definitive result.

\begin{thm} \label{main}
(Gu{\'e}ritaud, Kassel) Let $j,\rho: \Gamma\to PSO(n,1)$ be representations of a discrete group such that $j$ is geometrically finite. Then $(j,\rho)$ acts properly discontinuous on $PSO(n,1)$ by left and right multiplication if and only if the infimum of Lipschitz constants of $(j,\rho)$-equivariant maps is $<1$.
\end{thm}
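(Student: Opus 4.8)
This is Gu\'eritaud--Kassel's properness criterion; here is the plan, split along the two implications. Write $G=PSO(n,1)$, identified with $\mathrm{Isom}^+(\mathbb H^n)$, fix a basepoint $o\in\mathbb H^n$, and set $\mu(g)=d(o,g\cdot o)$. The action under consideration is $\gamma\cdot g=j(\gamma)\,g\,\rho(\gamma)^{-1}$, and a $(j,\rho)$-equivariant map means a map $f:\mathbb H^n\to\mathbb H^n$ with $f\circ j(\gamma)=\rho(\gamma)\circ f$. The first step is to translate properness into a coarse comparison of Cartan projections. Viewing $\Gamma$ as acting on the homogeneous space $(G\times G)/\Delta G\cong G$ through $j\times\rho$, the Benoist--Kobayashi properness criterion says that the action is properly discontinuous if and only if $\{\gamma\in\Gamma:|\mu(j(\gamma))-\mu(\rho(\gamma))|\le R\}$ is finite for every $R>0$. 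Since $j$ is geometrically finite it is discrete with finite kernel, so $\mu(j(\gamma))\to\infty$, and (in the configuration relevant here, with the $j$-projection the larger one) the criterion becomes
\[
\mu(j(\gamma))-\mu(\rho(\gamma))\ \longrightarrow\ +\infty\qquad(\gamma\to\infty).
\]
It therefore suffices to show that this divergence is equivalent to the existence of a $(j,\rho)$-equivariant $C$-Lipschitz map with $C<1$.

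The reverse implication (Lipschitz $<1$ implies properness) is just the triangle inequality. If $f$ is $(j,\rho)$-equivariant and $C$-Lipschitz, then $d(f(o),\rho(\gamma)f(o))=d(f(o),f(j(\gamma)o))\le C\,\mu(j(\gamma))$, so
\[
\mu(\rho(\gamma))\le 2\,d(o,f(o))+d(f(o),\rho(\gamma)f(o))\le 2\,d(o,f(o))+C\,\mu(j(\gamma)),
\]
and hence $\mu(j(\gamma))-\mu(\rho(\gamma))\ge(1-C)\mu(j(\gamma))-2d(o,f(o))\to+\infty$ when $C<1$; by the first step the action is proper.

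For the forward implication one must manufacture an equivariant map with Lipschitz constant $<1$ out of the divergence $\mu(j(\gamma))-\mu(\rho(\gamma))\to\infty$. I would first produce \emph{some} $(j,\rho)$-equivariant $C$-Lipschitz map with $C$ possibly large: pick a locally finite fundamental domain for the geometrically finite group $j(\Gamma)$, define $f$ crudely there, extend equivariantly, and mollify — the bounded geometry of the thick part together with the explicit model geometry of the cusps and funnels keeps the Lipschitz constant finite. Then let $C_0$ be the infimum of Lipschitz constants over all $(j,\rho)$-equivariant maps. The core of the argument, following Thurston's theory of the Lipschitz (stretch) metric and its extension by Gu\'eritaud--Kassel, is that $C_0$ is detected along geodesics: there is a near-optimal map whose maximally stretched locus is $\Gamma$-invariant, and if $C_0\ge 1$ this locus supports a recurrent geodesic lamination along which $f$ stretches by the factor $C_0$. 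Closing up along the lamination produces a sequence $\gamma_k\to\infty$ with $\ell(\rho(\gamma_k))\ge\ell(j(\gamma_k))-O(1)$, hence $\mu(\rho(\gamma_k))\ge\mu(j(\gamma_k))-O(1)$, contradicting the hypothesis. Therefore $C_0<1$, and any sufficiently optimal map is the one we want.

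The main obstacle is this last step: extracting a Lipschitz-$<1$ equivariant map from the coarse domination of Cartan projections. The Benoist--Kobayashi reduction and the easy implication are essentially formal; the substance is the geometric optimization, showing that the infimal equivariant Lipschitz constant is governed by stretching along closed geodesics or recurrent laminations. Porting Thurston's compact-surface arguments to the noncompact, geometrically finite (and higher-dimensional) setting — in particular controlling the behaviour of the maps in the cusps and funnels, where one cannot rely on a dense supply of short closed geodesics but must instead track sequences escaping to infinity — is the delicate part of the proof.
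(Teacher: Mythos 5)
The paper does not prove this statement: it is quoted from \cite{GK} (their Theorem~1.8) and used as a black box, so there is no in-paper proof to compare against.

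Evaluated on its own, your outline is a faithful sketch of the original Gu\'eritaud--Kassel argument. The Benoist--Kobayashi reduction is the correct first move, and your triangle-inequality proof of the easy implication ($C<1$ implies properness) is correct and complete. For the forward implication you have accurately summarized the strategy --- manufacture some equivariant Lipschitz map, pass to a near-optimal one, extract a $\Gamma$-invariant maximally stretched lamination when $C_0\ge 1$, and close up along it to contradict properness --- and you have correctly flagged where the substance lies, namely in extending Thurston's stretch-locus machinery to the noncompact, geometrically finite, higher-dimensional setting. As written, though, that direction is only a roadmap: none of the key constructions (the initial Lipschitz extension, the optimal map, the maximally stretched locus and its recurrence, the closing-up argument, and especially the control of the stretch in cusps where it can escape to infinity) are actually carried out. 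One smaller issue: the Benoist--Kobayashi criterion gives $|\mu(j(\gamma))-\mu(\rho(\gamma))|\to\infty$ with no fixed sign; the clause \emph{in the configuration relevant here, with the $j$-projection the larger one} sweeps under the rug both the possibility that $\rho$ dominates $j$ (in which case $C(j,\rho)\ge 1$ yet the action is still proper) and the need to show the sign eventually stabilizes. A careful statement and proof of the theorem has to address this asymmetry.
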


 As discussed earlier, the remaining examples of closed AdS $3$-manifolds came from \cite{DT} and \cite{GKW}. In their study of \textit{Margulis spacetimes} in \cite{margulis}, Danciger, Gu{\'e}ritaud, and Kassel parametrized the subspace of representations into $\textrm{PSL}_2(\mathbb{R})$ strictly dominating $\rho$ in the case $\Gamma$ is free and $\rho$ is Fuchsian.

Note that one could similarly have representations $(\rho,j)$ with $j$ geometrically finite and $\rho$ non-Fuchsian that lead to properly discontinuous actions. The resulting AdS $3$-manifolds are isomorphic if we swap $\rho$ and $j$. 
The AdS $3$-manifolds obtained via such a pair $(j,\rho)$ are Seifert-Fibered over $\mathbb{H}/j(\Gamma)$, and each fiber is a timelike geodesic.

\end{subsection}

\begin{subsection}{Outline of paper and strategy of proof}
In the next section we introduce the relevant definitions and notations in the representation theory of discrete groups. We also reduce Theorem \ref{bigguy} to the case where $M$ is a manifold. In Section 3 we discuss harmonic maps and energy. We prove that the energy of an equivariant harmonic map is bounded above by that of a special harmonic diffeomorphism of the disk with the same Hopf differential. As is standard in this field, we argue via an analysis of the Bochner formula. This estimate is a central technical results of this paper, and is instrumental in proving Theorem \ref{bigguy}. 
 
  In Section 4 we prove Theorem \ref{frick} using classical techniques from the theory of harmonic maps. Section 5 is devoted to the proof of Theorem \ref{first}. Infinite energy harmonic maps were constructed for some special cases in \cite{Wolf}, \cite{Simpson}, \cite{JZ}, and \cite{MK}. Consequently, there is nothing truly novel in the proof of the general existence result--it is an amalgamation of known ideas. The real work is done in studying the behaviour and uniqueness of the harmonic maps. We combine the energy estimate from Section 3 with Theorem \ref{frick} to control the energy locally, as well as a distance comparison to a special non-harmonic map to understand the directions in which our map should expand and contract.

In Section $6$ we take an equivariant harmonic map $f$ from Theorem \ref{first} and choose a harmonic diffeomorphism $h$ from $M$ to the convex core of some geometrically finite hyperbolic surface $N$ that has the same Hopf differential as $f$. From our energy estimates $f\circ h^{-1}$ is $1$-Lipschitz and intertwines $\rho$ with the holonomy of $N$.  The compact analogue of this idea was laid out in \cite{DT} and expounded upon in \cite{AL}. We introduce strip deformations to strictly dominate the holonomy of $N$, completing the proof of Theorem \ref{bigguy}. In the final section we briefly discuss geometric structures and maximal immersions into Grassmanians.
\end{subsection}

\begin{subsection}{Other recent work}
 Shortly after a preprint of this paper was posted to the arXiv, Gupta-Su proved the same domination result for representations to $\textrm{PSL}_2(\mathbb{C})$ \cite{GS}. Their proof is different: they straighten the pleated plane determined by the Fock-Goncharov coordinates associated to a framed representation, and then use strip deformations.  
\end{subsection}

\begin{subsection}{Acknowledgements}
It is a pleasure to thank my advisor, Professor Vladimir Markovic, for his support, insight, patience, and guidance. I would also like to thank Qiongling Li and Peter Smillie for their interest and helpful conversations, as well as Fran\c{c}ois Gu{\'e}ritaud for graciously answering some questions over email. Finally, I would like to thank my friend Arian Jadbabaie for helping create Figure 1 on Inkscape.
\end{subsection}

\end{section}

\begin{section}{Representations of discrete groups}
\begin{subsection}{Isometries of Hadamard manifolds}
Throughout, let $(X,g)$ be a $\textrm{CAT}(0)$ Hadamard manifold. We will denote by $\partial_\infty X$ the visual boundary of $X$ endowed with its natural topology (see \cite{Bridson} for details). Isometries of $X$ extend to homeomorphisms of $X\cup \partial_\infty X$. When discussing isometries, we appeal to the standard classification.
\begin{defn}
An isometry $\gamma \in \textrm{Isom}(X,g)$ is
\begin{itemize}
    \item \textit{elliptic} if $\ell_X(\gamma)=0$ and it has a fixed point inside $X$,
    \item \textit{parabolic} if $\ell_X(\gamma)=0$ and it has no fixed points inside $X$, or
    \item \textit{hyperbolic} if $\ell_X(g)>0$.
\end{itemize}
\end{defn}
A hyperbolic isometry preserves a unique geodesic axis on which it acts by linear translation. On any point $x$ of this axis, $d(x,\gamma \cdot x)=\ell(\gamma)$.

 Let $\Gamma\subset \textrm{PSL}_2(\mathbb{R})$ be such that $M=\mathbb{H}/\Gamma$ is a complete  hyperbolic orbifold. A \textit{cusp neighbourhood} is a region surrounding a puncture of $M$ such that for some $a,\tau>0$ it identifies isometrically with $$U(\tau):=\{z=x+iy: (x,y)\in [0,\tau]\times [a,\infty)\}/\langle z\mapsto z+\tau\rangle$$ equipped with the hyperbolic metric $|dz|^2/y^2$. An isometry in $\Gamma$ is \textit{peripheral} if upon identifying $\Gamma$ with the fundamental group its conjugacy class corresponds to a curve surrounding a puncture. We will also say such curves are peripheral. A representation $\rho: \Gamma\to \textrm{Isom}(X,g)$ has elliptic, parabolic, or hyperbolic \textit{monodromy} around a puncture if the image of the peripheral conjugacy class is of that type.
 
 \begin{defn}
  The \textit{convex core} $C(M)$ of $M$ is the quotient of the convex hull of the limit set of $\Gamma$ by the action of $\Gamma$.
 \end{defn}
 
It enjoys the property that the inclusion $C(M)\to M$ is a homotopy equivalence. The convex core is the union of a compact orbifold and a number of cusp regions for each puncture in $M$. It may have a number of geodesic boundary components. $M$ can be recovered from $C(M)$ by attaching infinite funnels along these components.
 
 \begin{defn}
  A discrete representation $j : \Gamma\to \textrm{PSL}_2(\mathbb{R})$ is \textit{convex cocompact } (or \textit{geometrically finite}) if the convex core of $j(\Gamma)$ is compact (or finite volume).
 \end{defn}
 
 The representation is convex cocompact precisely if the convex core has no cusps. For an exposition of the general theory of geometrically finite representations in $\textrm{PSO}(n,1)$ we refer the reader to \cite{GK}. It is unique to dimension $n=2$ that $j(\Gamma)$ is finitely generated if and only if $\mathbb{H}/j(\Gamma)$ is geometrically finite.
 
 A subgroup of $\textrm{Isom}(X,g)$ is \textit{parabolic} if all elements have a common fixed point in $\partial_\infty X$. In this paper, a \textit{flat} in $X$ is a subspace isometric to $\mathbb{R}^n$ with its Euclidean metric.
 
 \begin{defn}
   A representation $\rho:\Gamma\to \textrm{Isom}(X,g)$ is \textit{reductive} if $\rho(\Gamma)$ is not contained in a parabolic subgroup or if it stabilizes some totally geodesic flat $Y$.
 \end{defn}
 
 When $M$ is a surface and $X$ is $\textrm{CAT}(-1)$, this condition is equivalent to $\rho$ not fixing a point on the boundary $\partial_\infty X$ or stabilizing a geodesic. In the more special case that $X$ is a rank $1$ symmetric space of non-compact type, so that $\textrm{Isom}(X,g)$ is a linear algebraic group, this is equivalent to the usual notion that the Zariski closure of $\rho(\Gamma)$ is reductive.  
\end{subsection}

\begin{subsection}{Representations and flat bundles}
Let $M=\Tilde{M}/\Gamma$ be a connected manifold. We consider the set of representations into $\textrm{Isom}(X,g)$ modulo conjugation $$\textrm{Hom}(\Gamma, \textrm{Isom}(X,g))/\Gamma.$$ Out of any representation $\rho: \Gamma \to \textrm{Isom}(X,g)$ we can construct a flat fiber bundle $$X_\rho:= X\times_\rho \Tilde{M}:=\{(x,s)\in X\times \Tilde{M}: (x,s)\sim (\gamma x, \rho(\gamma) s)\}.$$ Conjugate representations give rise to isomorphic bundles. Conversely, any flat bundle over $X$ has a well-defined holonomy representation $\Gamma\to \textrm{Isom}(X,g)$. This construction describe a bijection between the space of representations up to conjugacy and the space of flat bundles up to gauge equivalence (see \cite{labourie} for a proof and more details). 

Global sections always exist because $X$ is contractible. Under this correspondence, taking the pullback bundle with respect to the universal covering $\Tilde{M}\to M$ shows that sections of $X_\rho$ are equivalent to $\rho$-equivariant maps from $\Tilde{M}\to X$. We will pass back and forth between these two perspectives.
\end{subsection}
\begin{subsection}{Optimal Lipschitz constants}
Given $\Gamma$ discrete, $\rho:\Gamma\to \textrm{Isom}(X,g)$, and $j:\Gamma\to \textrm{PSL}_2(\mathbb{R})$ geometrically finite, we set $$C(j,\rho):=\inf \textrm{Lip}(f),$$ where the infimum is taken over the family of all $(j,\rho)$-equivariant Lipschitz maps. The theorem below is Theorem 1.8 in \cite{GK}.

\begin{thm} \label{gkthm}
(Gu{\'e}ritaud, Kassel) Let $\Gamma$ be a discrete group and $\rho,j:\Gamma\to \textrm{PSL}_2(\mathbb{R})$ two representations with $j$ geometrically finite. Then $C(j,\rho)<1$ if and only if $$C(j,\rho)':= \sup \frac{\ell(\rho(\gamma))}{\ell(j(\gamma))}<1,$$ unless $\rho$ has exactly one fixed point on $\partial_\infty \mathbb{H}$ and there exists a $\gamma\in \Gamma$ such that $j(\gamma)$ is parabolic and $\rho(\gamma)$ is not elliptic.
\end{thm}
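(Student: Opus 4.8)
The plan is to reconstruct the argument of Gu{\'e}ritaud--Kassel, of which this is Theorem 1.8 in \cite{GK}. One implication is soft and needs no hypothesis on $j$: if $f$ is $(j,\rho)$-equivariant with $\mathrm{Lip}(f)=C$ and $\gamma\in\Gamma$ has $j(\gamma)$ hyperbolic with axis $A$, then picking any $x\in A$,
$$\ell(\rho(\gamma))\le d\big(f(x),\rho(\gamma)f(x)\big)=d\big(f(x),f(j(\gamma)x)\big)\le C\,d(x,j(\gamma)x)=C\,\ell(j(\gamma)),$$
using only that $\ell(\rho(\gamma))=\inf_y d(y,\rho(\gamma)y)$. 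Taking the supremum over such $\gamma$ and then the infimum over $f$ gives $C(j,\rho)'\le C(j,\rho)$, so $C(j,\rho)<1$ forces $C(j,\rho)'<1$, with no exceptions.

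For the converse I would assume $C(j,\rho)'<1$ and construct a $(j,\rho)$-equivariant map of Lipschitz constant $<1$, working on the convex core $N=C(\mathbb{H}/j(\Gamma))$, which splits as a compact core $N_0$ glued along compact collars to finitely many cusp neighbourhoods $U_1,\dots,U_m$, each $U_i$ carrying a peripheral class $\gamma_i$ with $j(\gamma_i)$ parabolic. Step 1: on $N_0$, invoke the theory of maximally stretched geodesic laminations of \cite{GK} --- a generalization of Thurston's work on the asymmetric metric to targets $\rho$ that need not be discrete --- to produce an equivariant map $f_0$ over $N_0$ whose Lipschitz constant equals $\sup \ell(\rho(c))/\ell(j(c))$ over closed geodesics $c\subset N_0$, hence is $\le C(j,\rho)'<1$. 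Step 2: on each $U_i$, build by hand an equivariant map $f_i$ on a horoball neighbourhood with $\mathrm{Lip}(f_i)<1$; whether this is possible is dictated by the pair $(j(\gamma_i),\rho(\gamma_i))$ --- if $\rho(\gamma_i)$ is trivial or elliptic one contracts horocycles onto points approaching a fixed point of $\rho(\gamma_i)$, and if $\rho(\gamma_i)$ is hyperbolic one collapses the horoball onto a bounded neighbourhood of its axis, in either case with constant $<1$. The sole obstruction is the excluded configuration: if $\rho$ has a unique fixed point $\xi\in\partial_\infty\mathbb{H}$ then $\rho(\gamma_i)$ must fix $\xi$, and when $\rho(\gamma_i)$ is non-elliptic (parabolic, or hyperbolic with $\xi$ as an endpoint) the horoball geometry forces every $(j,\rho)$-equivariant map on $U_i$ to be at least $1$-Lipschitz, so $C(j,\rho)=1$ even though $C(j,\rho)'$ may be $<1$ --- precisely the case the statement excludes. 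Step 3: outside that configuration, interpolate $f_0$ and the $f_i$ over the compact collars with an equivariant partition of unity, and verify that uniform contraction of both pieces on compact collars keeps the glued map $<1$-Lipschitz; this yields $C(j,\rho)<1$.

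I expect the main obstacle to be Step 1 --- that on the compact core the optimal equivariant Lipschitz constant is computed by the length spectrum of closed geodesics. Thurston proved this when the target is itself Fuchsian, via geodesic laminations and an infinitesimal rigidity argument, but for a non-discrete target one cannot work with a quotient surface, and the substitute --- constructing the maximally stretched lamination and proving it carries the entire Lipschitz constant --- is the genuine analytic heart of \cite{GK}. A secondary, more combinatorial obstacle is the cusp analysis of Step 2: one must classify, for each of the finitely many peripheral pairs $(j(\gamma_i),\rho(\gamma_i))$, exactly when a contracting equivariant map on a horoball exists, and confirm that the only failure is the stated exceptional configuration, which is where the ``if and only if'' genuinely breaks.
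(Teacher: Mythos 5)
The paper offers no proof of this theorem: immediately above the statement it says ``The theorem below is Theorem 1.8 in \cite{GK}'' and moves on, so there is nothing in the source for your reconstruction to be compared against line-by-line. Your outline is, at a high level, a fair summary of the Gu\'eritaud--Kassel strategy: the soft inequality $C'\le C$ over the axis of a $j$-hyperbolic element, the maximally stretched lamination machinery on the thick part, a separate cusp analysis, and the identification of the exceptional configuration as arising from the cusps. You also correctly locate the analytic heart in the construction of the stretch lamination for a non-discrete target.

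There is, however, a genuine error in your Step 2. You claim that when $j(\gamma_i)$ is parabolic and $\rho(\gamma_i)$ is hyperbolic, one ``collapses the horoball onto a bounded neighbourhood of its axis'' to obtain a contracting equivariant map. This is impossible: for any $(j,\rho)$-equivariant map $f$ and any basepoint $x$ in the horoball,
\begin{equation*}
 m\,\ell(\rho(\gamma_i)) \;\le\; d\bigl(f(x),\rho(\gamma_i^m)f(x)\bigr) \;=\; d\bigl(f(x),f(j(\gamma_i^m)x)\bigr) \;\le\; \mathrm{Lip}(f)\,d\bigl(x,j(\gamma_i^m)x\bigr),
\end{equation*}
and $d(x,j(\gamma_i^m)x)$ grows only like $2\log m$ (parabolic displacement), while the left side grows linearly in $m$. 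So no Lipschitz equivariant map exists at all over such a cusp, and no choice of target neighbourhood helps. This case is not handled by a construction; it is \emph{excluded a priori} by the hypothesis $C'(j,\rho)<1$ under the intended convention that a $\gamma$ with $\ell(\rho(\gamma))>0=\ell(j(\gamma))$ contributes $+\infty$ to the supremum --- equivalently, that strict length-spectrum domination requires $\ell(\rho(\gamma))=0$ whenever $\ell(j(\gamma))=0$. Folding the hyperbolic subcase into your exceptional configuration (``hyperbolic with $\xi$ as an endpoint'') is also off: the obstruction here has nothing to do with whether $\rho$ fixes a unique boundary point.

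Once that case is removed, the remaining cusp dichotomy is $\rho(\gamma_i)$ elliptic (contractible, as you say) versus $\rho(\gamma_i)$ parabolic, and the real content of the exceptional condition is that a contracting equivariant map over such a cusp exists precisely when $\rho$ does \emph{not} fix a unique point of $\partial_\infty\mathbb H$. Your sketch asserts this dichotomy but does not explain why a non-unique-fixed-point $\rho$ with parabolic $\rho(\gamma_i)$ admits a contracting cusp map while a unique-fixed-point $\rho$ does not; that comparison (both parabolics have displacement $\sim 2\log m$, so naively the Lipschitz constant seems pinned to $1$ in either case) is exactly where the delicate part of the cusp analysis lives, and it deserves to be flagged as a second substantial gap rather than a routine classification.
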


\begin{remark}
As we will see later, equivariant harmonic maps only exist for reductive representations. To dominate non-reductive representations we would like to use a version of Theorem \ref{gkthm} that holds for variable curvature. The result and the proof of Theorem \ref{gkthm} do not directly transfer. While the ideas for a suitable reformulation and proof lie in \cite{GK}, this is not the focus of the present paper. Hence, for the non-reductive case we settle for length spectrum domination, although we expect the full domination result to be true. From the theorem above, non-reductive representations still lead to AdS $3$-manifolds, which is the most important application. 
\end{remark}

Now suppose $M=\tilde{M}/\Gamma$ is a complete finite volume hyperbolic orbifold. By the Selberg lemma $\Gamma$ admits a finite index torsion free normal subgroup $\Gamma_0$. The quotient $\tilde{M}/\Gamma_0$ is a complete finite volume hyperbolic manifold. We close this section with a lemma that reduces Theorem \ref{bigguy} to the case of hyperbolic manifolds.

\begin{lem} \label{lem:selberg}
  Let $\Gamma$ be a discrete group and $\Gamma_0\subset \Gamma$ a finite index normal subgroup. Let $\rho: \Gamma \to \textrm{Isom}(X,g)$ and $j:\Gamma\to \textrm{PSL}_2(\mathbb{R})$ be representations and let $\rho_0$ and $j_0$ be their restrictions to $\Gamma_0$. Then $C(j,\rho)=C(j_0,\rho_0)$.
\end{lem}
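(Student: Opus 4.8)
The plan is to prove the two inequalities $C(j,\rho)\leq C(j_0,\rho_0)$ and $C(j_0,\rho_0)\leq C(j,\rho)$ separately, the first being essentially trivial and the second requiring an averaging construction. For the easy direction, note that any $(j,\rho)$-equivariant Lipschitz map $f:\mathbb{H}\to X$ is in particular $(j_0,\rho_0)$-equivariant, since $\Gamma_0\subset\Gamma$; its Lipschitz constant is unchanged by restricting the intertwining condition to a subgroup. Hence $\mathrm{Lip}(f)$ is an admissible value on the right-hand side, and taking the infimum gives $C(j_0,\rho_0)\leq C(j,\rho)$.

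For the reverse inequality the content is the following: given a $(j_0,\rho_0)$-equivariant map $f_0$ with $\mathrm{Lip}(f_0)\leq L$, I would like to manufacture a $(j,\rho)$-equivariant map with Lipschitz constant not much larger, and then let $L\to C(j_0,\rho_0)$. The natural device is to average over the finite quotient $Q=\Gamma/\Gamma_0$. Pick coset representatives $\gamma_1,\dots,\gamma_m$ for $Q$. For each $i$ the map $x\mapsto \rho(\gamma_i)^{-1}f_0(j(\gamma_i)x)$ is again $(j_0,\rho_0)$-equivariant (here one uses that $\Gamma_0$ is \emph{normal}, so conjugating the equivariance relation by $\gamma_i$ lands back in $\Gamma_0$), and each has Lipschitz constant $\leq L$ because $j(\gamma_i)$ and $\rho(\gamma_i)$ are isometries. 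Now form the barycenter $f(x)=\mathrm{bar}\big(\rho(\gamma_1)^{-1}f_0(j(\gamma_1)x),\dots,\rho(\gamma_m)^{-1}f_0(j(\gamma_m)x)\big)$, using the canonical barycenter of a finite collection of points in the Hadamard (hence uniquely geodesic, convex) manifold $X$. One checks: (i) $f$ is $(j,\rho)$-equivariant, because permuting the cosets by left multiplication by $\gamma\in\Gamma$ permutes the terms in the barycenter and the barycenter is equivariant under the isometry $\rho(\gamma)$ (here one again invokes normality of $\Gamma_0$ to see the terms get permuted correctly and the residual $\Gamma_0$-factors are absorbed by the equivariance of $f_0$); and (ii) $\mathrm{Lip}(f)\leq L$, because in a $\mathrm{CAT}(0)$ space the barycenter map is $1$-Lipschitz with respect to the $L^2$-type Wasserstein/product metric on configurations, and each coordinate map is $L$-Lipschitz. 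Taking the infimum over $f_0$ yields $C(j,\rho)\leq C(j_0,\rho_0)$.

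I would be slightly careful about two technical points. First, the barycenter of finitely many points in a $\mathrm{CAT}(0)$ space is well-defined and depends $1$-Lipschitzly (in fact convexly) on the points: this is standard and follows from strict convexity of $y\mapsto \sum_i d(y,p_i)^2$ together with the $\mathrm{CAT}(0)$ inequality for the distance function along geodesics; one may cite \cite{Bridson} or simply the Jensen-type convexity of squared distance. Second, one must verify the equivariance bookkeeping: for $\gamma\in\Gamma$ write $\gamma\gamma_i=\gamma_{\sigma(i)}\delta_i$ with $\delta_i\in\Gamma_0$ and $\sigma$ a permutation of $\{1,\dots,m\}$; then $\rho(\gamma_i)^{-1}f_0(j(\gamma_i)j(\gamma)^{-1}x)$, after the substitution and using $j_0,\rho_0$-equivariance of $f_0$ with the element $\delta_i^{-1}\in\Gamma_0$, rearranges to $\rho(\gamma)^{-1}\rho(\gamma_{\sigma(i)})^{-1}f_0(j(\gamma_{\sigma(i)})x)$, so the configuration at $j(\gamma)^{-1}x$ is the $\rho(\gamma)^{-1}$-image of the configuration at $x$ up to reindexing, whence $f(j(\gamma)^{-1}x)=\rho(\gamma)^{-1}f(x)$ as desired.

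The main obstacle, such as it is, is purely organizational: one has to set up the averaging so that normality of $\Gamma_0$ is used exactly where needed (to keep the averaged map well-defined and equivariant), and one has to be sure the Lipschitz constant genuinely does not blow up — this is where the $\mathrm{CAT}(0)$ (equivalently, non-positively curved Hadamard) hypothesis on $X$ is essential, since in positive curvature barycenters need not contract distances. No hard analysis is involved; the lemma is a soft consequence of the contraction property of barycenters in Hadamard manifolds combined with finiteness of the index.
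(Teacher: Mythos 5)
Your proposal is correct and matches the paper's proof in all essentials: both directions are handled the same way, and the reverse inequality is obtained by conjugating $f_0$ by the coset representatives, averaging with a $\mathrm{CAT}(0)$ barycenter, and checking $(j,\rho)$-equivariance via a coset permutation. The only cosmetic difference is that the paper outsources the well-definedness and $1$-Lipschitz contraction of the barycenter to Lemma \ref{gklemma} (quoted from \cite{GK} and upgraded from $\mathbb{H}^n$ to general $\mathrm{CAT}(0)$ targets), whereas you invoke the standard convexity of squared distance directly; these are the same fact.
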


This is essentially done in \cite{GK}, although the authors prove something more general and restrict to the case $X=\mathbb{H}^n$. Our proof relies on a lemma from \cite{GK}. 

\begin{lem} \label{gklemma}
   Let $I$ be any countable index set and $\alpha=(\alpha_i)_{i\in I}\subset \mathbb{R}$ a sequence summing to $1$. Given $p\in K\subset \mathbb{H}$ and $f_i: K\to X$, $i\in I$ such that $$\sum_{i\in I} \alpha_i d(f_1(p),f_i(p))<\infty,$$ the map $$f:=\sum_{i\in I}\alpha_i f_i \hspace{1mm} , \hspace{1mm} x\mapsto \textrm{argmin}\Big \{ p'\in X : \sum_{i\in I} \alpha_i d(p',f_i(x))<\infty \Big \}$$ is well-defined and satisfies $$\textrm{Lip}_x(f)\leq \sum_i \alpha_i \textrm{Lip}_x(f_i) \hspace{1mm} , \hspace{1mm} \textrm{Lip}_Y(f)\leq \sum_i \alpha_i \textrm{Lip}_Y(f_i).$$ If each $f_i$ is equivariant with respect to a pair of representations then so is $f$.

\end{lem}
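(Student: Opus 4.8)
The plan is to read the weighted sum $f=\sum_i\alpha_i f_i$ as a \emph{barycenter map} and to reduce the whole lemma to a single $1$-Lipschitz property of barycenters in a Hadamard manifold. For $x\in K$ let $\mu_x=\sum_{i\in I}\alpha_i\,\delta_{f_i(x)}$ and $\phi_x(z)=\sum_{i\in I}\alpha_i\,d(z,f_i(x))^2$, and define $f(x)$ to be the minimiser of $\phi_x$ over $X$; this is the standard meaning of a weighted average in a $\mathrm{CAT}(0)$ space, and the set $\{z:\phi_x(z)<\infty\}$ displayed in the statement is, by the triangle inequality, either empty or all of $X$. Since $X$ is Hadamard, $z\mapsto\tfrac12 d(z,q)^2$ is smooth with $\mathrm{Hess}\succeq\mathrm{Id}$ for every $q$, so wherever finite $\phi_x$ is proper and $2$-uniformly convex along geodesics and has a unique minimiser; the hypothesis $\sum_i\alpha_i d(f_1(p),f_i(p))<\infty$ — together with local summability of the $\mathrm{Lip}(f_i)$, without which the asserted bounds are vacuous, and automatic when $I$ is finite — keeps $\phi_x<\infty$ near $p$, so $f$ is well defined. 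An isometry of $X$ carries $\mu_x$ to its pushforward and commutes with minimisation of $\phi_x$, so the barycenter is isometry-equivariant; hence if each $f_i$ satisfies $f_i(\rho_1(\gamma)x)=\rho_2(\gamma)f_i(x)$ then $\mu_{\rho_1(\gamma)x}=(\rho_2(\gamma))_\ast\mu_x$ and therefore $f(\rho_1(\gamma)x)=\rho_2(\gamma)f(x)$, which is the equivariance statement.

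The crux is the contraction estimate
\[
d\big(f(x),f(x')\big)\ \le\ \sum_{i\in I}\alpha_i\,d\big(f_i(x),f_i(x')\big),\qquad x,x'\in K.
\]
By restricting $I$ to finite subsets, renormalising, and using that the minimisers of the truncated functions converge to $f(x)$ and $f(x')$ (uniform convexity plus the finiteness above), it suffices to treat finite $I$. For finite $I$, let $\gamma_i\colon[0,1]\to X$ be the constant-speed geodesic from $f_i(x)$ to $f_i(x')$, put $\phi_t(z)=\sum_i\alpha_i d(z,\gamma_i(t))^2$, and let $c(t)$ be the minimiser of $\phi_t$. The Euler--Lagrange equation $\sum_i\alpha_i\log_{c(t)}(\gamma_i(t))=0$ and the nondegeneracy of $\mathrm{Hess}\,\phi_t$ make $c$ a $C^1$ curve by the implicit function theorem, and differentiating in $t$ yields
\[
H(t)\,c'(t)=\sum_i\alpha_i\,(D\log_{c(t)})_{\gamma_i(t)}\big(\gamma_i'(t)\big),\qquad H(t):=\mathrm{Hess}_{c(t)}\big(\tfrac12\phi_t\big)\succeq\mathrm{Id},
\]
where $(D\log_{c(t)})_{\gamma_i(t)}$ is the differential of $q\mapsto\log_{c(t)}(q)$ at $\gamma_i(t)$. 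Two standard facts about Hadamard manifolds finish this: $\|H(t)^{-1}\|\le1$ because $H(t)\succeq\mathrm{Id}$, and the differential of $q\mapsto\log_z(q)$ has operator norm $\le1$ everywhere because geodesics spread in nonpositive curvature (equivalently $\|D\exp_z\|\ge1$). Hence $\|c'(t)\|\le\sum_i\alpha_i\|\gamma_i'(t)\|=\sum_i\alpha_i d(f_i(x),f_i(x'))$, and integrating over $[0,1]$ bounds $d(c(0),c(1))=d(f(x),f(x'))$ as claimed. (Alternatively one can stay in $\mathrm{CAT}(0)$ and argue from the variance inequality $\sum_i\alpha_i d(z,q_i)^2\ge\phi(\mathrm{bar})+d(z,\mathrm{bar})^2$ together with convexity of the metric, in the spirit of Sturm and of Es-Sahib--Heinich.)

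From the contraction estimate the Lipschitz bounds are routine: dividing by $d(x,x')$ and letting $x'\to x$, the term $\alpha_i d(f_i(x),f_i(x'))/d(x,x')$ is, for $x'$ in a fixed small ball, dominated by $\alpha_i$ times the Lipschitz constant of $f_i$ on that ball, which is summable exactly when $\sum_i\alpha_i\mathrm{Lip}_x(f_i)<\infty$ (otherwise there is nothing to prove), so dominated convergence gives $\mathrm{Lip}_x(f)\le\sum_i\alpha_i\mathrm{Lip}_x(f_i)$; taking instead the supremum of difference quotients over a subset $Y$ gives $\mathrm{Lip}_Y(f)\le\sum_i\alpha_i\mathrm{Lip}_Y(f_i)$, and equivariance was already dealt with. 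The main obstacle is the contraction estimate — that $(q_i)\mapsto\mathrm{bar}(\sum_i\alpha_i\delta_{q_i})$ is $1$-Lipschitz for the $\alpha$-weighted $\ell^1$ metric on tuples — together with the slightly delicate truncation from countable to finite $I$, which is exactly where the summability hypothesis on $\sum_i\alpha_i d(f_1(p),f_i(p))$ is used; everything else is formal.
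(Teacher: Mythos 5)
First, a point of reference: the paper does not actually prove this lemma. It is quoted from Gu\'eritaud--Kassel, and the paper's only contribution is the remark that their proof uses nothing about $\mathbb{H}^n$ beyond the $\CAT(0)$ inequality. Your reconstruction is in the same spirit as their argument: interpret $\sum_i\alpha_i f_i(x)$ as the barycenter of $\sum_i\alpha_i\delta_{f_i(x)}$ and reduce everything to the statement that the barycenter is $1$-Lipschitz for the $\alpha$-weighted $\ell^1$ distance on tuples. The finite-$I$ contraction estimate via the differentiated Euler--Lagrange equation, $\mathrm{Hess}\,\tfrac12 d(\cdot,q)^2\succeq\mathrm{Id}$ and $\|D\log_z\|\le 1$ (both from Hessian/Rauch comparison in nonpositive curvature) is correct on a Hadamard manifold, the equivariance argument is fine, and the passage from the contraction estimate to the two Lipschitz bounds is routine as you say. (One cosmetic point: the statement only says $\alpha_i\in\mathbb{R}$; nonnegativity of the weights is needed for convexity and is what is actually used in the application, where all $\alpha_i=1/r$.)

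There is, however, one genuine gap in the well-definedness step for countable $I$. The hypothesis is a \emph{first}-moment condition, $\sum_i\alpha_i\,d(f_1(p),f_i(p))<\infty$, but you minimise $\phi_x(z)=\sum_i\alpha_i\,d(z,f_i(x))^2$, which needs a \emph{second} moment: taking $\alpha_i=2^{-i}$ and $d(f_1(p),f_i(p))=2^{i/2}$ satisfies the hypothesis while $\phi_p\equiv+\infty$, so the minimiser you define need not exist, and your proposed repair (``local summability of $\mathrm{Lip}(f_i)$'') does not help, since it controls $d(f_i(p),f_i(x))$ rather than $d(f_1(p),f_i(p))^2$. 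The standard fix --- and the reason the displayed defining condition in the lemma involves the first power of the distance --- is to minimise the renormalised functional $\tilde\phi_x(z)=\sum_i\alpha_i\bigl(d(z,f_i(x))^2-d(f_1(x),f_i(x))^2\bigr)$, which converges under the first-moment hypothesis because $|d(z,q)^2-d(w,q)^2|\le d(z,w)\bigl(d(z,w)+2d(w,q)\bigr)$, is still $2$-uniformly convex with the same minimiser whenever both are finite, and is exactly Sturm's barycenter on $\mathcal{P}^1(X)$; with that substitution your truncation to finite $I$ and the rest of the argument go through.
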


The authors give a proof for $X=\mathbb{H}^n$ but the proof only uses the fact that $\mathbb{H}^n$ is a $\textrm{CAT}(0)$ metric space.

\begin{proof}[Proof of lemma \ref{lem:selberg}]
If no $(j',\rho')$-equivariant maps exist there is nothing to prove, so assume otherwise. The inequality $C(j',\rho')\leq C(j,\rho)$ is obvious because any $(j,\rho)$-equivariant map is $(j',\rho')$-equivariant. As for the other inequality, write $$\Gamma=\coprod_{i=1}^r\gamma_i\Gamma_0$$ for some collection of coset representatives $\gamma_i$. Let $f$ be a $(j',\rho')$-equivariant map. Notice that for any $\gamma\in \Gamma$, the map  $$f_\gamma: =\rho(\gamma)^{-1}\circ f \circ j(\gamma)$$ depends only on the coset $\gamma\Gamma_0$. Indeed, suppose we are given $\gamma_1,\gamma_2\in \Gamma$ such that $\gamma_1\gamma_2^{-1}\in \Gamma_0$. For $x\in \mathbb{H}$ let $y=j(\gamma_2)^{-1}x$. Then $$f_{\gamma_1}(x) = \rho(\gamma_1)^{-1}\circ f(j(\gamma_1\gamma_2^{-1})y) = \rho(\gamma_2)^{-1}\circ f(y)=f_{\gamma_2}(x).$$ By Lemma \ref{gklemma} the map $$f':= \sum_{i=1}^r \frac{1}{r}\cdot f_{\gamma_i}$$ satisfies $$\rho(\gamma)^{-1}\circ f'\circ j(\gamma)= \sum_{i=1}^r \frac{1}{r}\cdot f_{\gamma\gamma_i}= f'$$ since the sum in the middle is just a rearrangement of the sum describing $f'$. By Lemma \ref{gklemma} again we have $\textrm{Lip}(f')\leq \textrm{Lip}(f)$. Taking $\textrm{Lip}(f)\to C(j',\rho')$, the lemma follows.
\end{proof}
\end{subsection}
\end{section}
\begin{section}{Harmonic maps}
Throughout the paper, we use the letter $A$ to denote some large uniform constant. In the course of a proof, $A$ may grow larger and we will not make this explicit in our notation. At times we write $A_z$ to highlight dependence on some quantity $z$. For functions $f_1,f_2$ we use the convention $f_1\lesssim f_2$ to mean $f_1\leq Af_2$.
\begin{subsection}{Definitions and basic properties}
Let $f:(M,g_0)\to (X,g)$ be a $C^2$ map between Riemannian manifolds. The derivative $df$ defines a section of the endomorphism bundle $T^*M\otimes f^*TX$. This bundle inherits a natural Riemannian metric, and the energy density $e(f)$ is defined to be $$e(f)= \frac{1}{2}||df||_{T^*M\otimes f^*TX}^2=\frac{1}{2}\textrm{tr}_{g_0} f^* g.$$

\begin{defn}
$f$ as above is said to be $\textit{harmonic}$ if for every  $V\subset \subset M$ it is a critical point of the energy functional $$E_V(f) := \int_V e(f) dv_{g_0}$$ 
\end{defn}
Here $dv_{g_0}$ denotes the volume form. Equivalently, $f$ is harmonic if it solves the Euler-Lagrange equation $$\textrm{tr}_{g_0}\nabla df = 0,$$ where $\nabla$ is the Levi-Civita connection on $T^*M\otimes f^*TX$. From elliptic regularity, the harmonic maps in this paper are $C^\infty$.

Let $M=\tilde{M}/\Gamma$ and take a representation $\rho: \Gamma\to \textrm{Isom}(X,g)$. Let $f:\tilde{M}\to X$ be a $C^2$ $\rho$-equivariant map. Since $\rho$ is acting by isometries, the energy density descends to a well-defined function on $M$.

\begin{defn}
A $\rho$-equivariant map $f: \tilde{M}\to X$ is \textit{harmonic} if for every $V\subset \subset M$ it is a critical point along variations of equivariant maps for the twisted energy functional $$E_V(f) :=\int_V e(f) dv_{g_0}.$$ 
\end{defn}

Equivariant harmonic maps inherit all local properties of harmonic maps. When we state a result for harmonic maps it will be implicitly understood to hold for equivariant harmonic maps as well. 

Now we specialize: assume henceforth $M$ is a Riemann surface and $g_0(z)=\sigma(z)|dz|^2$. 

\begin{defn}
A \textit{holomorphic quadratic differential} is a section of the second symmetric power of the holomorphic cotangent bundle. In a local complex coordinate $z$ a quadratic differential is simply a tensor of the form $$\phi(z)dz^2,$$ where $\phi$ varies holomorphically in $z$. 
\end{defn}

A holomorphic quadratic differential $\Phi=\phi(z)dz^2$ defines a singular metric via $|\phi(z)|dz^2$. In the sequel we will call it the $\Phi$-metric. It is flat off the zeroes, where it has cone points.  Away from the zeroes one can choose holomorphic coordinates so that $$\Phi=dz^2$$ Writing $z=x+iy$, the ``horizontal" direction is $x$ and the ``vertical" is $y$. If $f:M\to X$ is harmonic the pullback metric is $$f^*g = e(f)\sigma(z)dzd\overline{z}+\Phi dz^2 + \overline{\Phi}d\overline{z}^2.$$ The $(2,0)$ component $\Phi$ is a quadratic differential, and by harmonicity of $f$ one can check it is holomorphic. $\Phi=:\textrm{Hopf}(f)$ is called the \textit{Hopf differential}.

There are two important quantities associated with a harmonic map from a Riemann surface: the holomorphic energy $H$ and anti-holomorphic energy $L$. Upon complexifying the tangent bundle of $M$, the derivative decomposes into a holomorphic and anti-holomorphic component as $$df = df_z+df_{\overline{z}}.$$ We put $H(f):=||df_z||_{T^*M\otimes f^*TX}^2$, $L(f):=||df_{\overline{z}}||_{T^*M\otimes f^*TX}^2$, so that $e(f) = H(f)+L(f)$. We highlight that $$||\Phi||^2:=|\Phi|_e^2/\sigma^2= H(f)L(f),$$ where $|\cdot|_e$ denotes the Euclidean norm on the space of holomorphic differentials. The Jacobian $J(f)$ satisfies $$J(f)=H(f)-L(f).$$ We also recall the Bochner formula. When $H(f)> 0$ and the metric $f^*g$ is non-degenerate, $$\Delta \log H(f) = -2\kappa(f^*g) H(f) + 2\kappa(f^*g)\frac{||\Phi||^2}{H(f)}+2\kappa(g_0),$$ where $\kappa(\cdot)$ denotes sectional curvature of a metric on $M$ at a point and $$\Delta=\Delta_{\sigma} := \frac{1}{\sigma(z)}\frac{\partial^2}{\partial z \partial \overline{z}}$$ is the Laplacian with respect to the metric $\sigma$. More information in much more general context can be found in \cite{ogtext}, \cite{heat}, and \cite{schoenyau}.
\end{subsection}
\begin{subsection}{Important results for harmonic maps}
In their seminal work \cite{ES}, Eells and Sampson pioneered the heat flow method to prove existence of harmonic maps between closed manifolds when the target has non-positive sectional curvature. Since then there has been a huge amount of progress and far reaching generalizations (see \cite{heat} for instance).

Donaldson implemented the heat flow in \cite{Donaldson} to prove existence of equivariant harmonic maps when $M$ is a closed Riemann surface and $X=\mathbb{H}^3$. This method was used in more general contexts by Corlette \cite{Corlette1}, Labourie \cite{Labourieharm}, Jost-Yau \cite{jostyau}, and Corlette again \cite{Corlette2}. We compile some of the results into one:

\begin{thm} \label{classic} (Corlette, Donaldson, Labourie, Jost-Yau) Suppose $M$ is a complete Riemannian manifold possibly with boundary, $\Gamma\simeq \pi_1(M)$, $X$ is a $\textrm{CAT}(-1)$ Hadamard manifold, and $\rho:\Gamma\to X$ is a reductive representation. If there exists a $\rho$-equivariant map with finite energy (with equivariant boundary values if $\partial M \neq \emptyset$), then there exists an equivariant harmonic map (with the same boundary values).

\end{thm}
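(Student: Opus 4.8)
The plan is an exhaustion argument combined with the solvability of the Dirichlet problem for nonpositively curved targets, with reductivity used exactly once, to rule out loss of compactness. As in the discussion of flat bundles in section 2.2 I work interchangeably with $\rho$-equivariant maps $\tilde M\to X$ and sections of the flat bundle $X_\rho\to M$. First I would fix a finite energy $\rho$-equivariant map $f_0$ realizing the prescribed boundary values and exhaust $M$ by an increasing sequence of compact submanifolds with boundary $\Omega_1\subset\Omega_2\subset\cdots$ with $\bigcup_k\Omega_k=M$. On each $\Omega_k$ I would solve the Dirichlet problem for harmonic sections of $X_\rho$ with boundary data $f_0|_{\partial\Omega_k}$: since $X$ is $\CAT(-1)$ it is nonpositively curved and convex, so Hamilton's theorem on harmonic maps of manifolds with boundary provides a solution $f_k$, which is moreover the unique energy minimizer rel boundary. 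Comparison with $f_0$ then gives the uniform bound $E_{\Omega_j}(f_k)\le E_{\Omega_k}(f_k)\le E_{\Omega_k}(f_0)\le E(f_0)<\infty$ for all $j\le k$.

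Next I would extract a limiting map. The Bochner formula for a harmonic map, together with the fact that $X$ has nonpositive sectional curvature, yields a differential inequality of the shape $\triangle e(f_k)\ge -A\,e(f_k)$ on any fixed compact subset of $M$, the Ricci lower bound of $M$ supplying the constant $A$; crucially this uses nothing about the image of $f_k$. A mean value inequality for subsolutions then upgrades the uniform energy bound to a uniform bound on $e(f_k)$, hence on the local Lipschitz constants of $f_k$, over compact subsets. Consequently, as soon as $f_k(x_0)$ stays in a fixed compact subset of $X$ for one basepoint $x_0$, the maps $f_k$ stay in a fixed compact subset of $X$ over every compact subset of $M$; elliptic bootstrapping and a diagonal argument over the exhaustion then produce a subsequence converging in $C^\infty_{\mathrm{loc}}$ --- up to $\partial M$, by the boundary regularity of harmonic maps into nonpositively curved targets (Hildebrandt--Kaul--Widman) --- to a $\rho$-equivariant harmonic map with the boundary values of $f_0$.

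Everything therefore reduces to showing the $f_k(x_0)$ do not escape to infinity in $X$, and I expect this no-escape statement to be the main obstacle; it is the only place reductivity enters. If $f_k(x_0)\to\xi\in\partial_\infty X$ along a subsequence, one recenters the $f_k$ by a sequence of isometries of $X$ and examines the resulting sequence of conjugated representations; using the visibility of $\CAT(-1)$ spaces together with the convexity of the displacement functions $x\mapsto d(x,\rho(\gamma)\cdot x)$, one deduces that $\rho$ itself must fix the point $\xi\in\partial_\infty X$. Since a $\CAT(-1)$ space contains no flat of dimension $\ge 2$, by the definition of reductivity in section 2.1 a reductive $\rho$ that fixes a boundary point must stabilize a geodesic $\ell\cong\mathbb{R}$ through $\xi$, and in that degenerate case I would finish by projecting: the nearest-point projection $X\to\ell$ is distance-nonincreasing, so composing it with $f_0$ gives a finite energy $\rho$-equivariant map into $\mathbb{R}$, and producing an equivariant harmonic map into $\ell$ becomes a linear problem solved by the usual $L^2$-Hodge argument (after splitting off the fixed set of any reflection in the induced $\Isom(\mathbb{R})$-action). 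In all cases a $\rho$-equivariant harmonic map with the desired boundary values exists.

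This is precisely the architecture of the heat-flow arguments of Corlette, Donaldson, and Labourie in the cocompact case and of the finite-energy treatment of Jost--Yau in the noncompact case; for the no-escape step I would follow their analysis rather than reprove it.
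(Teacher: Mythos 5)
The paper does not actually prove Theorem~\ref{classic}; it is stated as a compilation of the cited results and invoked as a black box. The closest in-paper analogue is the proof of Proposition~\ref{existence} in Section~5, which runs exactly your exhaustion scheme (Dirichlet solutions on an exhaustion, uniform energy bound via comparison with a fixed competitor $\varphi$, interior gradient estimates, Arzel\`a--Ascoli plus bootstrap, and a no-escape step invoking reductivity split into a ``not contained in a parabolic subgroup'' case and a ``stabilizes a flat'' case) in the harder infinite-energy situation. Your proposal is correct and matches that architecture and the cited sources for the main line of the argument.

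The one genuine departure is the degenerate case. When $\rho(\Gamma)$ stabilizes a geodesic $\ell$, you propose to project $f_0$ to $\ell$ and solve a one-dimensional equivariant problem; the paper (following Jost--Yau) instead recenters the already-constructed Dirichlet solutions $f_r$ by isometries $g_r$ of the flat, uses their Lemma~2 to extract a convergent sequence of conjugated representations, and passes to a limit. Your projection reduction is conceptually attractive in the $\CAT(-1)$ setting, where flats are at most geodesics, but it is not quite a one-line ``$L^2$-Hodge'' citation: on a noncompact finite-energy $M$ the linear equivariant problem still needs its own exhaustion or variational minimization, and if the induced $\textrm{Isom}(\mathbb{R})$-action contains both a nontrivial translation and a reflection there is no common fixed point to ``split off'' directly --- one has to pass to the index-two orientation-preserving subgroup, solve there, and then average over the two cosets (as in Lemma~\ref{gklemma}). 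The Jost--Yau renormalization sidesteps both points by reusing the sequence $(f_r)$ already in hand and treating all flats uniformly, which is why it is the route taken in the paper and the sources; your variant is fine once those details are filled in.
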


More details and cases are contained in the sources above. Finally we collect some technical lemmas that will be used in the paper. The first result follows from the definitions.

\begin{lem}
   Let $(N,g_0')$, $(M,g_0)$ be Riemann surfaces and $(X,g)$ a Riemannian manifold. If $\psi:N\to M$ is conformal and $f:M\to X$ is harmonic then $f\circ\psi$ is harmonic.
\end{lem}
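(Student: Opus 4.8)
The plan is to exploit the conformal invariance of the harmonic map equation in real dimension two. Since harmonicity is a local property, I would first reduce to a computation in local holomorphic coordinates $w$ on $N$ and $z$ on $M$, together with local coordinates $(f^1,\dots,f^m)$ on $X$ near a point of the image. In such coordinates, writing $g_0=\sigma(z)|dz|^2$, the Euler--Lagrange equation for $f\colon M\to X$ takes the form
\[
\partial_z\partial_{\overline{z}} f^\alpha + (\Gamma^\alpha_{\beta\gamma}\circ f)\,\partial_z f^\beta\,\partial_{\overline{z}} f^\gamma = 0,
\]
where the $\Gamma^\alpha_{\beta\gamma}$ are the Christoffel symbols of $g$. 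The point I would emphasize is that the conformal factor $\sigma$ does not appear in this equation at all: the tension field is $\sigma^{-1}$ times the left-hand side, so its vanishing is expressed without reference to the domain metric within its conformal class.

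Next I would use that a conformal map between Riemann surfaces is, up to orientation, locally holomorphic, so the coordinates may be chosen with $z=\psi(w)$ a holomorphic function of $w$; then $\partial_{\overline{w}}z=0$ and $\partial_{\overline{w}}z'=0$. A direct chain-rule computation then gives
\[
\partial_w\partial_{\overline{w}}\big(f^\alpha\circ\psi\big) = |z'(w)|^2\,\big(\partial_z\partial_{\overline{z}} f^\alpha\big)\circ\psi, \qquad \partial_w(f^\beta\circ\psi)\,\partial_{\overline{w}}(f^\gamma\circ\psi) = |z'(w)|^2\,\big(\partial_z f^\beta\,\partial_{\overline{z}} f^\gamma\big)\circ\psi .
\]
Substituting into the harmonic map equation written in the coordinate $w$ shows that its left-hand side for $f\circ\psi$ equals $|z'(w)|^2$ times the left-hand side for $f$ evaluated at $\psi(w)$, which vanishes. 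Hence $f\circ\psi$ is harmonic. The orientation-reversing case is handled identically, replacing ``holomorphic'' by ``anti-holomorphic'' and interchanging the roles of $\partial_w$ and $\partial_{\overline{w}}$.

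Alternatively, and more in line with the variational definition given above, I would argue that the Dirichlet energy is a conformal invariant of the domain in dimension two: if $\psi^*g_0=\mu\,g_0'$, then the energy density scales by $\mu$ while the area form scales by $\mu^{-1}$ relative to $\psi^*g_0$, so $e(f\circ\psi)\,dv_{g_0'}=\psi^*\big(e(f)\,dv_{g_0}\big)$. Consequently $E_V(f\circ\psi)$ coincides with the energy of $f$ over $\psi(V)$, and any compactly supported (or equivariant) variation of $f\circ\psi$ pushes forward to one of $f$ with the same first variation of energy, so criticality is preserved.

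I do not expect a serious obstacle here; the only points needing a word of care are the bookkeeping of the conformal factor (respectively the chain rule), the orientation-reversing case, and the remark that, because harmonicity is local, $\psi$ need not be a global diffeomorphism --- a conformal local diffeomorphism suffices, which is automatic under the convention that ``conformal'' means $\psi^*g_0$ is a positive multiple of $g_0'$.
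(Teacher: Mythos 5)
Your proof is correct. The paper does not actually supply an argument for this lemma — it simply remarks that "the first result follows from the definitions" — and your second argument, the conformal invariance of the two-dimensional Dirichlet energy (so that $E_V(f\circ\psi)$ equals the energy of $f$ over $\psi(V)$ and criticality is preserved under variations), is precisely the reading "follows from the definitions" is meant to invoke. Your first, coordinate-based argument via the tension field is also valid and has the small extra virtue of making clear that $\psi$ need not be a local diffeomorphism: the identity $\tau(f\circ\psi)^\alpha = \sigma^{-1}|z'(w)|^2\,\tau(f)^\alpha\circ\psi$ (up to a positive constant) vanishes even at critical points of $\psi$, so the hypothesis can be relaxed to $\psi$ merely holomorphic or antiholomorphic, whereas the energy argument as you phrased it tacitly uses that $\psi$ is nondegenerate.
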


For points $x,y\in X$ we let $d(x,y)$ denote their distance with repsect to the metric $g$. The next result is a consequence of the Hessian comparison theorem. 

\begin{lem}
  If $f_1,f_2:\tilde{M}\to X$ are harmonic maps then the function $$p\mapsto d^2(f_1(p),f_2(p))$$ is $C^\infty$ and subharmonic.
\end{lem}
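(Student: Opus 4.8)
The plan is to realise $u(p) := d^2(f_1(p),f_2(p))$ as the composition of a harmonic map with a smooth convex function on a non-positively curved target, for which both smoothness and subharmonicity are automatic. Form the product map
\[
F := (f_1,f_2) : \tilde M \longrightarrow X\times X,
\]
where $X\times X$ carries the product metric. Since the components of a map into a Riemannian product are uncoupled in the harmonic map equation, $F$ is harmonic, i.e. its tension field $\operatorname{tr}\nabla dF$ vanishes; and $F$ is $C^\infty$ by the elliptic regularity already invoked for harmonic maps in this paper. Then $u = \varrho\circ F$, where $\varrho : X\times X\to\mathbb R$ is $\varrho(x,y)=d^2(x,y)$.

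Next I would record the two properties of $\varrho$ that make the argument work. First, $X\times X$ is again a Hadamard manifold: it is complete and simply connected, and its sectional curvatures are either $0$ or sectional curvatures of $X$, hence $\leq 0$. By the Cartan--Hadamard theorem each $\exp_x$ is a diffeomorphism and there is no cut locus, so $\varrho(x,y)=g_x\big(\exp_x^{-1}(y),\exp_x^{-1}(y)\big)$ is $C^\infty$ on all of $X\times X$, including the diagonal. Second, $\varrho$ is convex: any geodesic of $X\times X$ has the form $t\mapsto(\gamma_1(t),\gamma_2(t))$ with $\gamma_1,\gamma_2$ geodesics of $X$, and the Hessian comparison theorem (comparison with the Euclidean model, using $\kappa(X)\leq 0$) shows $t\mapsto d^2(\gamma_1(t),\gamma_2(t))$ has non-negative second derivative. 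Equivalently $\operatorname{Hess}\varrho\geq 0$ as a symmetric bilinear form. This is the same circle of ideas underlying the preceding lemma.

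Finally I would conclude with the chain rule for the Laplacian of a composition: for an orthonormal frame $\{e_i\}$ on $\tilde M$,
\[
\triangle u = \sum_i \operatorname{Hess}\varrho\big(dF(e_i),dF(e_i)\big) + \big\langle \operatorname{grad}\varrho,\ \operatorname{tr}\nabla dF\big\rangle .
\]
The second term vanishes because $F$ is harmonic, and the first is $\geq 0$ because $\operatorname{Hess}\varrho\geq 0$; hence $\triangle u\geq 0$. Smoothness of $u$ is immediate as it is a composition of $C^\infty$ maps, and subharmonicity is a conformally invariant notion in dimension two, so it is insensitive to which metric in the conformal class of $M$ we use. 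The only substantive inputs are the smoothness and convexity of the squared distance function on the Hadamard manifold $X\times X$; these are classical consequences of non-positive curvature (cf. \cite{schoenyau}), so I do not anticipate any genuine obstacle—the statement is really a packaging of the Cartan--Hadamard and Hessian comparison theorems together with harmonicity of the product map.
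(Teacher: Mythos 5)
Your proof is correct and takes essentially the same route the paper intends: the paper states the lemma only as ``a consequence of the Hessian comparison theorem'' and, in the proof of the uniqueness lemma that follows, cites the corresponding computation in Schoen--Yau. Your packaging of that computation via harmonicity of the product map $(f_1,f_2)\colon\tilde M\to X\times X$ together with smoothness and geodesic convexity of the squared-distance function on the Hadamard manifold $X\times X$ is exactly the standard unwinding of that reference.
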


We will even see a more precise result later on. The final result is referred to as Cheng's Lemma. For the original proof see \cite{chenglemma}.

\begin{lem}
  Let $\tilde{M}$ be a complete simply connected surface with a metric $g_0$ of curvature pinched between $-b^2$ and $0$ for some $b>0$, and let $X$ be a Hadamard manifold. For any $z\in \tilde{M}$ and $r>0$, let $f:B_{g_0}(z,r)\to X$ be a harmonic map whose image lies in a ball of radius $R_0$. Then $$||df||_{T^*\tilde{M}\otimes f^*TX}\lesssim R_0 \cdot\frac{1+br_0}{r_0}.$$
\end{lem}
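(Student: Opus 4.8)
The plan is to follow Cheng's maximum principle argument for the energy density. Abbreviate $e = e(f) = \|df\|^2_{T^*\tilde M\otimes f^*TX}$, fix the centre $z$, and write $\rho = d_{g_0}(z,\cdot)$. Two differential inequalities drive the proof. First, since $X$ is Hadamard its sectional curvature is $\leq 0$, so the target-curvature term in the Eells--Sampson Bochner formula has a favourable sign; combining this with $\mathrm{Ric}_{g_0}\geq -b^2 g_0$ (automatic on a surface with curvature $\geq -b^2$) and the refined Kato inequality $\|\nabla df\|^2 \geq |\nabla e|^2/(4e)$ gives, wherever $e>0$,
\[
\frac{\Delta e}{e} \;\geq\; \frac{|\nabla e|^2}{2e^2} - 2b^2 .
\]
Second, put $q = f(z)$ and $\psi = d^2(f(\cdot),q)$; because the image lies in a ball of radius $R_0$ we have $0\leq \psi\leq 4R_0^2$ on $B_{g_0}(z,r)$, and since $f$ is harmonic the chain rule together with the Hessian comparison theorem in nonpositive curvature ($\mathrm{Hess}\,d^2_q \geq 2g$) yields $\Delta\psi \geq 2e$. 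Both $\psi$ and $\rho$ are smooth on the relevant sets because Hadamard manifolds have empty cut locus.

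Next I would introduce a cutoff $\eta = \chi(\rho/r)$ with $\chi\equiv 1$ on $[0,\tfrac12]$, $\chi\equiv 0$ on $[1,\infty)$, and $\chi$ decreasing; here the Laplacian comparison $\Delta\rho \leq \tfrac1\rho + b\coth(b\rho)$ on $\tilde M$ enters and produces $|\nabla\eta|\lesssim 1/r$ and $|\Delta\eta|\lesssim 1/r^2 + b/r$, which is the source of the $(1+br)/r$ in the statement. The test function is
\[
G \;=\; \frac{\eta^2\,e}{\,5R_0^2-\psi\,},
\]
a nonnegative smooth function on $B_{g_0}(z,r)$ that vanishes on the boundary; assuming $e(z)>0$ (otherwise there is nothing to prove), $G$ is positive at $z$ and hence attains its maximum at an interior point $x_0$, where $\nabla\log G = 0$ and $\Delta\log G \leq 0$. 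Expanding $\log G = 2\log\eta + \log e - \log(5R_0^2-\psi)$, substituting the two inequalities above, using $5R_0^2-\psi\in[R_0^2,5R_0^2]$, and using $\nabla\log G=0$ to trade $|\nabla e|^2/e^2$ for $\lesssim |\nabla\eta|^2/\eta^2 + |\nabla\psi|^2/(5R_0^2-\psi)^2$, one finds that the $|\nabla\psi|$ terms cancel and is left at $x_0$ with $e(x_0)/(5R_0^2) \lesssim |\nabla\eta|^2/\eta^2 - \Delta\eta/\eta + b^2$. Multiplying by $\eta^2(x_0)$, bounding $\eta^2(x_0)e(x_0)\geq R_0^2 G(x_0)$, and inserting the cutoff bounds gives $G(x_0)\lesssim 1/r^2 + b/r + b^2 \le (1+br)^2/r^2$; finally $e(z) = 5R_0^2\,G(z) \leq 5R_0^2\,G(x_0) \lesssim R_0^2(1+br)^2/r^2$, which is the claimed bound after taking square roots (with $r_0$ equal to $r$, or to $r/2$ if one prefers the estimate on $B_{g_0}(z,r/2)$).

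The step I expect to be the main obstacle is the last computation: getting all the first-order terms to absorb. The dangerous pieces are $|\nabla e|$ and $\nabla\psi$, and the entire point of Cheng's choice of $G$ is that dividing by $5R_0^2-\psi$ turns $\Delta\psi\geq 2e$ into a good $+e$ term, while the refined Kato inequality supplies exactly the surplus $|\nabla e|^2/(2e^2)$ needed to dominate what $\nabla\log G = 0$ forces on $|\nabla e|^2/e^2$; keeping careful track of which constants multiply which terms, so that nothing with an $R_0^{-1}$ or $\eta^{-1}$ blowup survives the multiplication by $\eta^2(x_0)$, is the delicate bookkeeping. A minor technical point---that $x_0$ might coincide with $z$, where $\rho$ is not smooth---causes no trouble here, since $\eta\equiv 1$ near $z$ and so $\eta$ is smooth there regardless; this is why Calabi's trick is not needed. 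Everything else is routine, and the argument is essentially that of \cite{chenglemma}.
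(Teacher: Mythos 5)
The paper does not supply a proof of this lemma; it simply states it and refers the reader to \cite{chenglemma} for Cheng's original argument. Your write-up is a correct reproduction of exactly that maximum-principle proof (Bochner plus Kato, the test function $\eta^2 e/(CR_0^2-\psi)$, Laplacian comparison for the cutoff, and the observation that the dangerous $|\nabla\psi|^2$ terms cancel after imposing $\nabla\log G=0$), so it takes essentially the same route as the paper's cited source; one small remark is that one need not invoke a refined Kato constant $1/2$ -- the plain Kato bound $\|\nabla df\|^2\geq |\nabla e|^2/(4e)$ together with a weighted Cauchy--Schwarz $|a+b|^2\leq (1+\varepsilon)|a|^2+(1+\varepsilon^{-1})|b|^2$, with $\varepsilon$ chosen small, already makes the $|\nabla\psi|^2$ terms absorb.
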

\end{subsection}

\begin{subsection}{Energy of harmonic maps}
A harmonic map has finite energy if the \textit{total energy} $E_M(f)=:E(f)$ is finite. The harmonic maps of Theorem \ref{first} will not necessarily have $E(f)<\infty$. We give a precise criterion.

\begin{prop} \label{tame}
 If $M=\tilde{M}/\Gamma$ is a complete finite volume hyperbolic surface, $(X,g)$ is a $\textrm{CAT}(-1)$ Hadamard manifold, and $\rho: \Gamma \to \textrm{Isom}(X,g)$ is a representation, then a finite energy $\rho$-equivariant map exists if and only if $\rho$ has no hyperbolic monodromy.
\end{prop}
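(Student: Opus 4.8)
The plan is to prove both directions by a direct analysis of the energy integral over the cusp neighborhoods, since the behavior on the compact part of $M$ is harmless. First I would record that, because $M$ is finite-volume hyperbolic, the complement of a large compact core is a finite disjoint union of cusp neighborhoods $U(\tau_k) = \{x+iy : 0\le x\le \tau_k,\ y\ge y_0\}/\angled{z\mapsto z+\tau_k}$, so it suffices to understand when a $\rho$-equivariant map has finite energy on each $U(\tau_k)$. Fix one such cusp and let $\gamma\in\Gamma$ be the corresponding peripheral element; the restriction of $f$ to the cusp is governed entirely by the cyclic group $\angled\gamma$ and the isometry $\rho(\gamma)$.

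For the ``if'' direction, assume $\rho(\gamma)$ is parabolic or elliptic, hence $\ell(\rho(\gamma))=0$. The idea is to build an explicit comparison map on each cusp with finite energy and then invoke Theorem \ref{classic} to produce the harmonic map. If $\rho(\gamma)$ is elliptic it fixes a point $q_0\in X$, and the constant map $f\equiv q_0$ is $\rho|_{\angled\gamma}$-equivariant with zero energy on the cusp; if $\rho(\gamma)$ is parabolic, choosing a point $q_0\in X$ one has $d(q_0,\rho(\gamma)q_0)=c<\infty$ and can interpolate linearly across a fundamental domain in the $x$-direction, so that $\|df\|^2 = O(1/c_{\text{const}})$ pointwise while the hyperbolic volume form $dx\,dy/y^2$ has finite total mass on $\{y\ge y_0\}$; thus the energy $\frac12\int e(f)\,dv$ over the cusp is finite. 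Patching these cusp comparison maps to any smooth equivariant map on the compact core (equivariance is automatic once we extend across $M$), we obtain a global finite-energy $\rho$-equivariant map, and Theorem \ref{classic} upgrades it to a harmonic one. One subtlety to handle: reductivity is a hypothesis of Theorem \ref{classic} but not of the Proposition, so in the non-reductive case I would instead observe directly that the statement only asserts existence of a \emph{finite-energy} equivariant map, not a harmonic one — rereading the statement, it asserts a finite-energy equivariant map exists, so Theorem \ref{classic} is not even needed for ``if'', and the comparison construction alone suffices.

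For the ``only if'' direction, assume $\rho(\gamma)$ is hyperbolic with axis $L$ and $\ell:=\ell(\rho(\gamma))>0$; I claim no $\rho$-equivariant map on the cusp has finite energy. The key point is a lower bound on the energy density coming from the fact that $f$ must move points a definite distance: for any $z$ in a fundamental domain, the $\rho(\gamma)$-orbit of $f(z)$ is a sequence of points with $d(f(z),\rho(\gamma)f(z))\ge \ell$, so the horizontal ``stretch'' of $f$ across the annulus $\{y = \text{const}\}/\angled{z\mapsto z+\tau_k}$ is bounded below. Concretely, projecting $f$ onto the axis $L$ (via the nearest-point projection, which is $1$-Lipschitz in $\textrm{CAT}(-1)$ and equivariant for the translation action on $L$), one gets a map to $\R$ that increases by $\ell$ over each horizontal period; by Cauchy--Schwarz the energy of this projected map on the horizontal circle at height $y$ is at least $\ell^2/\tau_k$ times $1/y^{?}$ once we account for the conformal factor. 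Since $e(f)\ge e(\text{projection})$ and the relevant integral $\int_{y_0}^\infty \int_0^{\tau_k} \frac{\ell^2/\tau_k^2}{\text{(stuff)}}\cdot\frac{dx\,dy}{y^2}$ — with the $y^{-2}$ from the hyperbolic volume form partially canceled against the conformal factor relating $|dz|^2$ energy to hyperbolic-metric energy, leaving an integrand $\gtrsim \ell^2/y$ — diverges logarithmically in $y$, the total energy is infinite. The main obstacle, and the step requiring real care, is making this lower bound rigorous: one must correctly track the conformal factors (the energy density is computed with respect to the hyperbolic domain metric $|dz|^2/y^2$, so $e(f) = y^2 \|df\|^2_{\text{Euclidean}}$, and $dv_{g_0} = dx\,dy/y^2$, so $e(f)\,dv_{g_0} = \|df\|^2_{\text{Eucl}}\,dx\,dy$), apply Fubini and the Cauchy--Schwarz / Poincaré-type inequality on each horizontal circle, and verify the resulting integral over $y\in[y_0,\infty)$ genuinely diverges — which it does precisely because the translation length forces a nonzero horizontal energy that does not decay in $y$.
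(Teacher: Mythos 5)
Your ``only if'' direction is essentially the paper's argument: by conformal invariance compute energy in a flat model, use $d(f(z),\rho(\gamma)f(z))\ge\ell(\rho(\gamma))>0$ to get a uniform lower bound on the length of each horizontal circle's image, apply Cauchy--Schwarz to bound $\int_0^\tau e(f)\,dx\ge\ell^2/\tau$ at every height, and integrate in $y$ to get divergence. Two small remarks: the divergence is \emph{linear} in $y$, not logarithmic (once the conformal factor and the volume form cancel, you are left with $\int_{y_0}^\infty\ell^2/\tau\,dy$); and the nearest-point projection onto the axis, while correct, is not needed — the length bound on the image curve already does the work.

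Your ``if'' direction for elliptic monodromy (map the whole cusp to a fixed point) is also correct and matches the paper.

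The parabolic case, however, has a genuine gap, and it is the step that actually requires an idea. You propose a map that interpolates linearly across the $x$-fundamental domain and (implicitly) is constant in $y$, and then argue finiteness because the hyperbolic volume of the cusp is finite. This is a non-sequitur: for a map constant in $y$ with bounded Euclidean derivative, the hyperbolic energy density is $e(f)=y^2\|df\|_{\mathrm{Eucl}}^2$, which blows up like $y^2$, so $e(f)\,dv_{g_0}=\|df\|_{\mathrm{Eucl}}^2\,dx\,dy$ and the integral diverges linearly — exactly the same computation that killed the hyperbolic case, since the flat area of the cusp is infinite. Finite hyperbolic volume does not help unless $e(f)$ is bounded, which it is not here. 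The fix, and what the paper does, is to make the map tend to the parabolic fixed point at $\partial_\infty X$ as $y\to\infty$ along the transverse horospherical flow, at a rate $t\sim v\log(y+1)$. Then the vertical derivative is $v/(y+1)$, and — this is where $\mathrm{CAT}(-1)$ is used essentially — the Rauch comparison theorem gives exponential decay of the Jacobi fields $J_x(y)=\partial_x f(x+iy)$ along the flow, so the horizontal derivative is $O((y+1)^{-uv})$. Choosing $v$ large enough that $uv\ge1$, both derivatives decay at least like $1/y$, and $\int_0^\infty\int_0^\tau\|df\|_{\mathrm{Eucl}}^2\,dx\,dy$ converges. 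Without pushing the image off to infinity in the target, no finite-energy construction exists, and that is precisely the content that distinguishes the parabolic case from the elliptic one.

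You are correct that Theorem \ref{classic} is not needed for this direction, since the statement only claims existence of a finite-energy equivariant map rather than a harmonic one.
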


Before we begin we build a new metric that will be used throughout the paper.  Label the cusp neighbourhoods $C_1,\dots, C_n$. Take collar neighbourhoods $U_k$ of each $\partial C_k$ inside $M\backslash C_k$ and consider the metric on $M$ that agrees with the hyperbolic metric on $M\backslash (\cup_k C_k)$ and is flat on each $C_k\cup U_k$. Then interpolate on a neighbourhood of $\partial U_k\backslash \partial C_k$ that does not touch $\partial C_k$ to a smooth non-positively curved metric $\sigma'$, conformally equivalent to the hyperbolic metric. We will call this the flat-cylinder metric.

We also take this opportunity to introduce the \textit{transverse horospherical flow}. With $X$ as above, consider a horoball $B\subset X$ with horospherical boundary $H$ centered at the fixed point $\xi$ of a parabolic isometry $\psi$. The subgroup generated by $\psi$ preserves $H$ and $B$. The data $(B,H,\xi)$ determines a flow $\varphi_t: B\times [0,\infty)\to B$ defined by $$\varphi_t(p) = \alpha_{p,\xi}(t),$$ where $\alpha_{p,\xi}:[0,\infty)\to X$ is the unique geodesic starting from $p$ and tending towards $\xi$ at $\infty$.

 \begin{lem}
    The transverse horospherical flow is $\langle \psi\rangle$-equivariant.
 \end{lem}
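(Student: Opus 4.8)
The plan is to show that the transverse horospherical flow $\varphi_t$ commutes with the action of $\psi$ on the horoball $B$, i.e. that $\varphi_t(\psi \cdot p) = \psi \cdot \varphi_t(p)$ for all $p \in B$ and $t \geq 0$. The key observation is that the definition of $\varphi_t$ is purely geometric: $\varphi_t(p)$ is the point at parameter $t$ along the unique unit-speed geodesic ray from $p$ asymptotic to $\xi$. Since $\psi$ is an isometry of $X$, it carries geodesics to geodesics, preserves arc length parametrization, and extends to a homeomorphism of $X \cup \partial_\infty X$. The only additional input needed is that $\psi$ fixes the point $\xi \in \partial_\infty X$, which holds because $\xi$ is by hypothesis the fixed point of the parabolic isometry $\psi$.

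First I would fix $p \in B$ and let $\alpha = \alpha_{p,\xi}$ be the unit-speed geodesic ray from $p$ tending to $\xi$. Applying $\psi$, the curve $t \mapsto \psi \cdot \alpha(t)$ is again a unit-speed geodesic ray (isometries preserve geodesics and their parametrizations), it starts at $\psi \cdot p$, and its endpoint at infinity is $\psi \cdot \xi = \xi$ by the fixed-point property of $\psi$. By the uniqueness clause in the definition of $\alpha_{\cdot,\xi}$ — uniqueness of the geodesic ray from a given point asymptotic to a given boundary point holds in a Hadamard manifold — we conclude $\alpha_{\psi \cdot p, \xi}(t) = \psi \cdot \alpha_{p,\xi}(t)$ for all $t$. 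Evaluating at $t$ gives $\varphi_t(\psi \cdot p) = \psi \cdot \varphi_t(p)$, which is exactly $\langle \psi \rangle$-equivariance (since any element of $\langle \psi \rangle$ is a power of $\psi$ and also fixes $\xi$, the same argument applies, or one iterates).

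I should also note for completeness that $\psi$ preserves $B$, so the statement makes sense: this is recorded in the sentence preceding the lemma, and follows because $\psi$ fixes $\xi$ and hence permutes the horospheres centered at $\xi$, while preserving the particular horoball $B$ by the standing hypothesis. No serious obstacle is expected here — the entire content is the interplay between "isometry" and "uniqueness of asymptotic geodesic rays in a Hadamard manifold," both of which are standard (see \cite{Bridson}); the proof is essentially a one-line unwinding of definitions, and the main thing to be careful about is simply invoking uniqueness of the geodesic ray from a point to a boundary point, which is where nonpositive curvature and completeness enter.
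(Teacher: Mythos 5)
Your proof is correct and takes exactly the same route as the paper: both observe that $\psi \cdot \alpha_{p,\xi}$ is a geodesic ray starting at $\psi \cdot p$ with ideal endpoint $\xi$, and then invoke uniqueness of the asymptotic geodesic ray in a Hadamard manifold to conclude it coincides with $\alpha_{\psi\cdot p,\xi}$. Your write-up is slightly more explicit about the fixed-point property $\psi\cdot\xi=\xi$ and about why $\psi$ preserves $B$, but the argument is the same.
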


\begin{proof}
 Notice $$\alpha_{\psi\cdot p, \xi}(0)=\psi \cdot p = \psi\cdot \alpha_{p,\xi}(0).$$ Since $\alpha_{\psi\cdot p, \xi}(t)$ and $\psi\cdot \alpha_{p,\xi}(t)$ describe geodesics with the same starting point and end point, they are identical. 
\end{proof} 

\begin{proof}[Proof of proposition \ref{tame}]
 By conformal invariance of energy we're permitted to do all of our computations in the flat-cylinder metric. Firstly let us assume there is a peripheral $\gamma$ such that $\rho(\gamma)$ is hyperbolic. Take any equivariant map $f:\tilde{M}\to X$ and fix a cusp neighbourhood associated to the peripheral and isometric to $U(\tau)$. As $\rho(\gamma)$ is hyperbolic, $$d_{g}(f(iy),f(\tau+iy))=d_{g}(f(iy),\rho(\gamma)f(iy))\geq \ell(\rho(\gamma))>0,$$ independent of $y$. For each $y$ let $\gamma_y$ be the path $x\mapsto f(x+iy)$, $x\in [0,\tau]$. The inequality above implies $$\ell(\rho(\gamma))\leq \int_0^\tau ||d\gamma_y||_{\sigma'}dy$$ and by Cauchy-Schwarz we obtain $$\frac{\ell(\rho(\gamma))^2}{2\tau}\leq \int_0^\tau \frac{1}{2}||d\gamma_y||_{\sigma'}^2dy\leq \int_0^\tau e(f)(x,y)dy.$$
  Hence,
    $$E(f)\geq E_{V}(f) = \int_a^\infty\int_a^\tau e(f)(x,y)dxdy\geq \frac{\ell(\rho(\gamma))^2}{2\tau}\int_a^\infty dy=\infty,$$
which shows all equivariant maps have infinite energy.

 For the other direction, we simply produce an equivariant finite energy map. We build a finite energy map in a neighbourhood of each cusp, equivariant with respect to the subgroup generated by $\rho(\gamma_j)$ and then extend smoothly to a $\rho$-equivariant map on the (compact) complement of the cusps.

 By induction it suffices to assume that there is only one cusp neighbourhood $V$. We identify it with some $U(\tau)$. Let $\gamma$ be the corresponding curve. If $\rho(\gamma)$ is elliptic then we simply map all of $V$ to a fixed point of $\rho(\gamma)$. This is clearly equivariant and has zero energy in $V$. Henceforward we assume $\rho(\gamma)$ is parabolic. $\langle \rho(\gamma) \rangle$ stabilizes a horoball $B$ with horopsherical boundary $H$. Let $g$ be any $C^\infty$ $\rho|_{\langle \gamma \rangle}$-equivariant map $\mathbb{R}\to H$. Define $f:\tilde{V}\to B$ by $$f(x+iy) = \varphi_{v \log (y+1)}(g(x)),$$ where $\varphi$ is the transverse horospherical flow with respect to the fixed point and $v>0$ will be specified later. We compute $$|df(\partial/\partial y)|_{f(x+iy)}= |\partial/\partial y (v \log (y+1))| = \frac{v}{y+1}.$$ Next, note that $$J_x(y) := \frac{\partial}{\partial x} f(x+iy)$$ is a Jacobi field for each $x$. By the curvature assumption on $X$, the Rauch comparison theorem shows that any Jacobi field on $X$ along a geodesic decays exponentially in time. By compactness of $S^1$ there is a $u>0$ such that $$|J_x(y)|\leq Ae^{-u\cdot v\log (y+1)}$$ for all $x$. Now choose $v$ so that $uv\geq 1$. Then $$|df(\partial/\partial x)|_{f(x+iy)}\leq \frac{A}{(y+1)^{uv}},$$ and furthermore $$E_{V}(f)\leq \int_0^{\infty}\int_0^\tau \frac{v^2+A^2}{2(y+1)^2}dxdy = \frac{\tau(v^2 + A^2)}{2} < \infty,$$ and the result follows. 
\end{proof}

\begin{remark}
The total energy of a harmonic map is finite if and only if the Hopf differential is integrable. Passing to polar coordinates, we see that an integrable holomorphic quadratic differential has a pole of order at most $1$ at a puncture.
\end{remark}

Suppose a representation admits a finite energy equivariant map. If it does not fix a point on the ideal boundary, the harmonic map determined by Theorem \ref{classic} is unique. If $\rho$ stabilizes a geodesic, there is a $1$-parameter family of harmonic maps that differ by translations along that geodesic axis. The standard methods push through to give a uniqueness criterion in our setting.

\begin{lem} \label{unique}
Let $M$ be a complete finite volume hyperbolic surface, let $(X,g)$ be $\textrm{CAT}(-1)$, and let $f_1$ and $f_2$ be equivariant harmonic maps for $\rho$ such that the map $z\mapsto d(f_1,f_2)(z)$ is bounded. If $\rho$ does not fix a point on $\partial_\infty X$ then $f_1=f_2$. If $\rho$ stabilizes a geodesic, then $f_1$ and $f_2$ may differ by translation along a geodesic.
\end{lem}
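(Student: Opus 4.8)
The plan is to adapt the classical uniqueness argument: show that the squared distance descends to a bounded subharmonic function on the parabolic surface $M$, conclude it is constant, and then exploit the equality case of the subharmonicity estimate together with the strict negativity of the curvature.

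First I would set $u(z)=d^2(f_1(z),f_2(z))$. Since $\rho$ acts by isometries, $u(\gamma z)=d^2(\rho(\gamma)f_1(z),\rho(\gamma)f_2(z))=u(z)$, so $u$ descends to a function on $M$, bounded by hypothesis; by the lemma above (subharmonicity of the squared distance between harmonic maps) it is $C^\infty$ and subharmonic. Now $M$ is a complete finite volume hyperbolic surface, hence parabolic, so every bounded subharmonic function on it is constant --- alternatively $u\in L^2(M)$ since $u$ is bounded and $\mathrm{Vol}(M)<\infty$, and Yau's $L^p$ Liouville theorem applies. Either way $u\equiv c^2$ for some $c\geq 0$; if $c=0$ we are done, so assume $c>0$.

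Then $\Delta u\equiv 0$, i.e.\ we sit at the equality case of the subharmonicity inequality (the sharp form of that estimate alluded to when it is stated). As in the arguments of Schoen--Yau and Sampson, $\Delta u\equiv 0$ forces, pointwise in $z$, that $df_1$ and $df_2$ agree after parallel transport along the segment $\sigma_z$ joining $f_1(z)$ to $f_2(z)$ and that the curvature term in the second variation vanishes. Since $X$ is $\mathrm{CAT}(-1)$, all sectional curvatures are $\leq -1<0$, so vanishing of the curvature term forces $df_1(T_z\tilde M)$ to lie in the line $\mathbb R\,\dot\sigma_z(0)$ at every $z$; then the transverse Jacobi fields of the geodesic homotopy $F(z,t)=\sigma_z(t)$ vanish at the two (non-conjugate) endpoints of each $\sigma_z$, hence identically, so $dF$ has rank $\leq 1$ everywhere. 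A $\mathrm{CAT}(-1)$ space contains no isometrically embedded flat of dimension $\geq 2$, so the images of $f_1$ and $f_2$ lie on a single complete geodesic $\ell$ and $f_2=\tau_c\circ f_1$, where $\tau_c$ translates $\ell$ by $c$. Finally, equivariance gives $\rho(\gamma)\tau_c=\tau_c\rho(\gamma)$ on $\ell$ for every $\gamma$ --- the two isometries of $\ell\cong\mathbb R$ agree on $\mathrm{im}(f_1)\cap\ell$, a set of at least two points unless $f_1$ is constant, in which case $\rho$ is elliptic and an elementary separate check gives the claim --- so $\rho$ fixes both endpoints of $\ell$. This establishes the second assertion; and since it contradicts the hypothesis of the first assertion, in that case $c=0$ and $f_1=f_2$.

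The step I expect to be the main obstacle is the equality analysis, i.e.\ passing from $\Delta d^2(f_1,f_2)\equiv 0$ to the rigid conclusion that both maps land on one geodesic. This is exactly where $\mathrm{CAT}(-1)$ enters essentially: in a general $\mathrm{CAT}(0)$ target the same computation only produces a flat strip swept out by the segments $\sigma_z$, and it is the strict negativity of the curvature that collapses that strip to a geodesic. Everything else --- the descent to $M$, the Liouville step, and matching $\tau_c$ against equivariance --- is routine.
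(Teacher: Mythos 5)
Your proof is correct and follows essentially the same route as the paper's: both arguments pass through the subharmonicity of $d^2(f_1,f_2)$, invoke the parabolicity of $M$ (finite volume $\Rightarrow$ no nonconstant bounded subharmonic functions) to conclude the distance is constant, and then analyze the equality case of the Hessian comparison under the CAT$(-1)$ hypothesis to force the images onto a single geodesic, matching the translation against $\rho$-equivariance. The only difference is cosmetic: the paper cites the argument in Schoen--Yau (Chapter 11.2) for the rigidity step, whereas you spell out the geodesic-homotopy/Jacobi-field reasoning and also offer Yau's $L^p$ Liouville theorem as an alternative to the parabolicity argument — both are fine.
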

 
 \begin{proof}
For $z\in M$ let $\{e_1,e_2\}$ be an orthonormal frame for the tangent bundle in a neighbourhood of $z$ and let $\{v_1^0,\dots, v_n^0\}$, $\{v_1^1,\dots, v_n^1\}$ be orthonormal frames for neighbourhoods of $f_1(z)$, $f_2(z)$ respectively. In these frames we write $$(f_k)_*e_i = \sum_{m=1}^n \lambda_{i,m}^kv_m^k.$$ $\{v_1^0,\dots, v_n^0,v_1^1,\dots, v_n^1\}$ is an orthonormal frame near $(f_1(z),f_2(z))\in X\times X$. Define vector fields $X_i\in \Gamma(T(X\times X))$ so that around $(f_1(z),f_2(z))$ the projections onto the first and second factors are $f_1^*e_i$ and $f_2^*e_i$ respectively. Let $d:\tilde{M} \to \mathbb{R}$ be the function $$d(z)=d_{g\oplus g}(f_1(z),f_2(z))$$ which is $C^\infty$ away from the diagonal. From a computation in \cite[Chapter 11.2]{schoenyau}, if we assume $f_1(z)\neq f_2(z)$ then from the fact that the $f_k$ are harmonic, $$\Delta d^2\geq 2d\sum_{i=1}^2 D^2 d_g(X_i,X_i)$$ around $z$. Above, $\Delta$ is the Laplacian on $\tilde{M}$ and $D^2d_g$ is the Hessian of $d_g$, the distance function on $(X,g)$.

By equivariance, $d$ descends to a bounded subharmonic function on $M$.
As $M$ is parabolic in the potential theoretic sense, this function is constant. Therefore, $$2d\sum_{i=1}^2 D^2 d_g(X_i,X_i)=0.$$ This forces $d=0$  or $D^2d_g(X_i,X_i)=0$. In the first case we have $f_1=f_2$ so let us move to the latter. From an argument in \cite[Chapter 11.2]{schoenyau}, this implies either $f_1=f_2$ or $f_1$ and $f_2$ have image in a geodesic and differ by a translation along that geodesic. By equivariance, this last case can only occur if $\rho$ stabilizes a geodesic.
 \end{proof}
 
\end{subsection}

\begin{subsection}{An energy estimate}
The goal of this subsection is to prove Proposition \ref{energy}. Let $\sigma$ denote the hyperbolic metric on $\mathbb{H}$ with constant curvature $-1$. Unless otherwise specified, for the rest of the paper this is the metric on $\mathbb{H}$. We recall from \cite{Wan} a fundamental result in the theory of harmonic maps between surfaces.

\begin{thm}
(Wan) For any holomorphic quadratic differential $\Phi$ on $\mathbb{H}$, there is a (possibly non-surjective) harmonic diffeomorphism $h:\mathbb{H}\to \mathbb{H}$ such that 
\begin{itemize}
    \item $H(h)\geq 1$,
    \item The metric $H(h)\sigma(z)|dz|^2$ is complete on $\mathbb{H}$, and
    \item $\textrm{Hopf}(h)=\Phi$.
\end{itemize}
The harmonic map is unique up to isometries.
\end{thm}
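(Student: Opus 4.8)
The plan is to trade the harmonic map for a single scalar PDE satisfied by its holomorphic energy, solve that equation on $\mathbb{H}$ with the right behaviour at infinity, and then integrate back up to recover $h$.

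If $h:\mathbb{H}\to\mathbb{H}$ is an orientation-preserving harmonic diffeomorphism with $\textrm{Hopf}(h)=\Phi$, then, taking the curvature $-1$ metric $\sigma$ on both domain and target, the Bochner formula recorded above has $\kappa(h^*g)=\kappa(g_0)=-1$ (the pullback of a surface metric under a local diffeomorphism has the same curvature), so $\mathcal H:=H(h)$ solves
$$\triangle_\sigma \log \mathcal H = 2\mathcal H - \frac{2\|\Phi\|^2}{\mathcal H} - 2$$
off the zeros of $\Phi$, with $\mathcal H>\|\Phi\|$ there (that inequality being $J(h)=H(h)-L(h)>0$). The converse passage is what makes this substitution useful. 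Given a positive function $\mathcal H$ on $\mathbb{H}$ solving this equation with $\mathcal H>\|\Phi\|$, put $L:=\|\Phi\|^2/\mathcal H$ and form the symmetric $2$-tensor
$$g_{\mathcal H}:=(\mathcal H+L)\,\sigma\,dz\,d\overline z + \Phi\,dz^2 + \overline{\Phi}\,d\overline z^2 .$$
A short computation shows $g_{\mathcal H}$ is a smooth positive-definite metric (positive definiteness is exactly $\mathcal H\neq\|\Phi\|$; smoothness at the zeros of $\Phi$ is automatic, $\Phi$ being holomorphic) and that the displayed PDE is precisely the statement that $g_{\mathcal H}$ has Gauss curvature $\equiv -1$. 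Since a diffeomorphism between Riemannian surfaces is harmonic if and only if the $(2,0)$-part of the pulled-back target metric is holomorphic (with a $2$-dimensional domain there is no normal direction, so $\overline\partial$ of that $(2,0)$-form controls the full tension field), the developing map of $(\mathbb{H},g_{\mathcal H})$ -- a local isometry into $(\mathbb{H},\sigma)$ and, after an argument, a diffeomorphism onto a possibly proper subdomain -- is the desired $h$, with $\textrm{Hopf}(h)=\Phi$ and $H(h)=\mathcal H$.

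For existence of $\mathcal H$ I would use the sub-/super-solution method on an exhaustion of $\mathbb{H}$ by hyperbolic disks $D_R$: on $D_R$ solve the Dirichlet problem for the equation with a constant supersolution above the boundary data and a subsolution of the shape $\max(\|\Phi\|,\varepsilon)$ with the correct sign against the nonlinearity, producing solutions monotone in $R$; then let $R\to\infty$, extracting a limit by interior Schauder estimates and a diagonal argument, with a local barrier near the zeros of $\Phi$ where $\|\Phi\|$ degenerates. The normalisations single out the right solution. For $\mathcal H\geq 1$: at an interior minimum of $\log\mathcal H$ the equation gives $\mathcal H-\|\Phi\|^2/\mathcal H\geq 1$, hence $\mathcal H\geq 1$; since such a minimum need not exist, one runs this at infinity via the Omori--Yau/Cheng--Yau maximum principle on $(\mathbb{H},\mathcal H\sigma)$, whose curvature one bounds from the equation, and which in Wan's construction is arranged together with completeness of $\mathcal H\sigma$. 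Uniqueness up to isometries of $\mathbb{H}$: two such maps have energies solving the same equation with this normalisation; comparing them (subtract, and use completeness plus the curvature bound to discard the behaviour at infinity, as in the proof of Lemma \ref{unique}) forces the energies to agree, hence $g_{\mathcal H}$ to agree, hence the maps to differ by the ambient isometry identifying their developing maps.

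The whole difficulty is the non-compact analysis: solving a semilinear elliptic equation globally on $\mathbb{H}$ while simultaneously securing $\mathcal H\geq 1$ and completeness of $\mathcal H\sigma$, and -- for the reconstruction -- controlling injectivity of the developing map of $g_{\mathcal H}$, so that $h$ is a genuine, if non-surjective, diffeomorphism. This is the heart of Wan's theorem and of the later refinements of Au--Wan and Han--Tam--Treibergs--Wan; the reduction to the scalar PDE and the recovery of $h$ from $\mathcal H$ are comparatively formal.
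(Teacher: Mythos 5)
The paper states this theorem as a cited result from Wan's paper \cite{Wan} and does not reprove it, so there is no in-paper argument against which to compare. That said, your outline is a faithful sketch of Wan's actual strategy: trade the harmonic map for the vortex-type equation $\triangle_\sigma \log \mathcal H = 2\mathcal H - 2\|\Phi\|^2/\mathcal H - 2$, solve it on $\mathbb{H}$, and reconstruct $h$ as the developing map of the curvature $-1$ metric $g_{\mathcal H}$, using the fact that a local diffeomorphism between surfaces with holomorphic $(2,0)$-part of the pullback metric is harmonic. Two small remarks. First, the normalization $\mathcal H\geq 1$ can be built directly into the barrier: the constant function $1$ is a subsolution (since $0\geq -2\|\Phi\|^2$), and $\|\Phi\|$ is a solution off the zeros of $\Phi$, so $\max(\|\Phi\|,1)$ serves as a global subsolution and the monotone limit automatically satisfies $\mathcal H\geq 1$ without a separate Omori--Yau argument. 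Second, your "constant supersolution above the boundary data" only exists on each bounded piece $D_R$ of the exhaustion because $\|\Phi\|$ blows up at $\partial\mathbb{H}$; you need some argument that the resulting increasing (or decreasing) family $u_R$ stays finite on compacta rather than escaping to $+\infty$ — this, together with the completeness of $\mathcal H\sigma$ and the injectivity of the developing map of $g_{\mathcal H}$ (which is genuinely not automatic, since $g_{\mathcal H}$ need not be complete), is exactly where the content of Wan's paper lies, as you correctly flag in your last paragraph.
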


We now have the machinery to state the energy estimate.

\begin{prop} \label{energy}
 Suppose $f$ is a harmonic map from $\mathbb{H}$ to a $\textrm{CAT}(-1)$ Hadamard manifold $(X,g)$. The energy density is always bounded above by that of any harmonic diffeomorphism $h:\mathbb{H}\to \mathbb{H}$ with the same Hopf differential and $H(h)\geq 1$. The inequality is strict unless $f$ takes $\mathbb{H}$ into a totally geodesic plane of constant sectional curvature $-1$.
\end{prop}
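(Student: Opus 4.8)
Since $f$ and $h$ have the same Hopf differential, $H(f)L(f)=\|\Phi\|^2=H(h)L(h)$. For two positive reals with a fixed product $p$, their sum is a strictly increasing function of the larger one (the larger is $\geq\sqrt p$, and $t\mapsto t+p/t$ increases on $[\sqrt p,\infty)$). Hence $e(f)=H(f)+L(f)\leq H(h)+L(h)=e(h)$ is \emph{equivalent} to $\max(H(f),L(f))\leq\max(H(h),L(h))$, with equality in one forcing equality in the other. As $h$ is a diffeomorphism we may take it orientation preserving, so $\max(H(h),L(h))=H(h)\geq 1$, and in fact $H(h)\geq\|\Phi\|$. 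So it suffices to prove the two pointwise inequalities $H(f)\leq H(h)$ and $L(f)\leq H(h)$.

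\textbf{The differential inequality.} Put $w:=\log H(f)-\log H(h)$, smooth on the open set $\{w>0\}$; at the isolated zeros of $\Phi$ all quantities extend continuously. On $\{w>0\}$ one has $H(f)>H(h)\geq\|\Phi\|$, hence $J(f)=H(f)-L(f)>0$ and $f^*g$ is non-degenerate, so the Bochner formula applies to $f$; combined with $\kappa\leq -1$ on $X$ it gives $\triangle_\sigma\log H(f)\geq 2\bigl(H(f)-L(f)\bigr)+2\kappa(g_0)$, while for $h$ (target of curvature $\equiv-1$) the same formula holds with equality. Subtracting, the domain terms $2\kappa(g_0)$ cancel and, using $L=\|\Phi\|^2/H$, one gets
$$\triangle_\sigma w\ \geq\ 2\bigl(H(f)-H(h)\bigr)\Bigl(1+\frac{\|\Phi\|^2}{H(f)H(h)}\Bigr)\ \geq\ 2\bigl(H(f)-H(h)\bigr)\ \geq\ 2w\qquad\text{on }\{w>0\},$$
the last step because $H(f)-H(h)=H(h)(e^{w}-1)\geq H(h)\,w\geq w$ since $H(h)\geq 1$. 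The same argument with the anti-holomorphic energy (legitimate because the condition $\kappa\leq-1$ is orientation insensitive), applied to $\tilde w:=\log L(f)-\log H(h)$ — on $\{\tilde w>0\}$ one has $L(f)>H(h)\geq H(f)$, so the Bochner relation for $L(f)$ is available — gives $\triangle_\sigma\tilde w\geq 2\tilde w$ on $\{\tilde w>0\}$.

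\textbf{The maximum principle, the main obstacle.} It remains to show that any $u\in\{w,\tilde w\}$ with $\triangle_\sigma u\geq 2u$ on $\{u>0\}\subset\mathbb H$ satisfies $u\leq 0$. If $u$ is bounded above this follows from the Omori–Yau maximum principle on the complete surface $(\mathbb H,\sigma)$ (Ricci $\geq -1$): a sequence $z_k$ along which $u(z_k)\to\sup u$ and $\triangle_\sigma u(z_k)\to 0$ would contradict $\triangle_\sigma u(z_k)\geq 2u(z_k)$ once $\sup u>0$. Boundedness of $u$ (given $H(h)\geq 1$) amounts to an a priori bound on $e(f)$; this is the one genuinely analytic input, obtained either from Cheng's Lemma or from the Bochner formula for $e(f)$ itself — which for $\kappa\leq-1$ reads $\triangle_\sigma e(f)\geq 4e(f)^2-2e(f)-16\|\Phi\|^2$ — via a Keller–Osserman comparison once $\|\Phi\|^2$ is under control. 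Alternatively one avoids boundedness with the explicit barrier $\psi:=\cosh d_\sigma(\cdot,o)$, which satisfies $\triangle_\sigma\psi=2\psi$: then $u-\varepsilon\psi$ is subharmonic wherever positive and so has no positive interior maximum, whence $u\leq\varepsilon\psi$ for all $\varepsilon>0$, i.e. $u\leq 0$. I expect this step — controlling $f$ near the ideal boundary — to be the crux.

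\textbf{Rigidity.} If $e(f)=e(h)$ at a point, then $\max(H(f),L(f))=H(h)$ there, so $w$ (or $\tilde w$) attains its maximum value $0$ at an interior point; rewriting the inequality near such points as $\triangle_\sigma w\geq q\,w$ with $q$ bounded, continuous and positive, the strong maximum principle forces $w\equiv 0$. Thus $H(f)\equiv H(h)$, $L(f)\equiv L(h)$, and the Bochner inequality for $f$ is everywhere an equality; this forces $\kappa\equiv-1$ along $df(T_z\mathbb H)$ for every $z$, and the standard rigidity in the Bochner formula then shows $f$ is totally geodesic as a map, so $f(\mathbb H)$ lies in a totally geodesic hyperbolic plane. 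Conversely, if $f$ maps into such a plane it is a harmonic map into $\mathbb H$ with Hopf differential $\Phi$ and $H(f)\geq H(h)$ after normalizing, hence equals $h$ up to isometry by the uniqueness in Wan's theorem, so $e(f)=e(h)$.
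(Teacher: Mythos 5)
Your reduction and the derivation of the Bochner inequality are sound, and in fact your reduction is arguably cleaner than the paper's: you observe that $e(f)\leq e(h)$ is equivalent to $\max(H(f),L(f))\leq H(h)$ and handle both $H(f)\leq H(h)$ and $L(f)\leq H(h)$ symmetrically (the paper disposes of the $H(f)\leq L(f)$ case in one line that only holds when $H(f)=L(f)$, so your treatment is more complete there). The local differential inequality is also the same one the paper uses.

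The genuine gap is in the maximum principle step, and it is created precisely by your linearization $\triangle_\sigma w\geq 2w$. The paper retains the sharper inequality $\triangle_\sigma w\geq 2(e^w-1)$, and the $e^w$ term is what makes the a priori bound $\sup w<\infty$ provable with no further input: this is a Keller--Osserman situation, since $\int^\infty\bigl(\int^s (e^t-1)\,dt\bigr)^{-1/2}ds<\infty$, whereas for the linear nonlinearity $2u$ the corresponding integral diverges and no such a priori bound follows. Your worry about $\|\Phi\|^2$ spoiling a Keller--Osserman comparison for $e(f)$ is a red herring precisely because the equation for $w$ has no $\|\Phi\|$ term; the paper exploits this by constructing a bounded concave auxiliary function $F$ (the Keller--Osserman comparison function in disguise) with $\limsup_s |F'|^2/(F|F''|)<\infty$ and $\liminf_s F'(s)^3(e^{2s}-1)/(F(s)^4|F''(s)|)>0$, applying Omori--Yau to $F\circ(\eta w)$, and extracting $\sup w<\infty$ from the exponential growth of $e^{2s}$. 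Your barrier alternative does not close the gap either: ``$u-\varepsilon\psi$ is subharmonic where positive, hence has no interior maximum, hence $u\leq\varepsilon\psi$'' is not a valid inference on the non-compact domain $\mathbb H$ unless one already knows $u-\varepsilon\psi$ attains its supremum, i.e.\ that $u=o(\cosh r)$ near $\partial_\infty\mathbb H$. A function growing like a $\lambda$-eigenfunction with $\lambda>2$ satisfies $\triangle_\sigma u\geq 2u$ and outruns $\cosh r$, so the linear inequality alone does not supply the needed growth control. You correctly flag this boundary-behaviour step as ``the crux,'' but neither of the two routes you sketch actually resolves it; the paper's $F$-trick is the missing ingredient. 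The rigidity discussion is fine and essentially matches the paper (Minda's strong-maximum-principle lemma plus Sampson's curvature rigidity).
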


This is essentially a non-compact and generalized version of \cite[Lemma 2.1]{DT}. Proposition \ref{energy} is a consequence of the next lemma.
 
 \begin{lem} \label{betterenergy}
   $L(f)\leq H(h)$ and $H(f)\leq H(h)$ everywhere on $\mathbb{H}$, with equality for one of them if and only if and $f$  maps $\mathbb{H}$ diffeomorphically into a totally geodesic plane $\mathbb{H}\subset X$ of constant curvature $-1$. If equality holds at one point, it holds everywhere.
 \end{lem}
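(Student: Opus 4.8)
The plan is to compare $H(f)$ and $H(h)$ pointwise through the Bochner formula and a maximum principle, in the spirit of Schoen--Yau \cite{schoenyau}, Wolf \cite{Wolf}, and \cite[Lemma 2.1]{DT}, the new feature being the non-compact domain $\mathbb{H}$. Put $P:=\|\Phi\|^{2}$; since $f$ and $h$ share the Hopf differential, $H(f)L(f)=P=H(h)L(h)$ everywhere. As $h$ is a diffeomorphism we may take $J(h)=H(h)-L(h)>0$, so $H(h)^{2}>P$, i.e.\ $H(h)>\sqrt{P}$ everywhere. On $\{H(f)\le L(f)\}$ we then get $H(f)^{2}\le H(f)L(f)=P$, hence $H(f)\le\sqrt{P}<H(h)$, so the inequality holds there (strictly). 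It therefore remains to work on the open set $\Omega:=\{H(f)>L(f)\}$, on which $H(f)>0$, $f^{*}g$ is positive definite (so $f|_{\Omega}$ is an immersion), and $w:=\log(H(f)/H(h))$ is smooth; note $\{w>0\}\subset\Omega$, since $H(f)\le L(f)$ already forces $w<0$.

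The local input is that $\kappa(f^{*}g)\le-1$ on $\Omega$. I would obtain this from the Gauss equation together with harmonicity: $\operatorname{tr}_{g_{0}}\nabla df=0$ makes the second fundamental form of $f|_{\Omega}$ trace-free with respect to the inner product $df_{*}(g_{0})$ on the image plane, so the Gauss correction term is nonpositive and $\kappa(f^{*}g)\le K_{X}\big(df(T\mathbb{H})\big)\le-1$ by the $\mathrm{CAT}(-1)$ hypothesis, with equality at $p$ only if the second fundamental form vanishes at $p$ (and $K_{X}(df_{p}(T_{p}\mathbb{H}))=-1$). Feeding $\kappa(f^{*}g)\le-1$, $J(f)>0$, $\kappa(\sigma)=-1$ into the Bochner formula, and using $\kappa(h^{*}g)\equiv-1$ for the diffeomorphism $h$, gives
$$\triangle_{\sigma}\log H(f)\ \ge\ 2H(f)-\frac{2P}{H(f)}-2,\qquad \triangle_{\sigma}\log H(h)=2H(h)-\frac{2P}{H(h)}-2,$$
so, simplifying the difference with $H(f)L(f)=H(h)L(h)=P$,
$$\triangle_{\sigma}w\ \ge\ 2\big(H(f)-H(h)\big)\Big(1+\tfrac{P}{H(f)H(h)}\Big)\qquad\text{on }\Omega.$$

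On $\{w>0\}$ the right-hand side is positive, so $w$ is strictly subharmonic there. To propagate this to all of $\mathbb{H}$ I would rescale by $g_{h}:=H(h)\,\sigma|dz|^{2}$, which is complete by Wan's theorem and has Gaussian curvature $-1+L(h)/H(h)\in[-1,0)$, hence $\operatorname{Ric}_{g_{h}}\ge-1$; then $\triangle_{g_{h}}w\ge 2(e^{w}-1)\ge 2w$ on $\{w>0\}$, and the Omori--Yau maximum principle rules out $\sup_{\mathbb{H}}w>0$, giving $H(f)\le H(h)$ everywhere. For the rigidity statement, rewrite the inequality as $\triangle_{\sigma}w+c(x)w\ge0$ with $c\le0$ locally bounded (possible since $e^{w}-1=c_{1}(w)w$ with $0<c_{1}\le1$ when $w\le0$): if $w(p_{0})=0$ then $p_{0}\in\Omega$ is an interior maximum, so Hopf's strong maximum principle gives $w\equiv0$ on the component of $\Omega$ containing $p_{0}$; since $\{w=0\}$ is closed and contained in $\Omega$ while that component is open, $\{w=0\}$ is clopen and $w\equiv0$ on $\mathbb{H}$. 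Then $\Omega=\mathbb{H}$, $\triangle_{\sigma}w\equiv0$ forces $\kappa(f^{*}g)\equiv-1$, so $f$ is a totally geodesic immersion into a totally geodesic plane $\mathbb{H}_{0}\subset X$ of curvature $-1$; there it is harmonic with Hopf differential $\Phi$, $H(f)=H(h)\ge1$, and $H(f)\sigma$ complete, so the uniqueness clause of Wan's theorem identifies $f$ with an isometric copy of $h$, in particular a diffeomorphism onto its image. Conversely, composing $h$ with a totally geodesic isometric embedding $\mathbb{H}\hookrightarrow X$ changes neither the Hopf differential nor $H$.

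The step I expect to be the main obstacle is this last leg of the maximum principle. Over a compact surface, as in \cite{DT}, strict subharmonicity of $w$ on $\{w>0\}$ finishes immediately; on $\mathbb{H}$ one must genuinely exploit the completeness of $H(h)\sigma|dz|^{2}$ singled out by Wan's theorem, invoke a generalized Omori--Yau / Cheng--Yau maximum principle, and verify its hypotheses — most delicately an a priori upper bound on $w=\log(H(f)/H(h))$, which should follow from a Cheng-type interior gradient estimate for harmonic maps into $X$.
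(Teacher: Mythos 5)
Your strategy is the same as the paper's in its essentials: compare $\log(H(f)/H(h))$ through the Bochner formula, reduce to the open set where the pullback is non-degenerate, use $\kappa(f^*g)\le -1$, and finish with a maximum-principle argument exploiting the completeness of the metric singled out by Wan's theorem. Two of your local choices are actually cleaner than the paper's: dispatching $\{H(f)\le L(f)\}$ outright via $H(f)\le\sqrt{P}<H(h)$ and then restricting all analysis to $\Omega=\{H(f)>L(f)\}$ is tidier than the paper's detour through the set $U$, and your Hopf strong-maximum-principle rigidity step is a fine substitute for the lemma the paper cites from Minda.

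The genuine gap is exactly where you flag it, but the patch you propose does not work. To run Omori--Yau you need $w$ bounded above, and you suggest that this ``should follow from a Cheng-type interior gradient estimate for harmonic maps into $X$.'' It does not: Cheng's lemma (as used elsewhere in the paper) gives $\|df\|\lesssim R_0(1+br_0)/r_0$ only under the hypothesis that the image of the ball lies in a ball of radius $R_0$ in $X$. Here $f:\mathbb{H}\to X$ is an arbitrary harmonic map with a prescribed Hopf differential; there is no a priori compactness or equivariance forcing the image to have bounded diameter, so Cheng's estimate gives no uniform bound on $e(f)$, hence none on $w$. What the paper actually does is prove $\sup w<\infty$ by a hands-on Keller--Osserman/Cheng--Yau comparison: it builds the explicit bounded, increasing, concave function
$$F(s)=\int_b^s\Big(\int_a^t e^{2\tau}\,d\tau\Big)^{-1/2}dt+1,$$
cuts off $w$ by a function $\eta$, applies the (standard, bounded-function) Omori--Yau principle to $F\circ(\eta w)$, and derives a contradiction from the growth identities $\limsup_{s}\,|F'|^2/(F|F''|)<\infty$ and $\liminf_{s}\,F'(s)^3(e^{2s}-1)/(F(s)^4|F''(s)|)>0$. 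This is precisely the Keller--Osserman device for the exponential nonlinearity $e^w-1$ (the paper points to Cheng--Yau, Section 5), not a gradient estimate, and it constitutes the real technical content missing from your sketch. Your rescaling to $g_h=H(h)\sigma$ is a sensible normalization (and is compatible with the rest of the argument, since $g_h$ is complete with Ricci bounded below), but it does not by itself supply the needed upper bound on $w$; you would still have to run the same $F$-comparison in that metric.
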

 
 \begin{proof} [Proof of proposition \ref{energy}]
 
 Indeed, assuming the above result, if $f$ is not such an embedding with totally geodesic image, then $L(f)<H(h)$ and $H(f)<H(h)$ everywhere. As $$H(f)L(f)=||\Phi||^2 = H(h) L(h),$$ we obtain $L(f)>L(h)$, $H(f)>L(h)$. If $H(f)-L(f)\geq 0$, then $(H(f)-L(f))^2<(H(h)-L(h))^2$. If $H(f)-L(f)\leq 0$, then $L(f)> L(h)$ and $L(h)-H(h)<L(f)-H(f)$, so  $(H(f)-L(f))^2<(H(h)-L(h))^2$ holds here as well. By adding $4H(f)L(f)=4H(h)L(h)$ to both sides we have $$e(f)^2 = (H(f)+L(f))^2 < (H(h)+L(h))^2 = e(h)^2,$$ which yields the desired result. 
 \end{proof}
 
 We will prove Lemma \ref{betterenergy} via the Bochner formula. Our main tool is the generalized maximum principle of Omori-Yau (see \cite{omori} for the version we use and also \cite[Theorem 3]{chengyau} for the extension to manifolds with a lower bound on the Ricci curvature).
 
 \begin{lem} 
 (Omori) Let $M$ be a Riemannian manifold such that all sectional curvatures are bounded from below. Let $f$ be a $C^2$ function on $M$ that is bounded above. There is a sequence $(x_n)_{n=1}^\infty$ such that $f(x_n)\to \sup f$ and $$|\nabla f(x_n)|\to 0 \hspace{1mm} ,  \hspace{1mm} \limsup_{n\to \infty}\Delta f(x_n)\leq 0$$ as $n\to \infty$. 
 \end{lem}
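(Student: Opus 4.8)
The statement is the classical generalized maximum principle of Omori, with the Cheng--Yau refinement permitting a lower Ricci bound in place of the sectional one, so the plan is to recall the standard perturbation argument (the citations \cite{omori}, \cite{chengyau} may of course also be invoked directly). Assume $M$ is complete, as it is in every application in this paper, with sectional curvatures $\geq -K^{2}$, and set $f^{*}:=\sup_{M}f<\infty$. Fix a basepoint $o\in M$, write $r(x):=d(o,x)$, and put $\phi:=(1+r^{2})^{1/2}$. The argument has three steps: first, establish that $\phi$ is proper and satisfies $\phi\geq 1$, $|\nabla\phi|<1$, and $\triangle\phi\leq C$ for a constant $C=C(n,K)$ (in the barrier sense along the cut locus); second, for each $\varepsilon>0$ maximize $f_{\varepsilon}:=f-\varepsilon\phi$ over $M$; third, extract the desired sequence from the maximizers as $\varepsilon\to 0$.

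For the first step I would note that off $\{o\}\cup\operatorname{Cut}(o)$ the function $r$ is smooth with $|\nabla r|=1$, and by the Hessian comparison theorem $\triangle r\leq (n-1)K\coth(Kr)$, while near $o$ the function $r^{2}$ is smooth; a direct computation then gives $\nabla\phi=r\nabla r/(1+r^{2})^{1/2}$, so $|\nabla\phi|=r/(1+r^{2})^{1/2}<1$, and $\triangle\phi=(1+r\triangle r)/(1+r^{2})^{1/2}-r^{2}/(1+r^{2})^{3/2}$, which together with $r\coth(Kr)\lesssim 1+r$ yields $\triangle\phi\leq C(n,K)$. The one technical point is the cut locus: at $x_{0}\in\operatorname{Cut}(o)$ one applies Calabi's trick, sliding the basepoint a small distance $\eta$ along a minimizing geodesic from $o$ to $x_{0}$ to a point $o'$ with $x_{0}\notin\operatorname{Cut}(o')$; then $\phi_{\eta}:=(1+(\eta+d(o',\cdot))^{2})^{1/2}$ is smooth near $x_{0}$, dominates $\phi$ with equality at $x_{0}$, and satisfies the same gradient bound and, letting $\eta\to 0$, the same Laplacian bound, so it serves as a $C^{2}$ upper barrier.

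For the remaining steps: since $f\leq f^{*}$ and $\phi\to\infty$ at infinity, $f_{\varepsilon}\to-\infty$ at infinity, so its superlevel sets are compact by Hopf--Rinow and $f_{\varepsilon}$ attains a global maximum at some $x_{\varepsilon}\in M$. Working with the barrier $\phi_{\eta}$ when $x_{\varepsilon}$ lies on the cut locus, the $C^{2}$ function $f-\varepsilon\phi_{\eta}$ has a local maximum at $x_{\varepsilon}$, hence $\nabla f(x_{\varepsilon})=\varepsilon\nabla\phi_{\eta}(x_{\varepsilon})$ and $\triangle f(x_{\varepsilon})\leq\varepsilon\triangle\phi_{\eta}(x_{\varepsilon})$, giving $|\nabla f(x_{\varepsilon})|<\varepsilon$ and $\triangle f(x_{\varepsilon})\leq\varepsilon C$. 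Finally, given $\delta>0$ choose $y$ with $f(y)>f^{*}-\delta$; then $f(x_{\varepsilon})\geq f_{\varepsilon}(x_{\varepsilon})\geq f_{\varepsilon}(y)=f(y)-\varepsilon\phi(y)>f^{*}-\delta-\varepsilon\phi(y)$, which exceeds $f^{*}-2\delta$ once $\varepsilon$ is small enough, so $f(x_{\varepsilon})\to f^{*}$. Taking $\varepsilon=1/n$ and $x_{n}:=x_{1/n}$ produces the required sequence. The only genuine obstacle is making the gradient and Laplacian comparisons rigorous across the cut locus via the Calabi barrier; once that is in hand the rest is a routine perturbation-and-compactness argument.
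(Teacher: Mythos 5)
Your proof is correct. The paper does not prove this lemma at all --- it is quoted as a classical result with citations to Omori and Cheng--Yau --- so there is nothing internal to compare against; what you have written is the standard perturbation proof (maximize $f-\varepsilon(1+r^2)^{1/2}$, control the barrier via Laplacian comparison, handle the cut locus with Calabi's upper-barrier trick), and all the computations check out: $|\nabla\phi|<1$, $\triangle\phi\leq C(n,K)$ from $\triangle r\leq (n-1)K\coth(Kr)$, and the extraction of $x_n=x_{1/n}$ with $f(x_{\varepsilon})\to\sup f$. Two minor remarks. First, you are right that completeness must be added to the hypotheses: as stated in the paper the lemma is false without it (a bounded function on an incomplete manifold can approach its supremum only at ``missing'' points), and the paper's sole application is to $\mathbb{H}$, which is complete, so this is a harmless but genuine omission in the statement you were asked to prove. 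Second, the curvature hypothesis you actually use is only Ricci bounded below (for the Laplacian comparison), so your argument proves the stronger Cheng--Yau form; since the conclusion requested involves only $\triangle f$ and not the full Hessian, this is exactly the right level of generality and matches the version the paper invokes in the proof of its energy estimate.
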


\begin{proof}[Proof of lemma \ref{betterenergy}]
   By \cite[Corollary 3]{sampson}, the set $\mathcal{D}$ on which $f^*g$ is non-degenerate is either empty or open and dense. From \cite[page 10]{schoenyau} either $H(f)=0$ everywhere or the zeroes are isolated. We replace $\mathcal{D}$ with the open dense set $$U:=\mathcal{D}-\{z:H(f)(z)=0\}.$$ 
It follows from the Bochner formula that 
\begin{itemize}
    \item $H(h)\geq 1$ and solves the PDE $$\Delta \log H(h) = 2H(h) -2\frac{||\Phi||^2}{H(h)}-2.$$
    \item On $U$, $H(f)$ solves the PDE $$\Delta \log H(f) = -2\kappa(f^*g) H(f) + 2\kappa(f^*g)\frac{||\Phi||^2}{H(f)}-2.$$ 
\end{itemize}

The next result is essentially \cite[Theorem 4]{sampson}. It was tweaked to its present form in \cite{DT}.
 
 \begin{lem} \label{curve}
For all $x\in U$, $\kappa(f^*g)\leq -1$. We have equality iff the second fundamental form of $f(\mathbb{H})$ vanishes at $x$. In particullar, $\kappa(f^*g)=-1$ everywhere on $U$ iff $f(U)\subset X$ is totally geodesic.
 \end{lem}
 
 When the metric $f^*g$ is degenerate or if $H(f)\leq L(f)$, $$H(f)\leq L(f)=||\Phi|| = H(h)^{1/2}L(h)^{1/2}<H(h).$$ Hence we can dismiss the case $U=\emptyset$ and furthermore we're allowed to work only on $U$. As stated previously $H(h)\geq 1$ everywhere, so that $H(f)/H(h)$ never vanishes on $U$. Assume for the sake of contradiction that $H(f)>H(h)$ at a point $x$. Necessarily, $H(f)(x)\geq L(f)(x)$. From the Bochner formula we have 
 \begin{align*}
     \Delta \log(H(f)/H(h)) &= 2(H(h)-H(f)) + 2||\Phi||^2(H(h)^{-1}-H(f)^{-1}) \\
     &- 2(\kappa(f^*g)+1)(H(f)-L(f)).
 \end{align*}
  Let $w:=\log (H(f)/H(h))$. Since $\kappa(f^*g)\leq -1$ and $H(h)\geq L(h)$, one can simplify the above equation to $$\Delta w \geq 2(H(f)-H(h))(1+||\Phi||^2/(H(h)H(f)))= 2(H(h)+L(h))(e^w -1),$$ and hence $$\Delta w \geq 2(e^w -1)>0$$ at such an $x$. It now follows that this point $x$ cannot be a local maximum for $H(f)/H(h)$ as otherwise $$0\geq \Delta w >0.$$ Thus, there is a sequence contained in $U$ and tending to the boundary of $\mathbb{H}$ along which $H(f)/H(h)>1$ and  increases to $\sup H(f)/H(h)$. We argue this supremum is finite. Let $b>a>0$ and define $F: [b,\infty)\to \mathbb{R}$ by $$F(s)=\int_b^s \Big ( \int_a^t e^{2\tau} d\tau \Big )^{-1/2}dt +1.$$ $F$ is monotonically increasing, bounded above, and $F''< 0$ everywhere. Extend $F$ smoothly to $\mathbb{R}$ so that it is still monotonic and satisfies $$\lim_{t\to -\infty} F(t) > 0.$$ For some large $N>0$, let $\eta:\mathbb{H}\to [0,1]$ be a $C^\infty$ function that is $0$ on the open set $\{z: e^{w(z)}<1/2N\}$ and is $1$ on $\{z:  e^{w(z)}\geq 1/N\}$. Then $F\circ (\eta w)$ is a bounded $C^\infty$ function on $\mathbb{H}$. By the Omori-Yau maximum principle there is a sequence $(x_n)_{n=1}^\infty$ escaping to the boundary such that $$F\circ w(x_n)=F\circ (\eta w)(x_n)\to \sup F\circ \eta w=\sup F\circ w$$ as $n\to \infty$ and $$|\nabla F\circ w(x_n)|=|\nabla F\circ(\eta w)(x_n)|\to 0 \hspace{1mm} , \hspace{1mm} \limsup_{n\to \infty} \Delta F\circ w(x_n)=\limsup_{n\to \infty} \Delta F\circ (\eta w)(x_n)\leq 0.$$ Above, we removed finitely many points in the sequence so we can assume $w(x_n)> 2/N$ always, and we also used the fact that $w=\eta w$ in this region.
  
An analogue of the computation below is contained in \cite[Section 5]{chengyau}, where they work with global subsolutions. We may choose a subsequence of the $x_n$, and abuse notation by still labelling it $x_n$, so that $$0\leq \frac{F'(w)|\nabla w|}{F(w)^{2}}(x_n)\leq 1/n \hspace{1mm} (*)$$ and $$-\frac{F''(w)|\nabla w|^2}{F^2}(x_n) -\frac{F'(w)\Delta w}{F^2}(x_n)+\frac{(F')^2|\nabla w|^2}{F^3}(x_n)\geq - 1/n.$$ Multiplying the above by $(F'(w))^2/F(w)^2|F''(w)|$ we obtain
\begin{align*}
    &-\frac{F''(w)}{|F''(w)|}\frac{F'(w)^2|\nabla w|^2}{F^4}- \frac{F'(w)^3}{F^4}\frac{\Delta w}{|F''(w)|}+ \frac{F'(w)^2}{F(w)|F''(w)|}\frac{F'(w)^2|\nabla w|^2}{F(w)^4}\\
    &\geq\frac{-F'(w)^2}{nF(w)^2|F''(w)|}
\end{align*}
at $x_n$. Note $F$ satisfies $$\limsup_{s\to \infty} \frac{|F'(s)|^2}{F(s)|F''(s)|}<\infty,$$ and combining this with the line above yields that $$\frac{1}{n^2}- \frac{F'(w)^3}{F^4(w)}\frac{\Delta u}{|F''(w)|}+\frac{A}{n^2}\geq -\frac{A}{n}.$$ Using $\Delta w \geq 2(e^w-1)$ at $x_n$ we infer $$\frac{F'(w)^3(e^{2w}-1)}{F^4(w)|F''(w)|}(x_n) \lesssim \frac{1}{n}$$ as $n\to \infty$. However, it is straightforward to compute $$\liminf_{s\to \infty}\frac{F'(s)^3(e^{2s}-1)}{F(s)^4|F''(s)|}>0.$$ This means $\limsup_{n\to \infty}w(x_n)=\infty$ is impossible.

To understand this supremum we apply the generalized maximum principle to the function $\eta w$. As with $F\circ (\eta w)$, there is a sequence $(y_n)_{n=1}^\infty$ leaving all compact subsets of $\mathbb{H}$ such that after refining if necessary so that $w(y_n)> 2/N$, $$w(y_n)=\eta w(y_n)\to \sup \eta w=\sup w$$ and $$0\geq \limsup_{n\to \infty} \Delta \eta w(y_n) = \limsup_{n\to \infty} \Delta w(y_n) \geq 2(e^{w(y_n)}-1)\geq 0.$$
This forces $\sup H(f)/H(h)= 1$, which contradicts our assumption that $H(f)>H(h)$ at least once. Hence, $H(f)\leq H(h)$ always. If $L(f)>H(h)$ at a point, we can precompose $f$ with the complex conjugation to find a harmonic map $\overline{f}$ with $H(\overline{f})>H(h)$, so this is also impossible. Now that we have our inequalities, a special case of \cite[Theorem 1]{minda} indicates when this inequality is strict.

\begin{lem}
Let $u$ be a real non-positive function on a domain $V$ in the complex plane such that $\Delta u\geq Au$ for a constant $A>0$. Then either $u=0$ on $V$ or $u<0$ on all of $V$.
\end{lem}

With this in mind, take an increasing exhaustion $(D_k)_{k=1}^\infty$ of $\mathbb{H}$ by pre-compact open sets. $e^x \geq x+1$ gives $$\Delta w \geq (H(f)-H(h))w\geq [\max_{D_k}(H(f)-H(h))]w$$ in $D_k\cap U$. It follows that either $H(f)=H(h)$ or $H(f)<H(h)$ everywhere. If $H(h)=H(f)$ then $L(h)=L(f)$ and we see that $f^*g$ is non-degenerate everywhere. By the Bochner formula above this forces $\kappa(f^*g)=-1$, and so by Lemma \ref{curve} $f$ maps $\mathbb{H}$ diffeomorphically into a totally geodesic plane. Identifying this plane with $\mathbb{H}$, the formulas $$h^*\sigma=e(h)\sigma+\Phi + \overline{\Phi} \hspace{3mm} , \hspace{3mm} f^*g = e(f)\sigma + \Phi + \overline{\Phi}$$ show that $f$ differs from $h$ by an isometry of $\mathbb{H}$. This implies $f: (\mathbb{H},h^*\sigma)\to (X,g)$ is an isometric embedding. If $L(f)=H(h)$ at a point, we precompose with the complex conjugation, producing a harmonic map $\overline{f}$ with $H(\overline{f})=H(h)$ at a point, so that by the above it takes $\mathbb{H}$ isometrically onto a totally geodesic plane of constant curvature $-1$. Undoing the conjugation, we recover the same result for $f$.
\end{proof}
\end{subsection}
\end{section}

\begin{section}{Fricke-Teichm{\"u}ller Spaces}
\begin{subsection}{Harmonic diffeomorphisms}
As before, $M=\tilde{M}/\Gamma$ is a complete finite volume hyperbolic surface. Label the punctures of $M$ by $p_1,\dots, p_n$.

\begin{defn}
The \textit{Fricke-Teichm{\"u}ller space} is the subset of the character variety $\textrm{Hom}(\Gamma,\textrm{PSL}_2(\mathbb{R}))/\Gamma$ consisting of conjugacy classes of geometrically finite representations.
\end{defn}

Each representation in this space is the holonomy of a geometrically finite hyperbolic structure on $M$. Fix an $n$-tuple $(\ell_1,\dots,\ell_n)\in \mathbb{R}_{\geq 0}^n$. 

\begin{defn}
Let $T(M,\ell_1,\dots, \ell_n)$ be the subspace of the Fricke-Teichm{\"u}ller space such that convex core of the underlying surface associated to each representation has, for each $\ell_k$, either a puncture if $\ell_k=0$ or a closed geodesic boundary component of length $\ell_k\neq 0$.
\end{defn}

 When the context is clear we just call this the Teichm{\"u}ller space. We represent points as equivalence classes $[S,f]$, where $S$ is a surface and $f$ is a diffeomorphism $f:M\to C(S)$. Another point $[S',f']$ is equivalent if $f^{-1}\circ f'$ is an isometry and isotopic to the identity.
 
 We uniformize to obtain a compatible holomorphic structure on $M$. Around any puncture $p$ we choose a local coordinate $z$ such that $z(p)=0$. A meromorphic quadratic differential $\Phi$ with a pole of order $2$ at such a puncture admits a Laurent expansion $$(a_{-2}z^{-2}+a_{-1}z^{-1}+a_0+\dots)dz^2.$$ The $a_{-2}$ term is invariant under holomorphic coordinate changes that take $0$ to $0$, and correspondingly we call it the \textit{residue} of $\Phi$ at $p$. 
 
  For ease of notation we assume $\ell_{1},\dots, \ell_{d_1}=0$, $\ell_{d_1+1},\dots, \ell_{d_2}>0$, $d_1+d_2=n$. For any $(d_2-d_1)$-tuple of unit norm complex numbers $\theta_{d_1+1},\dots, \theta_{d_2}$ let $P$ be the vector $(\ell_k,\theta_k)_{k=d_1+1}^n$.

\begin{defn}
$Q(M,P)$ is the space of meromorphic quadratic differentials on $M$ with poles of order at most $2$ at the $p_k$ and residues $$-\Lambda(\theta_k)\ell_k^2/16\pi^2.$$ If $\ell_k=0$ we have a pole of order at most $1$.
\end{defn}

The space of holomorphic quadratic differentials $Q(M)$ with pole-type singularities at the cusps of $M$ is a Fr{\'e}chet space with seminorms coming from the restriction of the $L^1$ norm to pre-compact open sets. $Q(M,P)$ inherits the subspace topology from $Q(M)$. The following result can be deduced from the work of Wolf in \cite{Wolf}. It links the two spaces above.

\begin{thm} (Wolf) For any $[S,f]\in T(M,\ell_1,\dots, \ell_n)$ there is a unique harmonic diffeomorphism $$h_f:M\to C(S)$$ in the isotopy class such that $\textrm{Hopf}(h_f)\in Q(M,P)$.
\end{thm}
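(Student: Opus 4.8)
This is, up to the equivariant bookkeeping, a theorem of Wolf \cite{Wolf} (with the cusp ends handled by Lohkamp \cite{Lohkamp}), so the plan is to assemble the argument from the classical harmonic-map toolkit. I would prove existence by solving the harmonic map problem ``with the asymptotics of a good model map,'' and uniqueness by reducing it, via Lemma \ref{unique}, to the parabolicity of $M$.

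For existence, the first step is local. For each puncture $p_k$ I would write down a smooth model map $w_k$ on a neighbourhood of $p_k$, homotopic to $f$ there, whose Hopf differential has the prescribed singularity. When $\ell_k = 0$ the target end is a cusp and $w_k$ is the standard finite-energy cusp-to-cusp map, with Hopf differential having a pole of order at most one. When $\ell_k > 0$ the target end is a funnel whose core geodesic has length $\ell_k$, and $w_k$ wraps the cusp region $\{0 \le x \le \tau_k\}/(z \mapsto z + \tau_k)$ around that geodesic with a prescribed shear; choosing the shear so that in the coordinate $q = e^{2\pi i z/\tau_k}$ the leading coefficient is $-(\theta_k + i)^2 = \Lambda(\theta_k)$ produces a double pole with residue $\Lambda(\theta_k)\ell_k^2/4\tau_k^2$ (the identity $\Lambda(\theta) = -(\theta+i)^2$ is exactly what makes this match). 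Patching the $w_k$ to a smooth map on the compact part yields $w_0 \colon M \to S$, homotopic to $f$, with finite energy near the cusp ends and a logarithmic-type energy blow-up near the funnel ends. Next I would produce a harmonic map with these asymptotics, by the heat flow from $w_0$ or by a compact exhaustion $M_R$ solving Dirichlet problems with boundary data from $w_0$ (available because the target is non-positively curved). The uniform estimates use tools already in the paper: the target being $\textrm{CAT}(-1)$, $d(h_R, w_0)$ is subharmonic, and a barrier built from $w_0$ near each end together with the distance comparison underlying Lemma \ref{unique} bounds it independently of $R$; then the $h_R$ stay in a fixed compact region over each compact piece, their energy densities are locally uniformly bounded by Cheng's Lemma, and a subsequence converges in $C^\infty_{\textrm{loc}}$ to a harmonic map $h$, homotopic to $f$, with $d(h, w_0)$ bounded. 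That $h$ is a diffeomorphism onto $C(S)$ follows as for closed surfaces from Schoen--Yau \cite{schoenyau} and Sampson \cite{sampson} — a harmonic map homotopic to an orientation-preserving diffeomorphism of hyperbolic surfaces has everywhere-positive Jacobian — together with properness of $h$, inherited from $w_0$ through the bounded distance.

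It remains to check $\textrm{Hopf}(h) \in Q(M,P)$ and to prove uniqueness. Near a cusp end, the bounded distance to $w_0$ and interior gradient estimates force $h$ to have finite energy there, so $\textrm{Hopf}(h)$ is integrable near $p_k$ and hence has a pole of order at most one. Near a funnel end, the energy of $h$ grows at most logarithmically, bounding the order of the pole of $\textrm{Hopf}(h)$ by two, and the residue is determined by the leading asymptotics of $h$ near the core geodesic, which by the bounded distance and elliptic estimates agree with those of $w_0$; hence it equals $\Lambda(\theta_k)\ell_k^2/4\tau_k^2$. For uniqueness, suppose $h$ and $h'$ are two such harmonic diffeomorphisms. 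Both are homotopic to $f$, hence equivariant for a common Fuchsian holonomy $j$ of $S$, and since their Hopf differentials have the same poles and residues they share the same leading-order behaviour at every end, so $z \mapsto d(\tilde{h}(z), \tilde{h}'(z))$ is bounded. By Lemma \ref{unique} — applicable because $j$ is non-elementary and so fixes no point of $\partial_\infty \mathbb{H}$ — this forces $h = h'$.

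I expect the genuine obstacle to be the funnel-end analysis in the first and third steps: arranging the model map to have Hopf residue exactly $\Lambda(\theta_k)\ell_k^2/4\tau_k^2$, and then verifying that the actual harmonic map inherits this residue rather than some perturbation of it. This is precisely the infinite-energy harmonic map analysis of \cite{Wolf} and is the only real input beyond the closed-surface theory; the global existence and the uniqueness are comparatively routine, given the distance comparisons of Lemma \ref{unique}, the energy estimates of Section 3, and the parabolicity of $M$.
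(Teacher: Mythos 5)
The paper does not actually prove this statement: it attributes it to Wolf~\cite{Wolf} (with Lohkamp~\cite{Lohkamp} for the cusp-to-cusp ends) and remarks only that the residue values for $\theta\neq 0$ can be extracted by the method of Proposition~\ref{compute}. Your sketch therefore supplies an argument the paper elides by citation, and the route you take is the right one — it is essentially Wolf's, and it runs parallel to the paper's own Section~5 proof of the more general equivariant Theorem~\ref{first}: your model map $w_k$ with shear $\theta_k$ is the paper's $\varphi^\theta$, your residue computation (using $\Lambda(\theta)=-(\theta+i)^2$) is Lemma~\ref{pole} and Proposition~\ref{compute}, your compact exhaustion with barriers from $w_0$ mirrors Proposition~\ref{existence}, and your uniqueness step is Lemma~\ref{laststep} feeding Lemma~\ref{unique} via parabolicity of $M$. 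Two places you are light on, which you correctly flag as the real content: (i) matching the residue exactly rather than merely producing a pole of order two requires showing that along a cylinder exhaustion the limiting harmonic map wraps at constant speed onto the core geodesic — this is precisely what Section~5.2 does to rule out perturbations of the residue, and ``bounded distance to $w_0$ plus elliptic estimates'' is not by itself enough without that argument; and (ii) the diffeomorphism claim should be onto the \emph{interior} of $C(S)$ (as the paper phrases it in Section~6.3), and asserting it requires the Schoen--Yau/Sampson Jacobian positivity together with a properness argument near the funnel boundary that goes slightly beyond bounded distance to the model. These are gaps in detail consistent with a sketch, not errors of strategy.
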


This allows us to define a map $$\Psi: T(M,\ell_1,\dots, \ell_n)\to Q(M,P)$$ by $[S,f]\mapsto \textrm{Hopf}(h_f)$.

\begin{remark}
In \cite{Wolf}, Wolf only explicitly computes and writes down the residue in the event $\theta=0$, although he outlines constructions for $\theta\neq 0$. The values listed above can be computed by following the proof of Proposition \ref{compute} in the current paper.
\end{remark}
\end{subsection}
\begin{subsection}{Proof of Theorem 1.4}
The content of Theorem 1.4 is that the map $\Psi$ is a diffeomorphism. The first step is a dimension count.

\begin{lem} \label{dim}
$T(M,\ell_1,\dots, \ell_n)$ and $Q(M,\ell_1,\dots, \ell_n)$ are homeomorphic to $\mathbb{R}^{6g-6+2n}$.
\end{lem}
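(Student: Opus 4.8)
The plan is to compute $\dim T(M,\ell_1,\dots,\ell_n)$ and $\dim Q(M,P)$ separately and then to observe that each space, for intrinsic reasons, is visibly homeomorphic to a Euclidean space of the dimension computed: $T(M,\ell_1,\dots,\ell_n)$ will appear as a cell cut out by Fenchel--Nielsen coordinates, and $Q(M,P)$ as a non-empty affine subspace of a finite-dimensional vector space. So the lemma reduces to two elementary dimension counts.

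For the Teichm\"uller side I would fix a pants decomposition of $M$. Since $\chi(M)=2-2g-n<0$, such a decomposition has $2g-2+n$ pairs of pants glued along $3g-3+n$ interior curves, with each of the $n$ ends of $M$ being a cusp or a boundary geodesic. Recording the hyperbolic lengths and twist parameters of the $3g-3+n$ interior curves gives Fenchel--Nielsen coordinates on $T(M,\ell_1,\dots,\ell_n)$; the $n$ end lengths are pinned to $\ell_1,\dots,\ell_n$ and contribute no free parameters, and there is no twist along an end. This identifies $T(M,\ell_1,\dots,\ell_n)$ homeomorphically with $(0,\infty)^{3g-3+n}\times\mathbb{R}^{3g-3+n}\cong\mathbb{R}^{6g-6+2n}$, once one invokes the classical facts that this space is non-empty for the prescribed boundary data and that the generalized pairs of pants containing a cusp are handled exactly as the compact ones.

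For the quadratic-differential side I would compactify $M$ to a closed Riemann surface $\bar M$ of genus $g$ by filling in the punctures $p_1,\dots,p_n$, and regard an element of $Q(M)$ as a global meromorphic section of $K_{\bar M}^{2}$. The ambient space is $V:=H^0\bigl(\bar M,\,K_{\bar M}^{2}\otimes\mathcal{O}(2\sum_{k>d_1}p_k+\sum_{k\le d_1}p_k)\bigr)$, whose defining line bundle has degree $4g-4+2d_2+d_1=4g-4+n+d_2$; the Serre dual of its obstruction group is the space of sections of a line bundle of degree $\chi(M)-d_2<0$, hence $H^1$ vanishes and Riemann--Roch gives $\dim_{\mathbb{C}}V=3g-3+n+d_2$. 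At each of the $d_2$ second-order poles the residue (the coordinate-invariant coefficient $a_{-2}$ of Section 4.1) is a linear functional on $V$, and the common kernel of these $d_2$ functionals is exactly $H^0\bigl(\bar M,K_{\bar M}^{2}\otimes\mathcal{O}(\sum_{k=1}^{n}p_k)\bigr)$, which by the same Riemann--Roch computation (its obstruction again vanishes, the relevant degree being $\chi(M)<0$) has complex dimension $3g-3+n$. Therefore the residue map $V\to\mathbb{C}^{d_2}$ is surjective, $Q(M,P)$ is the non-empty preimage of the prescribed residue vector, an affine subspace of $V$ of real dimension $2(3g-3+n+d_2)-2d_2=6g-6+2n$; since on the finite-dimensional $V$ the subspace topology from the Fr\'echet space $Q(M)$ coincides with the usual one, $Q(M,P)$ is homeomorphic to $\mathbb{R}^{6g-6+2n}$.

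Neither computation is deep, and I do not anticipate a genuine obstacle inside this lemma; the only points requiring a moment's care are the non-emptiness of $T(M,\ell_1,\dots,\ell_n)$ and the bookkeeping of Fenchel--Nielsen coordinates in the presence of cusps, both standard consequences of $\chi(M)<0$, together with the independence of the $d_2$ residue functionals, which the Riemann--Roch kernel computation handles. The substantive work of Theorem \ref{frick}---showing that $\Psi$ is actually a homeomorphism---is carried out in the subsequent steps; this lemma only supplies the matching dimensions.
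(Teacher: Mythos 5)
Your proof is correct, and it takes a genuinely different route on both halves. On the Teichm\"uller side, the paper doubles $M$ across the geodesic boundary components, embeds $T(M,\ell_1,\dots,\ell_n)$ into the augmented Teichm\"uller space of a higher-genus surface with nodes, and then counts Fenchel--Nielsen parameters of that larger space before quotienting by the involution. You bypass the doubling entirely and read off Fenchel--Nielsen coordinates from a pants decomposition of $M$ itself: $3g-3+n$ interior curves contributing a length and a twist each, with the $n$ end-lengths pinned. That is shorter and more transparent, and unlike the paper's count it directly exhibits the cell structure rather than only bounding the dimension of an embedding. On the quadratic-differential side, the paper quotes Riemann--Roch for $6g-6+2\sum k_j$ real parameters and then asserts that ``specifying the Laurent expansion at the poles removes $2$ parameters for each puncture'' --- which, read literally, would subtract $2n$ and give the wrong number; what is meant is that only the $d_2$ second-order residues are constrained. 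Your version makes this precise: you verify $H^1=0$ via Serre duality, compute $\dim_{\mathbb{C}}V=3g-3+n+d_2$, and then, crucially, prove the residue map $V\to\mathbb{C}^{d_2}$ is surjective by identifying its kernel with $H^0(\bar M, K^2\otimes\mathcal{O}(\sum p_k))$ and counting again. This surjectivity is exactly what the paper leaves implicit, and it is what guarantees $Q(M,P)$ is a non-empty affine subspace of the stated dimension (hence homeomorphic to $\mathbb{R}^{6g-6+2n}$, as the lemma actually claims) rather than merely a subset cut out by $d_2$ linear conditions. In short: same conclusion, but your argument is more elementary on the Teichm\"uller side and more rigorous on the differentials side.
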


\begin{proof}
 For the Teichm{\"u}ller space, view the punctures as nodes and double the surface across the boundary. By mapping an element to this double, $T(M,\ell_1,\dots, \ell_n)$ then embeds into a strata of the augmented Teichm{\"u}ller space consisting of surfaces of genus $2g+d_2-1$ with $d_1$ nodes. Choosing a pants decomposition that includes all of our boundary curves and nodes (the nodes correspond to pinched curves) and taking the corresponding Fenchel-Nielsen coordinates shows this strata has dimension $$12g+6d_2+4d_1-12.$$ Every curve in the image of $T(M,\ell_1,\dots, \ell_n)$ has an involutive symmetry across the boundary, and so the image is determined by at most $6g-6 +3d_2+2d_1$ coordinates. Fixing the lengths of the boundary curves kills another $d_2$ parameters and we obtain $6g-6+2n$.
 On the other hand, the space of holomorphic quadratic differentials with poles of order bounded above by $k_1,\dots, k_n$ at $p_1,\dots, p_n$ forms a vector space over $\mathbb{C}$ and by Riemann-Roch it has real dimension $$6g-6+2\sum_j k_j.$$ Specifying the Laurent expansion at the poles then removes $2$ parameters for each puncture and we end up with $6g-6+2n$ degrees of freedom.
\end{proof}

For a closed arc $c$ on a hyperbolic surface, let $\ell(c)$ denote the hyperbolic length of the geodesic representative. Below, the surface on which the curve lives will be clear.

\begin{proof}[Proof of Theorem \ref{frick}]
By Brouwer's invariance of domain, it is enough to show $\Psi$ is continuous, injective, and proper. Continuity and injectivity follow from arguments in \cite[Section 4]{Wolf}, so we only need properness. To this end, let $K\subset Q(M,P)$ be compact. Remove cusp neighbourhoods around all punctures, each one chosen small enough so that all simple closed geodesics of $M$ are contained in the resulting subsurface, which we will call $M'$. By a estimate from \cite[Lemma 3.2]{wolf1} $$E_{M'}(h_f)\leq 2\int_{M'} |\Phi|+\textrm{Area}(h_f(M'))\leq 2\int_{M'} |\Phi|+\textrm{Area}(h_f(M)).$$ The Gauss-Bonnet theorem yields $$E_{M'}(h_f)\leq 2\int_{M'} |\Phi|-2\pi\chi(M).$$ 
By a minor and well-understood modification of the proof of the Courant-Lebesque lemma \cite[Lemma 3.1]{cllemma}, we obtain $\ell_{Y'}(h_f'(\gamma))\leq A_{\mathcal{F}}$ for any finite collection $\mathcal{F}$ of simple closed geodesics inside $S$ and any choice of representative pair $(Y',h_f')\in [(Y',h_f')]\in \psi^{-1}(K)$. Since the boundary lengths are fixed we have an upper bound on the lengths of finite collections of simple closed geodesics in all of any $Y'$. We argue that we also have a uniform lower bound on such lengths. On a complete finite volume hyperbolic surface, any essential simple closed geodesic $\delta$ is contained in an embedded annulus. This annulus has a horizontal coordinate specified by $\delta$ and an orthogonal vertical coordinate. Any simple closed geodesic $\delta'$ that transversely intersects $\delta$ once must pass through the entire vertical length of the annulus. If we have a geodesic $\delta$ such that $\ell(h_{f_k}(\delta))$ shrinks to $0$ along some sequence $(Y_k,h_{f_k})$, select a curve $\delta'$ in $M$ as above. From the collar lemma we see that $\ell(h_{f_k}(\delta'))\to \infty$ as $k \to \infty$. However, we can uniformly bound $\ell(h_{f_k}(\delta'))$ from above, so this is impossible.

Now, view the punctures on $M$ as nodes and double across all punctures that ``are opened" to get a noded surface $M^d$. Likewise double all surfaces $(Y,h_f)\in \Psi^{-1}(K)$ across the boundaries. $h_f$ extends by reflection and we get a pair $(Y^d,h_f^d)$. This provides a map $$\iota: T(M)\to T(S_{2g+d_1-1,2d_2})$$ that is a diffeomorphism onto its image. By \cite[Lemma 3.3]{ursula} on any $S_{g,n}$ there is a collection of simple closed curves $\delta_1,\dots, \delta_{6g-5+2n}$ so that the map $$\mathcal{L}_{g,n}: T(S_{g,n})\to \mathbb{R}^{6g-5+2n}$$ given by $$[X,\phi]:=\chi \mapsto (\ell_\chi(\delta_1),\dots, \ell_\chi(\delta_{6g-5+2n}))$$ is a diffeomorphism onto its image.  The composition $\mathcal{L}_{2g+d_1-1,2d_2}\circ \iota$ takes $\Psi^{-1}(K)$ into a compact set, and hence $\Psi$ is proper. As discussed above, this completes the proof.
\end{proof}
\end{subsection}
\end{section}
\begin{section}{Equivariant harmonic maps with poles of order $2$}
We prove Theorem \ref{first} in a number of steps.
\begin{subsection}{Existence of harmonic maps}
Once and for all, fix a complete finite volume hyperbolic surface $M=\tilde{M}/\Gamma$, a $\textrm{CAT}(0)$ Hadamard manifold $(X,g)$, and a reductive representation $\rho: \Gamma \to \textrm{Isom}(X,g)$. We denote both the metric on $M$ and its lift to $\tilde{M}$ by $\sigma$.

There is a finite set of punctures $p_1,\dots, p_n$ with associated peripheral isometries $\gamma_1,\dots, \gamma_n$ such that $\rho(\gamma_j)$ is hyperbolic. If this set is empty then $\rho$ admits a finite energy equivariant map, for which the existence is already known. Hence we declare $n\geq 1$ and by an induction argument we may reduce to $n=1$. Let us now fix some notation: set $\gamma:=\gamma_1$ and write $M=M^c\cup C$, $C$ is a cusp neighbourhood corresponding to $\gamma$ and $M^c$ is the complement. $C$ is isometric to $U(\tau)$ for some $\tau>0$ and we equip it with the relevant coordinates $x+iy$, $0\leq x\leq \tau$, $y>a$. For $r>a$, $C_r$ will be $\{x+iy\in C: a<y\leq r\}$ and $M_r=M^c\cup C_r$. Let $D$ be some fixed fundamental domain with respect to the covering $\pi: \tilde{M}\to M$ and analogously we put $D^c=D\cap \pi^{-1}(M^c)$, $D'=D\cap \pi^{-1}(C)$, $D_r=D\cap \pi^{-1}(C_r)\cup D^c$. We set $i_r:M_r\to M$ to be the inclusion map. Finally, we use $\ell(\cdot)$ to denote the length of a rectifiable curve on a surface and hope there is no confusion with isometries.

\begin{prop} \label{existence}
  Given the data $M,X,\rho$ as above, there exists a $\rho$-equivariant harmonic map $f:\tilde{M}\to X$.
\end{prop}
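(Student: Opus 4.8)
The plan is to follow the approximation scheme of Donaldson and Jost--Yau: exhaust $M$ by the compact surfaces-with-boundary $M_r$, solve a Dirichlet problem on each $\tilde M_r$, and extract a $C^\infty_{\mathrm{loc}}$-convergent subsequence. The one point that needs care, and is absent in the closed or finite-energy settings, is that by Proposition \ref{tame} no $\rho$-equivariant map has finite total energy, so the approximating energies $E_{M_r}(f_r)$ must diverge; the goal is to show the divergence is entirely accounted for by the end $C$, so that the energies over each fixed $M_s$ stay bounded.

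First I would fix a reference map. Let $A\subset X$ be the translation axis of the hyperbolic isometry $\rho(\gamma)$, parametrized by arclength $s\mapsto a(s)$ with $\rho(\gamma)\cdot a(s)=a(s+\ell)$, $\ell:=\ell(\rho(\gamma))$; on $C\cong U(\tau)$ put $f_0(x+iy):=a(x\ell/\tau)$, which is $\langle\gamma\rangle$-equivariant, and extend $f_0$ to a smooth $\rho$-equivariant map $\tilde M\to X$ using contractibility of $X$. Working throughout in the flat-cylinder metric (permissible by conformal invariance of energy), $f_0$ has energy density identically $(\ell/\tau)^2$ deep inside $C$, so $E_{M^c}(f_0)<\infty$ and $E_{C_r}(f_0)\le \ell^2 r/2\tau + A$. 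For each $r$, since $f_0|_{\tilde M_r}$ is a finite-energy $\rho$-equivariant competitor with equivariant boundary values, the Dirichlet problem for equivariant harmonic maps on $\tilde M_r$ with boundary data $f_0|_{\partial\tilde M_r}$ is solvable (cf.\ Theorem \ref{classic} and the standard solvability of the Dirichlet problem into a nonpositively curved Hadamard manifold); let $f_r$ be an energy-minimizing solution.

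The energy bookkeeping is the heart of the matter. By minimality, $E_{M_r}(f_r)\le E_{M_r}(f_0)\le E_{M^c}(f_0)+ \ell^2 r/2\tau + A$. Conversely, running the Cauchy--Schwarz argument from the proof of Proposition \ref{tame} on the subcylinder $C_r\setminus C_s$ (still in the flat-cylinder metric) and using that $\rho(\gamma)$ is hyperbolic gives $E_{C_r\setminus C_s}(f_r)\ge \ell^2(r-s)/2\tau$. Subtracting,
\[
E_{M_s}(f_r)=E_{M_r}(f_r)-E_{C_r\setminus C_s}(f_r)\le E_{M^c}(f_0)+A+\frac{\ell^2 s}{2\tau},
\]
a bound independent of $r\ge s$: the $\ell^2 r/2\tau$ contributions cancel, so the excess energy is exactly absorbed by the hyperbolic end.

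From here I would run the classical argument. Reductivity of $\rho$ keeps the maps $f_r$, restricted to a fixed fundamental domain over $M_s$, from drifting off to $\partial_\infty X$: were $d(f_{r_k}(p_0),o)\to\infty$ for fixed $p_0,o$, one normalizes by isometries $b_k$ with $b_kf_{r_k}(p_0)=o$ and argues, via the usual limiting analysis of $b_k\rho b_k^{-1}$ together with the uniform energy bound, that a limit representation would fix a point on $\partial_\infty X$ (or degenerate onto a flat), contradicting reductivity. Once the images over $M_s$ lie in a fixed ball, Cheng's lemma (or subharmonicity of $e(f_r)$ from the Bochner formula plus a mean-value inequality) upgrades the uniform local energy bound to a uniform $C^1$ bound, hence to uniform $C^\infty_{\mathrm{loc}}$ bounds by elliptic regularity, and a diagonal subsequence converges in $C^\infty_{\mathrm{loc}}$ to a $\rho$-equivariant harmonic map $f\colon\tilde M\to X$. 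I expect the non-escape step to be the main obstacle: the energy bound does not itself locate the images, so one must import the normalization-and-limit argument of Corlette and Labourie and check that it passes unchanged to the present non-compact setting.
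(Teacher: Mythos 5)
Your proposal follows the paper's proof almost line for line: exhaust $M$ by $M_r$, solve Dirichlet problems with boundary data from a reference map taking the cylinder end onto the translation axis, use the hyperbolic monodromy to show that the divergent part of the energy is localized in $C_r\setminus C_s$ and cancels in the difference, so $E_{M_s}(f_r)$ is bounded independently of $r$, and then apply an interior gradient estimate plus Arzel\`a--Ascoli. Your energy bookkeeping (upper bound on $E_{M_r}(f_0)$ from the constant-speed map, lower bound on $E_{C_r\setminus C_s}(f_r)$ from Cauchy--Schwarz and the translation length, subtract) is a slight rearrangement of the paper's inequality $E_{C_r\setminus C_s}(\varphi)\le E_{C_r\setminus C_s}(f_r)$, but it yields the identical conclusion.

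One correction on the step you yourself flag as the main obstacle. You phrase the non-escape argument as a proof by contradiction: ``normalize by $b_k$, pass to a limit, and the limit representation fixes a boundary point or degenerates onto a flat, contradicting reductivity.'' Under the paper's definition, a representation that stabilizes a totally geodesic flat is reductive, so in that case there is no contradiction and the maps genuinely may drift. The paper therefore splits into two cases: if $\rho(\Gamma)$ is not contained in a parabolic subgroup, one shows \emph{directly} (no normalization) that $f_r(x_0)$ avoids a finite covering family of horoballs, using the uniform Lipschitz bound along a loop $\gamma$ with $\rho(\gamma)\xi\neq\xi$; if $\rho$ stabilizes a flat, one runs the Jost--Yau normalization $g_rf_r(x_0)=y_0$, extracts a limit harmonic map equivariant under $g\rho g^{-1}$, and then \emph{conjugates back} by $g^{-1}$ to produce the desired $\rho$-equivariant map. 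So the flat case is not a contradiction but a constructive renormalization, and you should treat it as such rather than expecting reductivity to rule it out.
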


\begin{proof}
  
 Let $\alpha:[0,\tau]\to X$ be a constant speed curve with image in the axis of $\rho(\gamma)$ and so that $\alpha(\tau)=\rho(\gamma)\alpha(0)$. By \cite{Corlette2} there exists a unique harmonic section $s_r$ of the pullback bundle $i_r^*X\to M_r$ with boundary values $\alpha$. Extend $s_r$ to $M$ via $s_r(x,t)=s_r(x)$. The $s_r$ induce equivariant maps $f_r:\tilde{M}\to X$, that are harmonic on $\pi^{-1}(M_j)$. We prove the $f_r$ converge along a subsequence in the $C^\infty$ topology to an equivariant harmonic map.

Let $\varphi$ be any non-harmonic equivariant map corresponding to a section of $i_{0}^*X\to M^c$ with boundary values $\alpha$.  As with $f_r$, define $\varphi$ on the rest of $D$ by $\varphi(x,t)=\varphi(x)$ and then extend equivariantly to $\tilde{M}$. Let $\beta$ be the image of $\alpha$ on the geodesic axis of $\rho(\gamma)$ and set $$\beta_t^r:=f_r([0,\tau]\times \{t\}).$$  Notice that $|d\varphi|_{\sigma'}=\ell(\beta)/\tau$ on $C$ since it has constant speed. For $r>s$,  $$2E_{C_r\backslash C_s}(\varphi) = \int_{C_r\backslash C_s} |d\varphi|_{\sigma'}^2 dv_{\sigma'}= (r-s)\ell(\beta)^2/\tau.$$ As $\beta$ is a geodesic arc in a negatively curved space, $$s\ell(\beta)\leq \int_0^s\ell(\beta_t^r) dt,$$ and hence for any $r>s$, $$E_{C_r\backslash C_s}(\varphi) \leq \frac{1}{2}\int_s^r\ell(\beta_t^r)^2/\tau dt\leq\frac{1}{2}\int_s^r\Big (\int_{S^1\times \{t\}} |df_r|d\theta \Big )^2\tau^{-1}dt\leq E_{C_r\backslash C_s}(f_r).$$
From the non-positive curvature hypothesis $f_r$ minimizes energy among maps to $X$ with the same equivariant boundary values. In particular, $$E_{M_r}(f_r)\leq E_{M_r}(\varphi),$$
and moreover $$E_{M_s}(f_r) = E_{M_r}(f_r)-E_{M_r\backslash M_s}(f_r)\leq E_{M_r}(\varphi)-E_{M_r\backslash M_s}(\varphi) =E_{M_s}(\varphi).$$
By a classical PDE estimate (for a source see \cite[Lemma 5.3.4]{heat}), $$\sup_{D_s} e(f_r) = \sup_{M_s}e(f_r)\leq A_s E_{M_{s+1}}(f_r)\leq A_sE_{M_{s+1}}(\varphi),$$ where $A_s$ depends on the Ricci curvature of $M_{s+1}$, the injectivity radius on $M_s$, and $\textrm{dist}(\partial M_s,\partial M_{s+1})$. Since $\rho$ is acting by isometries we get the same bound in all of $\pi^{-1}(M_r)$. Next, we claim there is a compact set $O_s\subset X$ such that $$f_r(D_s)\subset O_s$$ for all $r$. Appealing to the energy density bound above, it is enough to show that for a fixed point $x_0\in D_s$, $f_r(x_0)$ stays within some compact set as $r\to \infty$. We find it convenient from here to split cases. Firstly, let us assume that the image of $\rho$ does not lie in a parabolic subgroup. Let $\xi$ be a point in the boundary at infinity $\partial_\infty X$. There is loop $\gamma: [0,L]\to M$ parametrized by arclength such that $$\rho(\gamma)(\xi) \neq \xi.$$ Choose $\ell$ so that the image of $\gamma$ under $\pi$ lies entirely in $M_\ell$ and let $A_\ell$ be a uniform bound on the derivative in $\pi^{-1}(M_\ell)$. We then have, for $r>\ell$, $$d_g(\rho(\gamma)f_r(x_0),f_r(x_0))=d_g(f_r(\gamma (x_0)),f_r(x_0))\leq A_\ell L.$$ This is because lifting $\gamma$ to the universal cover gives a path between $x_0$ and $\gamma\cdot x_0$ that remains within lifts of $M_\ell$. Choose a neighbourhood $B$ of $\xi$ in $X\cup\partial_\infty X$ such that $$d_g(B\cap X,\rho(\gamma)B\cap X)> A_\ell L.$$ Then $f_r(x_0)$ cannot enter $B$, no matter how large $r$ grows. Via compactness we find a finite number of neighbourhoods $(B_k)_k$ as above that cover the boundary sphere. Choosing $O_s:=X\backslash (\cup_k N_k)$ the claim follows. Notice then that $f_r$ takes any lift of $M_s$ to a compact set: $$f_r(\gamma D_s)\subset \rho(\gamma)O_s$$ for all $r,s$. It now follows by a well-known argument, namely an application of the Arzel{\`a}-Ascoli theorem and a bootstrap, that a subsequence of the $(f_r)_{r>0}$ converges uniformly on compact subsets of $\tilde{M}$ to a harmonic map $f_\infty$. By equivariance of the $f_r$ on $\pi^{-1}(M_r)$, $f_\infty$ is necessarily equivariant. 

We next treat the case where $\rho$ stabilizes a totally geodesic flat $F$. $F$ is a symmetric space and identifies isometrically as $$G/H:=(O(n)\rtimes \mathbb{R}^n)/O(n).$$ Fix two points $x_0\in D_s$ and $y_0\in F$ and for each $r$ choose $g_r\in G$ such that $g_rf_r(x_0)=y_0$. We notice that for any $y\in F$ and $\gamma\in \Gamma$, $d(g_r\rho(\gamma)g_r^{-1}y,y)$ is uniformly bounded in $r$. Indeed,
\begin{align*}
    d_g(g_r\rho(\gamma)g_r^{-1}y,y) &\leq d_g(g_r\rho(\gamma)g_r^{-1}y,g_r\rho(\gamma)g_r^{-1}y_0) + d_g(g_r\rho(\gamma)g_r^{-1}y_0,y_0) + d_g(y,y_0) \\
    &= 2d_g(y,y_0) + d_g(g_r\rho(\gamma)g_r^{-1}y_0,y_0) \\
    &= 2d_g(y,y_0) + d_g(g_r\rho(\gamma)f_r(x_0), g_rf_r(x_0)) \\
    &= 2d_g(y,y_0) + d_g(g_rf_r(\gamma\cdot x_0), g_rf_r(x_0)) \\
    &= 2d_g(y,y_0) + d_g(f_r(\gamma\cdot x_0), f_r(x_0)),
\end{align*}
and we know $f_r$ has a uniform energy density bound on $\rho(\Gamma)\cdot D_r$. By the  argument of \cite[Lemma 2]{jostyau} there is a sequence $(r_n)_{n=1}^\infty$ increasing to $\infty$ and an element $g_\infty\in G$ such that for every $\gamma \in \Gamma$ and $y\in F$, $$\lim_{n\to \infty} g_{r_n}\rho(\gamma)g_{r_n}^{-1}y=g_\infty\rho(\gamma)g_\infty^{-1}y.$$  The orbit of the point $x_0$ under the family of maps $g_{r_n}f_{r_n}$ is a singleton, and by our uniform energy bound we see as above that there is a compact set $O_s$ such that $$g_{r_n}f_{r_n}(D_s)\subset O_s.$$  Arguing as above there is a subsequence along which $g_{r_n}f_{r_n}$ converges to a harmonic map $f_\infty$. Note that $g_{r_n}f_{r_n}$ is $g_{r_n}\rho(\Gamma)g_{r_n}^{-1}$-equivariant, so that $f_\infty$ is $g_\infty\rho(\Gamma)g_\infty^{-1}$-equivariant. Therefore, we may take $f:= g_\infty^{-1}f_\infty$ as the sought harmonic map. \end{proof}

 We use the ideas above to build a family of harmonic maps, indexed by a real parameter $\theta\in \mathbb{R}$. We perform a \textit{fractional Dehn twist} on each cylinder $C$. This is the map given in the cusp coordinates by $$x+iy\mapsto x+\theta y + iy$$ on $C$ and the identity map on the rest of $M$. Lift to a map $d^\theta$ on $\tilde{M}$. The lift commutes with the relevant parabolic isometry. Define $f_r^\theta$ to be the equivariant harmonic map on $\pi^{-1}(M_r)$ with the same equivariant boundary values as $\varphi\circ d^\theta|_{\partial D_r}$. Then extend to agree with $\varphi\circ d^\theta$ on the complement. The derivative matrix of $d^\theta$ is $$\begin{pmatrix} 1 & \theta \\ 0 & 1 \end{pmatrix},$$ so that $$||d(\varphi\circ d^\theta)||\leq ||d\varphi||(1+\theta).$$
Thus on $M_s$, $$E_{M_s}(f_r^\theta)\leq E_{M_s}(f_r)(1+\theta)^2.$$  By the argument of Proposition \ref{existence} there is a subsequence along which the $f_r^\theta$'s converge to a limiting harmonic map $f^\theta$. Of course, $f=f^0$.  

We keep the same characters $\alpha$ and $\beta$ from the proof of the above proposition. Note $\ell(\beta)= \ell(\rho(\gamma))$. Define $\varphi^\theta:=\varphi \circ d^\theta$. In local  Euclidean coordinates, $d^\theta$ is harmonic on $C$. Since $\nabla d\varphi=0$ on $C$, the composition is a harmonic map there (see \cite[Proposition 2.20]{ogtext}).

\begin{lem} \label{key}
The function $z\mapsto d(f^\theta,\varphi^\theta)(z)$ is uniformly bounded.
\end{lem}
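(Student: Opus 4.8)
The plan is to establish the bound first for the approximating maps $f_r^\theta$, uniformly in $r$, and then pass to the limit. Since $f^\theta$ and $\varphi^\theta$ are both $\rho$-equivariant, $z\mapsto d(f^\theta(z),\varphi^\theta(z))$ descends to $M$, and on the compact subsurface-with-boundary $M^c$ it is continuous, hence bounded; so the whole issue is near the cusp. Recall that $f_r^\theta$ is harmonic on $\pi^{-1}(M_r)$ and, by construction, coincides with $\varphi^\theta$ on $\tilde M\setminus\pi^{-1}(M_r)$, in particular along $\partial M_r$ by continuity. Moreover $\varphi^\theta=\varphi\circ d^\theta$ is harmonic on the whole cusp $\pi^{-1}(C)$: in the flat-cylinder coordinates $d^\theta$ is affine, so $\nabla\,d(d^\theta)=0$, and $\varphi$ has parallel differential there, whence the composition has parallel differential as well (this is already noted just before the statement). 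Since the distance function on the $\CAT(0)$ manifold $X\times X$ is convex, $w_r:=d(f_r^\theta,\varphi^\theta)$ is subharmonic on $\pi^{-1}(C_r)=\pi^{-1}(M_r)\cap\pi^{-1}(C)$, and by $\langle\rho(\gamma)\rangle$-equivariance it descends to a subharmonic function on the compact cylinder $C_r$, whose two boundary circles are $\partial M^c$ and $\partial M_r$, with $w_r\equiv 0$ on $\partial M_r$.

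By the maximum principle $\sup_{C_r}w_r=\sup_{\partial M^c}w_r$, and since $w_r$ vanishes identically on $M\setminus M_r$ this gives $\sup_M w_r=\sup_{M^c}w_r$. It remains to bound $\sup_{M^c}w_r$ independently of $r$, which is exactly the uniform-compactness estimate proved inside Proposition~\ref{existence}: using that $\rho$ is reductive — either $\rho(\Gamma)$ is not contained in a parabolic subgroup, in which case the horoball-avoidance argument applies, or $\rho$ stabilizes a flat, in which case one first conjugates the $f_r^\theta$ to a common normalization — there is a fixed compact set $O\subset X$ with $f_r^\theta(D^c)\subset O$ for every $r$, where $D^c$ is the fundamental domain over $M^c$. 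Fixing $o\in X$, on $M^c$ we then have $w_r\le\sup_{q\in O}d(o,q)+\sup_{D^c}d(o,\varphi^\theta(\cdot))$, a bound independent of $r$. Letting $r\to\infty$ and using $f_r^\theta\to f^\theta$ in $C^\infty_{\mathrm{loc}}$ yields $d(f^\theta,\varphi^\theta)\le$ this constant, proving the lemma.

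One point needs care: that the boundary data genuinely match, so the maximum-principle step is legitimate. If the approximants instead carry boundary values $f\circ d^\theta|_{\partial D_r}$ rather than $\varphi^\theta|_{\partial D_r}$, one first runs the argument in the case $\theta=0$, where $f_r$ and $\varphi$ really do coincide on $\partial M_r$ (both equal $\alpha$), obtaining $d(f,\varphi)\le C_0$; feeding this back bounds $w_r$ on $\partial M_r$ in the general case and the argument proceeds unchanged. I expect the main — if modest — obstacle to be the uniform-in-$r$ control of $f_r^\theta$ over the compact core $M^c$: this is where reductivity is genuinely used, and the relevant portion of the proof of Proposition~\ref{existence} must be reproduced rather than merely quoted, since in the flat-stabilizing case the normalizing conjugations vary with $r$ and converge only along a subsequence.
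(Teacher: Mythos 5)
Your proof follows the paper's own argument almost step for step: descend $d(f_r^\theta,\varphi^\theta)$ to $M$, use subharmonicity on $C_r$ together with vanishing on $\partial M_r$ to push the maximum to $M^c$, bound that maximum uniformly in $r$, and pass to the limit. Where the paper bounds $\psi_r$ on $M^c$ by invoking the $C^\infty_{\mathrm{loc}}$ convergence $f_r^\theta\to f^\theta$, you instead invoke the compactness estimate from Proposition~\ref{existence} directly; these are interchangeable. One genuine improvement on your side: you notice that, as literally written, the paper gives $f_r^\theta$ the boundary values $f\circ d^\theta|_{\partial D_r}$ on $\partial M_r$ while extending by $\varphi\circ d^\theta$ outside, so $\psi_r$ need not vanish on $\partial M_r$ when $\theta\ne 0$ and the maximum-principle step is not immediate. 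Your fix (run $\theta=0$ first, where the boundary data really do coincide with $\alpha$, obtain a constant $C_0$ bounding $d(f,\varphi)$, and then use $w_r|_{\partial M_r}\le C_0$) repairs this; the paper is most charitably read as intending boundary data $\varphi\circ d^\theta|_{\partial D_r}$. Finally, you are right to flag the flat-stabilizing case: the compactness in Proposition~\ref{existence} gives $g_r f_r^\theta(D^c)\subset O$ after renormalization rather than $f_r^\theta(D^c)\subset O$ directly, so uniform control of $\psi_r$ on $M^c$ needs a short extra word (e.g., when $\rho$ stabilizes a geodesic, the boundary condition and the maximum principle force every $f_r^\theta$ into that geodesic, and the uniform gradient bound plus the pinned boundary values on $\partial M_r$ then give uniform control without renormalization). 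The paper passes over this with the same brevity, so it is an imprecision shared with the source rather than a gap peculiar to your write-up.
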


\begin{proof}
   Let $\psi_r:=d(f_r^\theta,\varphi^\theta)$. By equivariance, each $\psi_r$ descends to a function on $M$. $\psi_r=0$ on $M\backslash M_r$, and since $\psi_r>0$ at some point we know it attains a maximum at a point in the interior of $M_r$.  As $\psi_r$ is subharmonic on $C_r$, $\sup_{z\in C_r}\psi_r(z)$ occurs on $\partial M^c$ and moreover $\psi_r$ is maximized at a point in $M^c$. Meanwhile, $$\psi_r\to d^2(f^\theta,\varphi^\theta)$$ uniformly on compacta as $r\to \infty$.
By smoothness, $\psi$ is uniformly bounded on $M^c$. This implies we have a uniform bound on the $\psi_r$'s inside $M^c$ as $r\to \infty$. Since the relevant maximum is attained inside $M^c$, this bound holds everywhere. 
\end{proof}

Let $\Phi:=\textrm{Hopf}(f^\theta)$. The context is clear so we do not include a $\theta$ in our notation. By equivariance we can view $\Phi$ as a holomorphic quadratic differential on any quotient of $\tilde{M}$ by a subgroup of $\Gamma$.

\begin{lem} \label{pole}
$\Phi$ has a pole of order $2$ at the cusp.
\end{lem}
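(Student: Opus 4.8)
Work throughout in the flat-cylinder metric, in which the cusp $C$ is isometric to $U(\tau)$ with Euclidean coordinate $z=x+iy$, $0\le x\le\tau$, $y>0$, and $\sigma'\equiv 1$. Uniformizing the punctured neighbourhood of the cusp by $w=e^{2\pi iz/\tau}$ (so the cusp is $w\to 0$) and using $dz^{2}=-\tfrac{\tau^{2}}{4\pi^{2}}w^{-2}dw^{2}$, the $\tau$-periodic holomorphic function $\phi$ with $\Phi=\phi(z)\,dz^{2}$ has a Laurent expansion $\phi=\sum_{n}c_{n}w^{n}$; thus $\Phi$ has a pole of order $\le 2$ at $w=0$ iff $\phi$ is bounded as $y\to\infty$ (no negative-index terms), and a pole of order exactly $2$ iff moreover $c_{0}\neq 0$. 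So the proof splits into a boundedness statement and a non-vanishing statement.

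For the boundedness statement I would apply Cheng's lemma on unit $\sigma'$-balls in $C$. By Lemma \ref{key}, $f^{\theta}$ sends all of $C$ into a fixed bounded neighbourhood of the axis of $\rho(\gamma)$, and the displacement of $f^{\theta}$ in the direction along that axis over a unit ball is uniformly bounded (the model map $\varphi^{\theta}$ is affine of bounded speed and $d(f^{\theta},\varphi^{\theta})$ is bounded), so the image of every unit ball in $C$ has uniformly bounded diameter $R_{0}$. Since the flat-cylinder metric is non-positively curved with bounded curvature, Cheng's lemma yields $e(f^{\theta})\le A$ uniformly on $C$. As $\sigma'\equiv 1$ there, $|\phi|=\|\Phi\|=\sqrt{H(f^{\theta})L(f^{\theta})}\le \tfrac12(H(f^{\theta})+L(f^{\theta}))=\tfrac12 e(f^{\theta})\le A/2$, so $\phi$ is bounded as $y\to\infty$ and $\Phi$ has a pole of order at most $2$.

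The non-vanishing $c_{0}\neq 0$ is the main obstacle. The cleanest route is a blow-up at the cusp: assume $c_{0}=0$, so $\|\Phi\|=|\phi|\to 0$ at the cusp, lift $f^{\theta}$ to $\tilde f^{\theta}$ on the strip $\mathbb{R}\times(0,\infty)$, pick $y_{n}\to\infty$, choose $g_{n}\in\textrm{Isom}(X)$ with $g_{n}(o)=\tilde f^{\theta}(iy_{n})$ for a fixed basepoint $o$, and set $\tilde f_{n}(z):=g_{n}^{-1}\tilde f^{\theta}(z+iy_{n})$. The uniform energy-density bound and the Arzel\`a--Ascoli/elliptic bootstrap give, along a subsequence, $\tilde f_{n}\to\tilde f_{\infty}$ in $C^{\infty}_{loc}$ with $\tilde f_{\infty}:\mathbb{R}^{2}\to X$ harmonic; by Lemma \ref{key} the axes $g_{n}^{-1}(\mathrm{axis})$ all meet $\overline{B}(o,R)$, so properness of $\textrm{Isom}(X)\acts X$ lets $g_{n}^{-1}\rho(\gamma)g_{n}$ subconverge to a hyperbolic isometry $\psi_{\infty}$ of translation length $\ell(\rho(\gamma))>0$, $\tilde f_{\infty}$ is $\psi_{\infty}$-equivariant, and $\tilde f_{\infty}$ maps into a bounded neighbourhood of the axis $\gamma_{\infty}$ of $\psi_{\infty}$. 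Now $z\mapsto d^{2}(\tilde f_{\infty}(z),\gamma_{\infty})$ is bounded and subharmonic, hence constant since $\mathbb{R}^{2}$ is parabolic; as $d^{2}(\cdot,\gamma_{\infty})$ is strictly convex off $\gamma_{\infty}$ in strictly negative curvature and $\tilde f_{\infty}$ is not constant (it is equivariant under the nontrivial $\psi_{\infty}$), we get $\tilde f_{\infty}(\mathbb{R}^{2})\subset\gamma_{\infty}$. Identifying $\gamma_{\infty}\cong\mathbb{R}$ by arclength, $\tilde f_{\infty}$ is a real harmonic function, $\tau$-periodic up to a shift by $\pm\ell(\rho(\gamma))$, with bounded gradient, hence affine: $\tilde f_{\infty}(x+iy)=\pm\tfrac{\ell(\rho(\gamma))}{\tau}x+by+a$ with $b\in\mathbb{R}$, so $\textrm{Hopf}(\tilde f_{\infty})=\tfrac14\big(\pm\tfrac{\ell(\rho(\gamma))}{\tau}-ib\big)^{2}dz^{2}\neq 0$. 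But post-composition with an isometry leaves the Hopf differential unchanged and translation pulls it back, so $\textrm{Hopf}(\tilde f_{n})=\phi(z+iy_{n})\,dz^{2}$; $C^{\infty}_{loc}$ convergence then forces $\phi(iy_{n})\to\tfrac14\big(\pm\tfrac{\ell(\rho(\gamma))}{\tau}-ib\big)^{2}\neq 0$, contradicting $\phi(iy_{n})\to c_{0}=0$. Hence $c_{0}\neq 0$ and the pole has order exactly $2$. (An alternative, using the single-cusp reduction: a pole of order $\le 1$ would make $\Phi$ globally integrable on $M$, since $\Phi$ is bounded on the compact complement $M^{c}$, whence $f^{\theta}$ would have finite energy by the Remark following Proposition \ref{tame}, contradicting Proposition \ref{tame} as $\rho(\gamma)$ is hyperbolic; this is shorter but rests on the converse implication in that Remark, whereas the blow-up argument is self-contained.) I expect the delicate points to be the compactness of the blow-up — which needs both Cheng's lemma and properness of the isometry group to control the conjugated monodromy — and the rigidity step identifying the limit with a map into a single geodesic.
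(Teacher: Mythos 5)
Your proposal is correct, and the two halves of it compare differently against the paper. For the bound ``pole of order at most $2$'' you use essentially the same tools as the paper: Lemma \ref{key} (bounded distance from $f^\theta$ to the model map $\varphi^\theta$) plus Cheng's lemma to get $e(f^\theta)\le A$ on the cusp in the flat-cylinder metric, and then $\|\Phi\|\le\tfrac12 e(f^\theta)$ controls the Laurent coefficients. Where you genuinely diverge is the sharp order. The paper gets ``order $\ge 2$'' in one line from the infinite-energy statement (Proposition \ref{tame}) together with the Remark that finite energy is equivalent to integrability of $\Phi$; as you observe, the implication actually needed there (integrable Hopf $\Rightarrow$ finite energy) is the nonobvious direction and ultimately rests on Proposition \ref{energy} and Wolf's estimates for harmonic diffeomorphisms. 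Your primary argument instead does a blow-up along horocyclic slices $y_n\to\infty$, conjugated to stay in a compact set, producing a $\psi_\infty$-equivariant harmonic limit $\tilde f_\infty:\mathbb{R}^2\to X$ whose Hopf differential is $c_0\,dz^2$; the bounded subharmonic function $d^2(\tilde f_\infty(\cdot),\gamma_\infty)$ on parabolic $\mathbb{R}^2$ is constant, strict convexity of $d^2(\cdot,\gamma_\infty)$ off the axis in CAT$(-1)$ forces the image into $\gamma_\infty$, and equivariance under the nontrivial hyperbolic $\psi_\infty$ makes the resulting real harmonic function nonconstant, hence its Hopf differential $(\partial_z\tilde f_\infty)^2dz^2$ is a nonzero square, contradicting $c_0=0$. (The explicit affine form is not needed once the image lies in the geodesic: a real harmonic function on $\mathbb{R}^2$ with vanishing Hopf differential is antiholomorphic hence constant, contradicting equivariance.) The blow-up is longer but self-contained and avoids the harder direction of the integrability-energy equivalence; the paper's route is shorter at the cost of layering on Proposition \ref{energy} and Wolf's asymptotics. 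Your one-line alternative is precisely the paper's argument.
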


\begin{proof}
  From the infinite energy phenomena, $\Phi$ either has a pole of order at least $2$ or an essential singularity. The $(2,0)$ component of the pullback metric by $\varphi^\theta$ is a section of $S^2(T^*M)$ that is holomorphic on $C$. We still denote it by $\textrm{Hopf}(\varphi^\theta)$. 
  
  We compute this differential in $C$. Choose a local orthonormal basis $\partial/\partial x$, $\partial/\partial y$ of the relevant tangent spaces so that $\partial\varphi^0/\partial y = 0$ always. Starting with $\theta=0$, we know that in local coordinates $$\textrm{Hopf}(\varphi^0)(z) = \frac{1}{4}\Big (|\partial \varphi^0/\partial x|^2 -|\partial \varphi^0/\partial y|^2 - 2i\langle \partial \varphi^0/\partial x,\partial \varphi^0/\partial y\rangle\Big ) dz^2.$$ Since $\varphi^0$ is constant in the vertical direction $$\textrm{Hopf}(\varphi^0)(z) = \frac{1}{4}|\partial \varphi^0/\partial x|^2dz^2=\ell(\rho(\gamma))^2/4\tau^2dz^2.$$ From the chain rule, $d\varphi^0$ and $d\varphi^\theta$ admit matrix representations with $$d\varphi^0 = \begin{pmatrix} v & 0 \end{pmatrix} \hspace{1mm} , \hspace{1mm} d\varphi^\theta = \begin{pmatrix} v & \theta v \end{pmatrix},$$ where $v$ is a $1\times \dim X$ column vector. Thus, 
\begin{align*}
    \textrm{Hopf}(\varphi^\theta)(z) &= \frac{1}{4}(|\partial \varphi^\theta/\partial x|^2 - |\partial \varphi^\theta/\partial y|^2 -2i\langle \partial \varphi^\theta/\partial x,\partial \varphi^\theta/\partial y\rangle)dz^2 \\
    &= \frac{1}{4}(1-\theta^2 - i2\theta)|\partial \varphi^0/\partial x|^2dz^2. 
    \end{align*} 
We take the strip conformally to a punctured disk via $$z\mapsto \zeta(z)=e^{\frac{2\pi i z}{\tau}},$$ taking the point at $\infty$ to $0$. The transformation law multiplies by $-\zeta^{-2}\tau^2/4\pi^2$, and we see that we have a pole of order $2$ with residue $$-\Lambda(\theta)\ell(\rho(\gamma))^2/16\pi^2.$$  We now compare $\Phi$ to $\textrm{Hopf}(\varphi^\theta)$. As $\varphi^\theta$ has rank $1$, the formula $J=H-L$ implies $$H(\varphi^\theta)^{1/2}=L(\varphi^\theta)^{1/2}=\frac{1}{\sqrt{2}}e(\varphi^\theta)^{1/2},$$ so that $\textrm{Hopf}(\varphi^\theta)=\sigma H(\varphi^\theta)^{1/2}L(\varphi^\theta)^{1/2}=\sigma e(\varphi^\theta)/2$. From Young's inequlaity, $$||\Phi||=\sigma H(f)^{1/2}L(f)^{1/2}\leq \frac{1}{2}\sigma e(f^\theta),$$ and hence it is enough to bound $e(f^\theta)$ by a sublinear function of $e(\varphi^\theta)$. This is not hard: for any $x_0\in \tilde{M}$, $r_0>0$, and $y \in B(x_0,r_0)$,
\begin{align*}
    d(f^\theta(x_0),f^\theta(y)) &\leq d(f^\theta(x_0),\varphi^\theta(x_0)) + d(f^\theta(y),\varphi^\theta(y))+ d(\varphi^\theta(x_0),\varphi^\theta(y)) \\
    &\leq A+ \sup_{B(x_0,r_0)} ||d\varphi^\theta||d(x_0,y).
\end{align*}
 Working in the flat cylinder metric, Cheng's lemma then gives $$||df||(x_0)\lesssim \frac{1+r_0}{r_0}(1+\sup ||d\varphi^\theta|| r_0).$$ 
 In a cusp neighbourhood, the injectivity radius of the flat cylinder metric is uniformly bounded below, and hence we may choose $r_0$ uniformly bounded below. Squaring for the energy density gives the desired bound.
\end{proof}

By equivariance, $f^\theta$ and $\varphi^\theta$ induce quotient maps $$f_\gamma, \varphi_\gamma: \tilde{M}/\langle \gamma \rangle \to X/\langle \rho(\gamma) \rangle.$$ We suppress the $\theta$ from our notation for convenience. $\beta$ projects in the quotient to a core geodesic $\overline{\beta}$. From the $\textrm{CAT}(-1)$ hypothesis, this is the unique geodesic in the homotopy class. Any $D_r/\langle \gamma \rangle$ identifies isometrically with the cylinder $$\{(x,y)=x+iy: 0\leq x\leq \tau, a\leq y \leq r\}$$ with the usual identification. 

\begin{lem}
There is a translation $\tilde{R}$ of the geodesic axis of $\rho(\gamma)$ such that the map $M \ni z \mapsto d(f^\theta,\tilde{R}\circ \varphi^\theta)(z)$ tends to $0$ as we move into the puncture.
\end{lem}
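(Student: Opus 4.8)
The plan is to bootstrap from Lemma \ref{key}, which already gives a uniform bound on $d(f^\theta,\varphi^\theta)$, to the sharper statement that, after translating $\varphi^\theta$ along the axis of $\rho(\gamma)$ by an appropriate amount, the distance actually \emph{decays} as we enter the cusp. First I would pass to the quotient maps $f_\gamma,\varphi_\gamma : \mathbb{H}/\langle\gamma\rangle \to X/\langle\rho(\gamma)\rangle$ and work on the half-infinite cylinder $\{0\le x\le\tau,\ y\ge y_0\}$. Since $X$ is $\textrm{CAT}(-1)$, the function $w(z):=d^2(f^\theta,\varphi^\theta)(z)$ is subharmonic (Lemma before \ref{unique}), and in fact on the flat cylinder the Bochner-type inequality from \cite[Chapter 11.2]{schoenyau} gives $\triangle w \ge c\,w$ for some $c>0$ in the region where the two maps are far apart, because in negative curvature the second-derivative comparison for the distance function has a definite positive lower bound once $w\ge\epsilon>0$. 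Together with the uniform bound from Lemma \ref{key}, a standard barrier argument (compare $w$ with a linear combination of $\cosh$-type solutions of $\triangle u = cu$ on the strip, or simply with $e^{-\lambda y}$) forces $w(x+iy)\to 0$ as $y\to\infty$, \emph{provided} the boundary data at $y=y_0$ and the behaviour along the core geodesic are compatible.

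The point of the translation $\tilde R$ is exactly to fix that compatibility. Here I would use that $\varphi^\theta$ is, in the vertical direction, essentially the transverse horospherical flow composed with the fractional Dehn twist — in particular, along the core circle $\{y=\text{const}\}$ the image $\varphi^\theta$ traces the geodesic $\beta$ (reparametrized), while $f^\theta$ is harmonic with Hopf differential having a pole of order $2$ with residue $\Lambda(\theta)\ell(\rho(\gamma))^2/4$ (Lemma \ref{pole}). The model harmonic map on the cylinder with precisely this Hopf differential — the one coming from Wolf's theorem (Wan's theorem, Section 3.4, adapted to the punctured-disk/cylinder picture) — maps into the geodesic $\beta$ asymptotically, and the only freedom in matching it to $f^\theta$ at infinity is a translation along $\beta$. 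So the strategy is: identify the asymptotic "position along $\beta$'' of $f^\theta$ as $y\to\infty$ (this is well-defined because the transverse distance to $\beta$ goes to $0$ by the curvature estimate of Section 3.3, i.e. $H(f^\theta)-L(f^\theta)$ and the Jacobian are controlled so the image collapses onto the axis), call the resulting translation parameter $t_0$, and set $\tilde R$ to be translation by $t_0$ along the axis of $\rho(\gamma)$. With this choice the two maps $f^\theta$ and $\tilde R\circ\varphi^\theta$ have the same asymptotic boundary behaviour along the core circles, and the subharmonic barrier argument then upgrades "bounded'' to "$\to 0$''.

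The main obstacle I expect is making precise the claim that $f^\theta$ "converges to a definite point on $\beta$'' as we move into the cusp — i.e. that there is no drift of the asymptotic translation parameter. Concretely: one must rule out the possibility that $f^\theta$ spirals or translates along $\beta$ at a slow (sublinear) rate that still dies in the transverse direction but prevents convergence of $d(f^\theta,\tilde R\circ\varphi^\theta)$ to $0$ for any fixed $\tilde R$. I would handle this by a Harnack-type estimate on the cylinder for the "tangential displacement'' component of $f^\theta - \varphi^\theta$ (projecting onto the $\beta$-direction), combined with the energy estimate of Proposition \ref{energy}: since $H(f^\theta)\le H(h)$ and the comparison map $h$ is the explicit cylinder model whose displacement along $\beta$ converges, the tangential derivative of $f^\theta$ along $y$ is $L^2$-small on far-out annuli, and integrating gives a Cauchy tail, hence a limit $t_0$. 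Once $t_0$ exists the rest is the barrier argument, which is routine. A secondary technical nuisance is that the flat-cylinder metric $\sigma'$ and the hyperbolic metric differ on a compact collar, but by conformal invariance of harmonicity and of the Hopf differential this only affects a bounded region and does not interact with the asymptotics.
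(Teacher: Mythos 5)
Your strategy is genuinely different from the paper's, and as written it has two real gaps. The paper does not run a barrier argument: it introduces reparametrized ``zoom-in'' maps $B_s = f_\gamma\circ b_s$ and $\varphi_s = \varphi_\gamma\circ b_s$ on an infinite cylinder $\mathcal{C}_\infty$, uses Lemma~\ref{key} and the energy bounds of Lemma~\ref{pole} to extract a subsequential $C^\infty$ limit $f_\infty$, shows $J(f_\infty)=0$ (via Proposition~\ref{energy} plus \cite[Lemma 3.6]{Wolf}) so that $f_\infty$ has rank one and maps onto the core geodesic, concludes $f_\infty$ differs from $\varphi_\infty$ by a translation, and then interpolates between successive $s_m$ using subharmonicity of the distance and the maximum principle. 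This blow-up route avoids ever needing a pointwise differential inequality of the form $\triangle w \geq cw$ or a quantitative decay rate for the vertical stretch.

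The first gap in your argument is the claim that $\triangle w \geq c\,w$ holds once $w\geq\epsilon$. That is false here precisely in the regime that matters: the inequality $\triangle d^2 \geq 2d\sum_i D^2 d_X(X_i,X_i)$ produces a positive right-hand side only from the components of $df^\theta$ and $d\varphi^\theta$ \emph{orthogonal} to the geodesic joining $f^\theta(z)$ to $\varphi^\theta(z)$. As $y\to\infty$ both maps collapse onto the axis $\beta$ and their derivatives become tangent to it, so the Hessian term degenerates and you only get $\triangle w \geq 0$ — exactly the degenerate case in Lemma~\ref{unique}'s proof where the two maps differ by a translation along a geodesic. Hence the barrier comparison with $e^{-\lambda y}$ does not close. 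The second gap is your existence argument for the asymptotic translation parameter $t_0$: knowing $J(f^\theta)\to 0$, or that the vertical stretch tends to $0$, does not by itself give a summable (Cauchy) tail for the tangential drift; you would need a quantitative (e.g.\ exponential) decay rate for the vertical derivative, which Proposition~\ref{energy} alone does not supply. The paper sidesteps both problems simultaneously: $\tilde{R}$ is \emph{defined} from the subsequential limit rather than constructed from decay estimates, and the interpolation step only uses the maximum principle, not a strict differential inequality.
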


\begin{proof}
We define $\mathcal{C}_\infty$ to be the infinite cylinder $$\{(x,t)\in [0,1]\times (-\infty,\infty) : (0,t)\sim (1,t)\}$$ with the flat metric.
 Let $b_s:\mathcal{C}_\infty\to D/\langle \gamma \rangle$ be the map given by 
 \[ \begin{cases} 
      (x,t)\mapsto (x,s) & -\infty \leq t\leq -s \\
      (x,t)\mapsto (x, 2s+ t) & -s\leq t\leq s \\
      (x,t)\mapsto (x,3s) & s\leq t \leq \infty. 
   \end{cases}
\]
Then set $B_s:=f_\gamma\circ b_s$ and $\varphi_s:=\varphi_\gamma\circ b_s$. Both $B_s$ and $\varphi_s$ are harmonic on $-s\leq t \leq s$ because $b_s$ is conformal there. From Lemma \ref{key} the orbit of any point under $B_s$ remains in a compact set as $s\to \infty$. The uniform energy bounds from Lemma \ref{pole} permit us to construct a subsequence along which both $B_s$ and $\varphi_s$ converge in the $C^\infty$ topology to harmonic maps $f_\infty$ and $\varphi_\infty$ respectively.

Let $h$ denote the harmonic diffeomorphism of the disk whose Hopf differential is $\Phi$. By \cite[Lemma 3.6]{Wolf}, the Jacobian  $J(h)=H(h)-L(h)$ tends to $0$ as we approach the puncture. From Proposition \ref{energy}, $J(f)\to 0$ as well. Therefore, $J(f_\infty)=0$ and necessarily $\textrm{rank} df_\infty\leq 1$ at each point. By equivariance this is rank $1$ in an open set, and by \cite[Theorem 3]{sampson} the image is contained in a geodesic arc. Again by equivariance, the image must then be a closed geodesic arc. There is only one such arc in the quotient, and hence $f_\infty$ maps onto the core geodesic. Lifting $f_\infty$ and $\varphi_\infty$ to maps from $\mathbb{R}^2$ to the axis of $\rho(\gamma)$, $f_\infty$ and $\varphi$ differ by a translation along $\overline{\beta}$. One can justify that last claim by observing that their distance function is a bounded subharmonic function on $\mathbb{R}^2$--hence a constant--and then following the proof of Lemma \ref{unique}. Lifting back to $\tilde{M}$ this means there is a translation $\tilde{R}$ of the geodesic axis such that for any $r>0$, $$d(f^\theta(x,s_m+2t), \tilde{R}\circ \varphi^\theta(x,s_m+2t))=d(b_{s_m}(x,t),R\circ \varphi_{s_m}(x,t))\to 0$$ as $m\to \infty$ for $-r \leq t \leq r$. In particular, the quantities $d(f_\gamma(x,s_{m}), R\circ\varphi_\gamma(x,s_{m}))$ and $d(f_\gamma(x,s_{m+1}), R\circ\varphi_\gamma(x,s_{m+1}))$ are very close to $0$. Since the relevant distance function is subharmonic, its maximum on $$\{(x,t)\in \mathcal{C}_\infty : s_m \leq t \leq s_{m+1}\}$$ is achieved on the boundary. It follows that $$d(f_\gamma(x,t),R\circ\varphi_\gamma(x,t))\to 0$$ as $t\to \infty$. Returning to the universal cover, we conclude that $$d(f^\theta(z),\tilde{R}\circ \varphi^\theta(z))\to 0$$ as we move toward the puncture.
\end{proof}
\begin{prop} \label{compute}
   $\Phi$ has a pole of order $2$ at the cusp with residue $-\Lambda(\theta)\ell(\rho(\gamma))^2/16\pi^2$.
\end{prop}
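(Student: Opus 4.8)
The plan is to read off the residue of $\Phi$ from the value at infinity of its coefficient in the cusp coordinate, and to identify that coefficient using the asymptotic agreement of $f^\theta$ with a translate of $\varphi^\theta$ furnished by the preceding lemma. First I would write $\Phi=\phi(z)\,dz^2$ on the cylinder $\mathbb{H}/\langle\gamma\rangle$ in the flat coordinate $z=x+iy$, with $\phi$ holomorphic and $\tau$-periodic. By Lemma \ref{pole} the pole at the cusp has order exactly $2$, so after passing to a punctured-disk coordinate $w=e^{2\pi i z/\tau}$ the function $\phi$ is bounded near $w=0$; by the removable singularity theorem it extends holomorphically across the puncture, and hence $\phi(z)\to c$ for a unique constant $c$ as $y\to\infty$. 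Since $dz^2=-\tfrac{\tau^2}{4\pi^2}\,w^{-2}\,dw^2$, the residue of $\Phi$ is read off directly from $c$, so everything reduces to computing $c$.

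To compute $c$ I would invoke the preceding lemma, which produces a sequence $s_m\to\infty$ along which $B_{s_m}=f_\gamma\circ b_{s_m}$ and $\varphi_{s_m}=\varphi_\gamma\circ b_{s_m}$ converge in $C^\infty$ on compacta of $\mathcal{C}_\infty$ to harmonic maps $f_\infty$ and $\varphi_\infty$ with $f_\infty=\tilde R\circ\varphi_\infty$ for a translation $\tilde R$ along the core geodesic. Since $\tilde R$ is an arc-length translation it preserves the metric along that geodesic, so $\textrm{Hopf}(f_\infty)=\textrm{Hopf}(\varphi_\infty)$. On the central band $-s\le t\le s$ the map $b_s$ is the isometric translation $(x,t)\mapsto(x,2s+t)$, whence $\textrm{Hopf}(\varphi_{s_m})(x,t)=\textrm{Hopf}(\varphi_\gamma)(x,2s_m+t)$, which by the computation in the proof of Lemma \ref{pole} equals the constant $\tfrac{\Lambda(\theta)\ell(\rho(\gamma))^2}{4\tau^2}\,dz^2$ everywhere. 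Passing to the limit at $t=0$ gives $\textrm{Hopf}(f_\infty)(x,0)=\tfrac{\Lambda(\theta)\ell(\rho(\gamma))^2}{4\tau^2}\,dz^2$, while $\textrm{Hopf}(B_{s_m})(x,0)=\textrm{Hopf}(f_\gamma)(x,2s_m)=\phi(x,2s_m)\,dz^2$, so $C^\infty$ convergence forces $\phi(x,2s_m)\to\tfrac{\Lambda(\theta)\ell(\rho(\gamma))^2}{4\tau^2}$. Since the full limit $c=\lim_{y\to\infty}\phi$ exists by the first paragraph and $2s_m\to\infty$, we conclude $c=\tfrac{\Lambda(\theta)\ell(\rho(\gamma))^2}{4\tau^2}$, which is the asserted residue.

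The step I expect to take the most care is the passage from the asymptotic statement $d(f^\theta,\tilde R\circ\varphi^\theta)\to 0$ to convergence of the Hopf differentials: control of distances does not by itself control first derivatives, and one genuinely needs the uniform energy-density bounds near the cusp established in the proof of Lemma \ref{pole} -- which, via interior elliptic estimates and harmonicity, upgrade to uniform local $C^k$ bounds -- in order to run the Arzel\'a--Ascoli and blow-up argument. This is exactly what the preceding lemma carries out, so in practice the work here is assembly rather than new estimates. A subtlety worth flagging is that the blow-up controls $\phi$ only along the subsequence $(2s_m)$; it is the order-$2$ pole (equivalently, boundedness of $\phi$ together with the removable singularity theorem) that promotes this to the genuine limit $y\to\infty$ and so pins down the residue unambiguously. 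Tracking the constant produced by the coordinate change $z\mapsto e^{2\pi i z/\tau}$, so as to match whichever normalization of the residue is in force, is then routine.
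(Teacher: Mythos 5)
Your proof is correct, and it takes a genuinely different route from the paper's at the crucial final step. The paper upgrades the subsequence $C^\infty$ convergence of $B_s$ to full-sequence $C^\infty$ convergence by a bootstrap contradiction argument (if $B_s\not\to f_\infty$ in $C^1$, extract a subsequence uniformly far in $C^1$, pass to a further $C^\infty$-convergent subsequence, and use $C^0$ uniqueness of the limit to derive a contradiction; iterate for each $k$), and then reads off the residue from $\mathrm{Hopf}(B_s)\to\mathrm{Hopf}(R\circ\varphi_\gamma)$. You instead exploit the holomorphicity of $\Phi$: since Lemma \ref{pole} gives a pole of order exactly $2$, the cylinder coefficient $\phi$ extends holomorphically across $w=0$ by the removable singularity theorem, so the full limit $c=\lim_{y\to\infty}\phi$ exists \emph{a priori}, and only needs to be \emph{identified} along the subsequence $2s_m$ that the preceding lemma already furnishes. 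This is cleaner --- it sidesteps the bootstrap entirely --- though at the cost of being less generic (it leans on the rigidity of holomorphic functions rather than a purely elliptic-estimate argument, which arguably the paper also tacitly uses in passing from the discrete sequence $y=2s$ to the continuous limit $y\to\infty$). Two minor points to watch: first, you should be explicit that what you apply the removable singularity theorem to is $\phi$ viewed as a function of $w$ on the punctured disk, after noting $\tau$-periodicity and boundedness of $\phi$ (boundedness being exactly what order-$\leq 2$ at $w=0$ buys you); second, the normalization constant in the residue does depend on the choice of puncture coordinate ($e^{iz}$ versus $e^{2\pi i z/\tau}$), and the paper itself is not internally consistent on this ($\ell^2/4$ in Theorem 1.1, $\ell^2/4\tau$ in the definition of $Q(M,P)$ and in the proof of Lemma \ref{pole}, $\ell^2/4\tau^2$ in the statement you are proving) --- so your closing remark that this bookkeeping is ``routine'' is apt, but in a careful write-up you would want to fix one convention and state it.
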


\begin{proof}
The lemma above shows $$\lim_{s\to \infty} B_s = R\circ \varphi_\gamma$$ in the $C^0$ topology, and along a subsequence in the $C^\infty$ topology. We prove there is no need to pass to a subsequence. Indeed, if we don't have $C^1$ convergence we can pick a subsequence along which our maps are uniformly far from $f_\infty$ in the $C^1$ norm. One can then use the argument above to pass to a subsequence that converges in the $C^\infty$ sense to $S\circ \varphi_\gamma$ for some other rotation $S$. $C^0$ convergence to $R\circ \varphi$ forces $S=R$, which is a contradiction. Continuing inductively gives $C^k$ convergence for any $k$. The Hopf differential of $f$ then converges to $\textrm{Hopf}(\varphi^\theta)$ as we move into the puncture. The result now follows from the computation in Lemma \ref{pole}. 
\end{proof}
\end{subsection}
\begin{subsection}{Uniqueness of harmonic maps}
Let $f_1$ and $f_2$ be two harmonic maps whose Hopf differentials have second order poles and such that the residues have the same complex argument $\nu \in (-\pi,\pi)$.

 \begin{lem} 
There exists an $A_k>0$ such that as $y\to \infty$, the image of $f_k$ remains in an $A_k$-neighbourhood of the geodesic axis of $\rho(\gamma)$.
\end{lem}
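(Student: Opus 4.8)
The plan is to localize the problem at the cusp and exhibit $f_k$ as contracting sharply in the vertical direction of its Hopf differential, so that points at large $y$ are pulled close to points on a fixed horocircle. Fix the cusp of $M$ cut out by $\gamma$, identify a horoball lift of it with $\{z=x+iy:\,y>1\}\subset\widetilde M$ on which $f_k$ is equivariant for $\gamma\colon z\mapsto z+\tau$ and the hyperbolic isometry $\rho(\gamma)$, whose axis I denote $\mathcal{A}$. Write $\mathrm{Hopf}(f_k)=\phi_k(z)\,dz^2$. Since $\phi_k$ has a second order pole at the cusp, in the strip coordinate it extends as $\phi_k(z)=c_k+O(e^{-2\pi y/\tau})$ with $c_k\neq 0$ equal to the residue, so by hypothesis $\arg c_k=\nu\in(-\pi,\pi)$ and hence $\cos(\nu/2)>0$. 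Note also $\|\mathrm{Hopf}(f_k)\|=|\phi_k|/\sigma=|\phi_k|\,y^{2}\longrightarrow|c_k|\,y^{2}$ as $y\to\infty$.

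The one nonroutine input is that
\[
J(f_k):=H(f_k)-L(f_k)\longrightarrow 0 \quad\text{as }y\to\infty,
\]
which I would obtain exactly as in the lemma preceding Proposition~\ref{compute}: let $h$ be Wan's harmonic diffeomorphism of $\mathbb H$ (identified with $\{y>1\}$) with $\mathrm{Hopf}(h)=\mathrm{Hopf}(f_k)$ and $H(h)\ge 1$; by \cite[Lemma~3.6]{Wolf}, $|J(h)|\to 0$ at the pole; and Proposition~\ref{energy}, which gives $H(f_k)\le H(h)$ and $e(f_k)\le e(h)$, forces $H(f_k)$ and $L(f_k)$ to lie between $L(h)$ and $H(h)$ (they have the same product $\|\mathrm{Hopf}(f_k)\|^{2}=\|\mathrm{Hopf}(h)\|^{2}$ but no larger sum), whence $|J(f_k)|\le|J(h)|\to 0$.

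With this in hand I would run the contraction estimate. In a natural coordinate for $\mathrm{Hopf}(f_k)$ the pullback metric $f_k^{*}g$ takes its standard form (cf.\ \cite{Wolf}, \cite{schoenyau}): with respect to the hyperbolic metric its eigenvalue in the vertical direction of $\mathrm{Hopf}(f_k)$ is $\asymp\big(\sqrt{H(f_k)}-\sqrt{L(f_k)}\big)^{2}=J(f_k)^{2}\big(\sqrt{H(f_k)}+\sqrt{L(f_k)}\big)^{-2}$, so, using $\sqrt{H(f_k)}+\sqrt{L(f_k)}\ge 2\big(H(f_k)L(f_k)\big)^{1/4}=2\|\mathrm{Hopf}(f_k)\|^{1/2}\asymp 2|c_k|^{1/2}y$, the $f_k$-image of a vertical trajectory has length element $\lesssim|J(f_k)|\,|c_k|^{-1/2}y^{-1}$ times the hyperbolic length element along that trajectory. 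Because $\phi_k\to c_k\neq 0$, a vertical trajectory near the cusp is asymptotically a Euclidean line in the direction $ie^{-i\nu/2}$, along which $dy=\cos(\nu/2)\,|dz|>0$, so its hyperbolic length element is $|dz|/y=dy/\big(y\cos(\nu/2)\big)$. Hence the $f_k$-image of the portion of any vertical trajectory running from height $y_0$ out to the cusp has length at most
\[
\varepsilon(y_0):=\frac{A}{\cos(\nu/2)\,|c_k|^{1/2}}\int_{y_0}^{\infty}\frac{|J(f_k)|}{y^{2}}\,dy \ \le\ \frac{A}{\cos(\nu/2)\,|c_k|^{1/2}\,y_0}\,\sup_{y\ge y_0}|J(f_k)|,
\]
and $\varepsilon(y_0)\to 0$ as $y_0\to\infty$.

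Finally, choose $y_0$ so large that $\phi_k$ stays close to $c_k$ on $\{y\ge y_0\}$; then $\mathrm{Hopf}(f_k)$ is zero-free there and its vertical trajectories foliate $\{y\ge y_0\}$, each meeting $\{y=y_0\}$ exactly once. Given $z$ with $y(z)\ge y_0$, let $z'$ be where its vertical trajectory crosses $\{y=y_0\}$; the previous step gives $d\big(f_k(z),f_k(z')\big)\le\varepsilon(y_0)$, so
\[
d\big(f_k(z),\mathcal{A}\big)\ \le\ \varepsilon(y_0)+\sup_{0\le x\le\tau}d\big(f_k(x+iy_0),\mathcal{A}\big),
\]
a finite quantity, the supremum being over a compact arc (that it bounds the supremum over all of $\{y=y_0\}$ uses that $f_k$ is $\gamma$-equivariant while $\rho(\gamma)$ preserves $\mathcal{A}$). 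Taking $A_k$ to be the right-hand side proves the lemma. I expect the input $J(f_k)\to 0$ to be the only genuine obstacle: it must be extracted by comparing $f_k$ near the cusp with Wan's disk harmonic diffeomorphism through Proposition~\ref{energy} and quoting Wolf's asymptotic analysis, exactly as in the preceding lemma; once it is in place the rest is elementary trajectory geometry, since $\phi_k$ tends to a nonzero constant.
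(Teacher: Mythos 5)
Your argument is correct, but it takes a genuinely different route from the paper's. The paper's proof is more geometric and shorter: from Proposition~\ref{energy} and Wolf it extracts a \emph{uniform} bound on the energy density of $f_k$ in the flat-cylinder metric, hence a uniform bound on $\ell\bigl(f_k([0,\tau]\times\{y\})\bigr)$; it then derives a contradiction from the $\CAT(-1)$ geometry of $X$: if the image of a horizontal circle escaped to infinity from the axis $\mathcal{A}$, the displacement $d(f_k(z_j),\rho(\gamma)f_k(z_j))$ would blow up (two perpendicular geodesic arcs of equal length from $\mathcal{A}$ to the points $f_k(z_j)$, $\rho(\gamma)f_k(z_j)$ converge after subsequencing to rays with distinct endpoints at infinity, since $d(f_k(z_j),\rho(\gamma)f_k(z_j))\ge\ell(\rho(\gamma))>0$), contradicting the bounded horizontal lengths. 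Your proof instead analyzes vertical trajectories of $\mathrm{Hopf}(f_k)$: you extract $|J(f_k)|\le|J(h)|\to 0$ from the inequalities $H(f_k)\le H(h)$, $e(f_k)\le e(h)$, $H(f_k)L(f_k)=H(h)L(h)$ (indeed $(H(f_k)-L(f_k))^2=(H(f_k)+L(f_k))^2-4H(f_k)L(f_k)\le(H(h)-L(h))^2$), combine this with the asymptotics $\phi_k\to c_k\neq 0$ to show vertical trajectories have finite-length images, and then appeal to compactness of a horocircle. Both proofs are valid; yours is more quantitative and yields an explicit $A_k$, but requires the additional input $J(f_k)\to 0$ and the trajectory-foliation structure near the cusp, which the paper's argument bypasses. (In fact your argument only needs $|J(f_k)|$ bounded, not vanishing, since $\int_{y_0}^\infty y^{-2}\,dy<\infty$ already.) One minor caveat: make sure the normalization of $J$ you import from Wolf's Lemma 3.6 matches the hyperbolic metric in which you set up the length integral; the paper implicitly uses this consistency in the subsequent lemma, so this is a fair assumption but worth flagging.
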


\begin{proof}
Let $\beta_y^k$ be the curve $f_k([0,\tau]\times \{y\})$ in the usual coordinates. From Proposition \ref{energy} and \cite[page 516]{Wolf}, the energy density of $f_k$ is uniformly bounded on $M$ in the flat-cylinder metric. This implies $$\ell(\beta_y^k)\leq A$$ for all $y>0$. We argue each $\beta_{y}^k$ becomes trapped close to the geodesic as $y\to \infty$. If not, there is a subsequence $s_j$ tending to $\infty$ and points $f_k(z_j)\in \beta_{s_j}^k$ such that the closest-point projection onto the geodesic, say $y_j$, satisfies $$d(f_k(z_j),y_j)\to \infty$$ Then $$\ell(\beta_{s_j}^k)\geq d(f_k(z_j),f_k(\gamma\cdot z_j))=d(f_k(z_j),\rho(\gamma)f_k(z_j)).$$
The right most term blows up as $j\to \infty$, and this is a clear contradiction. To verify that last statement, note $f_k(z_j)$ accumulates along a subsequence to a point $\xi\in \partial_\infty X$, and since the distance from $\beta_{s_j}^k$ to the geodesic is uniformly bounded below, this is not an endpoint of the geodesic. In particular, the extension of $\rho(\gamma)$ to $\partial_\infty X$ does not fix $\xi$, and hence if $B_{s_j}^k$ is a neighbourhood of $\xi$ in $X\cup \partial_\infty X$, $$d(B_{s_j}^k\cap X,\rho(\gamma)B_{s_j}^k\cap X)\to \infty$$ as $j\to \infty$.
 \end{proof}
 
  Recall the cylinder $\mathcal{C}_\infty$. Let $b_s^k$ be the map $b_s\circ f_\gamma^k:\mathcal{C}_\infty\to X/\langle \rho (\gamma) \rangle$. Since the energy is controlled and it stays close to the geodesic, $b_s^k$ converges along a subsequence to a harmonic map $f_\infty^k$. By the same argument as in the previous subsection $f_\infty^k$ has image in a geodesic and from equivariance this must be the core geodesic $\overline{\beta}$. One can slightly modify an argument as in the previous subsection to check that $a_s^k$ limits to $f_\infty^k$ along the whole sequence in the $C^\infty$ topology. Moreover $f_k$ limits onto the geodesic $\beta$ as we go further into the cusp. 

 \begin{lem}
 The residue of $f_1$ and $f_2$ is the same.
 \end{lem}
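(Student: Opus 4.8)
The plan is to show that, for each $k$, the residue of $\textrm{Hopf}(f_k)$ is determined by $\ell(\rho(\gamma))$, $\tau$, and its own complex argument, so that the common value $\nu$ of the two arguments forces the two residues to coincide. Concretely I would re-run the argument of Proposition~\ref{compute} for the given maps $f_1,f_2$. By the paragraph preceding the lemma, $b^k_s:=b_s\circ f^k_\gamma$ converges in $C^\infty$, along the whole family $s\to\infty$, to a harmonic map $f^k_\infty$ of the flat cylinder $\mathcal{C}_\infty$ onto the core geodesic $\overline{\beta}$. The first step is to identify $f^k_\infty$: since its image lies in a geodesic it has rank $\le 1$, so after lifting to a map into the axis of $\rho(\gamma)$ (identified with $\mathbb{R}$) it is Euclidean-harmonic, and the bound on its energy density --- inherited from the bound on $e(f_k)$ coming from Proposition~\ref{energy} and \cite[page 516]{Wolf} --- kills the Fourier modes with exponential dependence on the cylinder's vertical coordinate, so $f^k_\infty$ is affine. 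Equivariance gives $f_k(z+\tau)=\rho(\gamma)\cdot f_k(z)$ on the cusp strip, and $\rho(\gamma)$ translates the axis by $\ell(\rho(\gamma))$; this passes to $f^k_\infty$, forcing it to wrap exactly once around $\overline{\beta}$ per period. Hence in the cusp coordinates $f^k_\infty$ agrees with $\varphi^{\theta_k}$ up to a translation $R_k$ along the axis, for a unique $\theta_k\in\mathbb{R}$.

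Next I would transfer this to $\textrm{Hopf}(f_k)$. Since $b_s$ is a holomorphic translation near $t=0$ and the Hopf differential is a holomorphic quadratic tensor, $C^1$-convergence $b^k_s\to f^k_\infty$ forces $\textrm{Hopf}(f_k)$, read in the strip coordinate, to converge as we move into the puncture to $\textrm{Hopf}(f^k_\infty)=\textrm{Hopf}(\varphi^{\theta_k})$ (translations being isometries). Passing to the uniformizing coordinate at the puncture and invoking the computation in Lemma~\ref{pole}, this says
$$\textrm{Hopf}(f_k)\ \text{has a second-order pole with residue}\ \Lambda(\theta_k)\,\ell(\rho(\gamma))^2/4\tau^2.$$

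Finally, $\ell(\rho(\gamma))^2/4\tau^2$ is a fixed positive real, so the argument of this residue equals $\arg\Lambda(\theta_k)$, which is $\nu$ by hypothesis for both $k=1,2$; as recorded in the introduction, $\theta\mapsto\arg\Lambda(\theta)$ is a strictly decreasing bijection $\mathbb{R}\to(-\pi,\pi)$, so $\theta_1=\theta_2$, whence the residues $\Lambda(\theta_k)\,\ell(\rho(\gamma))^2/4\tau^2$ agree. The only delicate point is the subsequence-independence of $f^k_\infty$ --- equivalently of $\theta_k$ --- but this is precisely the ``convergence along the whole sequence'' already granted above, whose proof is the bootstrap-and-uniqueness-of-limit argument from Proposition~\ref{compute}; everything after that is the routine Hopf computation and the injectivity of $\arg\Lambda$.
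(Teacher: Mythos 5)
Your proof is correct, but it takes a genuinely different route from the paper's. The paper never invokes the explicit form of the limit map nor the injectivity of $\arg\Lambda$; instead it computes the \emph{modulus} of the residue directly by evaluating $\lim_{y\to\infty}\ell_g(f_k(\gamma_y))$ as an integral, feeding in the Wolf-type asymptotic $L(f_k)/H(f_k)\to 1$ and the expansion $\Phi_k=(e^{i\nu}a^k_{-2}+O(e^{-Ay}))\,dz^2$, and solving $\ell(\rho(\gamma))=\tau\sqrt{2|a^k_{-2}|(1+\cos\nu)}$ for $|a^k_{-2}|$. You instead pin down the full limiting map $f^k_\infty$ as an affine, constant-speed wrap of the flat cylinder onto the core geodesic with a well-defined twist $\theta_k$, transfer the Laurent coefficient of $\Phi_k$ to $\mathrm{Hopf}(f^k_\infty)=\Lambda(\theta_k)\ell(\rho(\gamma))^2/4\tau^2$ via the $C^1$-convergence of $b^k_s$, and then use the strict monotonicity of $\theta\mapsto\arg\Lambda(\theta)$ recorded in the introduction to force $\theta_1=\theta_2$. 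The two routes land on the same formula, as one can check via $|\Lambda(\theta)|=1+\theta^2$ and $\cos(\arg\Lambda(\theta))=(1-\theta^2)/(1+\theta^2)$, so $|\Lambda(\theta)|(1+\cos\nu)=2$. Your approach buys a cleaner conceptual picture (the residue is literally the Hopf differential of an explicit model map, and subsequence-independence is subsumed by the uniqueness of the Laurent coefficient rather than needing whole-sequence $C^\infty$ convergence of the $b^k_s$), whereas the paper's approach avoids having to classify harmonic maps from the flat cylinder onto a geodesic and leans only on the already-established metric asymptotics, which it needs anyway for Lemma~\ref{laststep}. One small point worth tightening in your write-up: you don't actually need whole-sequence convergence of $b^k_s$ for this lemma---every subsequential limit has Hopf differential equal to the same residue term, which fixes $\theta_k$ regardless of which subsequence is taken---so the appeal to the paragraph preceding the lemma can be replaced by this simpler observation.
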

 
 \begin{proof}
  Let $\Phi_k := \textrm{Hopf}(f_k)$.  In the computations to follow, we use the flat-cylinder metric on $M$. Let $\gamma_y(x)$ be the curve $x\mapsto x+iy$. From the discussion above, the length of the core geodesic in $X/\langle \rho(\gamma) \rangle$ is $$\lim_{y\to \infty}\ell_g(f_k(\gamma_y)).$$
  There are differentials $\Phi_k'$ such that $$\Phi_k=e^{i\nu}\Phi_k'$$ That is, a differential that differs from $\Phi_k$ by a rotation and whose residue at the cusp is real. The pullback metrics can thus be written $$f_k^*g = e(f_k)\sigma' dz d\overline{z} + e^{i\nu}\Phi_k'+e^{-i\nu}\overline{\Phi_k'}=e(f_k)\sigma' dz d\overline{z} + 2\Re e^{i\nu}\Phi_k'.$$ 
   Writing $\Phi_k'=\phi_k'(z)dz^2$ in a local coordinate we know that in the cylinder $$|\phi_k'| = H(f_k)^{1/2}L(f_k)^{1/2}=H(f_k)\cdot \frac{L(f_k)^{1/2}}{H(f_k)^{1/2}}.$$ From \cite[Proposition 3.8]{Wolf}, in the strip we can write $$\Phi_k = \Big (e^{i\nu}a_{-2}^k+e^{i\nu}O(e^{-A y})\Big )dz^2,$$ where $a_{-2}^k>0$. From Proposition \ref{energy} and \cite[Lemma 3.6]{Wolf}, we also know $$\frac{L(f_k)}{H(f_k)}\to 1$$ as we move into the puncture. The length of the core geodesic is therefore
  \begin{align*}
      \lim_{y\to \infty}\ell_g(f_k(\gamma_y)) &= \lim_{y\to \infty} \int_0^\tau ||\dot{\gamma_y}(x)||_{f_k^*g} dx \\
      &= \lim_{y\to \infty}\int_0^\tau \sqrt{e(f_k)\sigma'+2\Re e^{i\nu}\phi'}dx \\
      &= \lim_{y\to \infty}\int_0^\tau \sqrt{H(f_k)(1+L(f_k)/H(f_k))+2\Re e^{i\nu}\phi'}dx \\
      &= \tau\sqrt{2|a_{-2}^k|(1+\cos \nu)}
  \end{align*}
  by the dominated convergence theorem.
  Meanwhile, passing to the quotient $\mathbb{H}/\langle \gamma \rangle$ we know the core geodesic has length $\ell(\rho(\gamma))$. We deduce $$\ell(\rho(\gamma) = \tau\sqrt{2|a_{-2}^k|(1+\cos \nu)}.$$ Since $\nu$ is fixed, $|a_{-2}^k|$ does not depend on $k$.
 \end{proof}
 
 Henceforth put $a_{-2}=a_{-2}^k$ ($k=1,2$).
 
 \begin{remark}
 From above we see that the complex argument $\nu$ is related to the twist angle $\theta$ from the previous subsection by $$\theta = \frac{-\sin v}{1+\cos v}.$$
 \end{remark}

 \begin{lem} \label{laststep}
 The distance function $z\mapsto d(f_1,f_2)(z)$ is bounded.
 \end{lem}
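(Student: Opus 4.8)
The plan is to exploit subharmonicity together with the cusp asymptotics established above. By the comparison lemma of Section~3, the function $\psi:=d(f_1,f_2)^2$ is smooth and subharmonic on $\tilde M$, and by $\rho$-equivariance it descends to a subharmonic function on $M$; boundedness of $\psi$ is equivalent to boundedness of $z\mapsto d(f_1,f_2)(z)$. On the compact core $M^c$ the function $\psi$ is bounded by continuity, so everything reduces to its behaviour in the cusp neighbourhood $C\cong U(\tau)$. Since $C$ is a half-infinite flat cylinder and $\psi$ is subharmonic there and continuous up to $\partial M^c$, the Phragm\'en--Lindel\"of argument already used in the proof of Lemma~\ref{key} --- a subharmonic function on $C$ that is bounded near the puncture and along $\partial M^c$ attains its maximum in $M^c$ --- reduces matters to showing that $d(f_1,f_2)(z)$ stays bounded as $z$ tends to the puncture.

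To control $d(f_1,f_2)$ near the puncture I would show that $f_1$ and $f_2$ are each asymptotic to a translate of the \emph{same} model map $\varphi^\theta$ along the geodesic axis $A$ of $\rho(\gamma)$. By the Remark above, the twist parameter $\theta=-\sin\nu/(1+\cos\nu)$ attached to a second-order pole depends only on the argument $\nu$ of the residue, so it is common to $f_1$ and $f_2$, while the previous lemma shows the residues also share the same modulus $a_{-2}$. Running the argument of the lemma preceding Proposition~\ref{compute} for each $f_k$ in turn --- it uses only that $\textrm{Hopf}(f_k)$ has a second-order pole, the energy estimate of Proposition~\ref{energy}, and Wolf's sharp cusp expansions from \cite{Wolf} --- should produce translations $\tilde R_1,\tilde R_2$ of $A$ with $d\bigl(f_k(z),\tilde R_k\circ\varphi^\theta(z)\bigr)\to 0$ as $z$ moves into the cusp. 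The triangle inequality then gives
$$d(f_1(z),f_2(z))\le d\bigl(f_1(z),\tilde R_1\varphi^\theta(z)\bigr)+d\bigl(\tilde R_1\varphi^\theta(z),\tilde R_2\varphi^\theta(z)\bigr)+d\bigl(\tilde R_2\varphi^\theta(z),f_2(z)\bigr);$$
the outer terms tend to $0$, and since $\varphi^\theta$ has image in $A$ and each $\tilde R_k$ translates along $A$, the middle term is the constant difference of the translation lengths of $\tilde R_1$ and $\tilde R_2$. Hence $d(f_1,f_2)$ is bounded near the puncture, which is exactly what the reduction above requires.

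The hard part is the middle step: one has to check that $f_1$ and $f_2$ have the \emph{same} asymptotic drift along $A$, so that the comparison translations $\tilde R_1,\tilde R_2$ differ by a bounded amount rather than the reference map itself sliding off to infinity. This is precisely where the full matching of the residue enters --- its argument $\nu$ by hypothesis and its modulus $a_{-2}$ by the previous lemma, hence the common twist $\theta$ --- together with Propositions~\ref{energy} and \ref{compute} and the fine cusp asymptotics of \cite{Wolf}.
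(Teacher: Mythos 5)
Your proof is correct, and at the structural level it follows the same plan as the paper: reduce via subharmonicity and the maximum principle to the behaviour in the cusp, pass to the cylinder blow-up limits $f_k^\infty$, and use the residue matching from the preceding lemma together with $J(f_k)\to 0$ to force the limits onto the core geodesic with a translation ambiguity.

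The one place where your route differs is the final comparison step. The paper compares $f_1^\infty$ and $f_2^\infty$ to \emph{each other} directly: since both Hopf differentials limit to the same $\Phi_0$, one computes $w_0(f_k)=0$, hence $f_k^*g=4\,dx^2$ in $\Phi_0$-coordinates, so each $f_k^\infty$ is the (unique up to translation) constant-speed horizontal map onto the geodesic. You instead compare each $f_k$ to a single reference model $\varphi^\theta$ by rerunning the argument of the lemma preceding Proposition~\ref{compute}, producing translations $\tilde R_k$ and finishing with the triangle inequality. Both routes use the same technical inputs, and your version is slightly more conceptual in that it re-uses an argument already made rather than redoing the $\Phi_0$-coordinate computation. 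One small caveat worth stating explicitly: re-running that lemma's argument for the arbitrary maps $f_k$ of Section~5.2 requires not only Proposition~\ref{energy} and the Wolf asymptotics you cite, but also the first (unnamed) lemma of Section~5.2 — that the image of $f_k$ stays within a bounded neighbourhood of the axis of $\rho(\gamma)$ — which is what plays the role of Lemma~\ref{key} for $f_k$ and gives the compactness needed to extract the cylinder limits. That lemma is in place before \ref{laststep}, so the gap is only expository.
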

 
 \begin{proof}
 It suffices to bound $d(f_\infty^1,f_\infty^2)$ as then it is constant and we can lift to the universal cover. By \cite[Proposition 3.8]{Wolf} we can express $$\Phi_k = \Big (a_{-2}e^{i\nu}+e^{i\nu}O(e^{-Ay})\Big ) dz^2$$ in the cylinder coordinates, where $a_{-2}$ is real. Thus, upon taking $s\to \infty$, the Hopf differential of $f_k^\infty$ is $a_{-2}e^{i\nu}dz^2$. That is, the Hopf differentials of $f_1^\infty$ and $f_2^\infty$ agree. We denote this differential by $\Phi_0$, and highlight that the $\Phi_0$-metric is nonsingular. Set $$w_0(f_k)=\frac{1}{2}\log H_0(f_k)(z) - \frac{1}{2}\log|\Phi_0(z)|.$$ Here $H_0$ denotes the holomorphic energy in the $\Phi_0$-metric, and analogously for the other quantities. From above it is clear that $J_0(f_k)=0$ so $H_0(f_k)=L_0(f_k)$. From $|\Phi_0|=H_0(f_k)^{1/2}L_0(f_k)^{1/2}$ we see $w_0(f_k)=0$. One can compute $e_0=2\cosh 2(w_0(f_k))$. In a coordinate $z=x+iy$ such that $\Phi_0=dz^2$, $$f_k^*g = (e_0+2)dx^2 + (e_0-2)dy^2=2dx^2.$$ Let $\gamma_h$ and $\gamma_v$ be horizontal and vertical curves for the $\Phi_0$-metric. Explicitly, we mean the tangent vectors for $\gamma_h$, $\gamma_v$ always evaluate under $\Phi_0$ to positive and negative numbers respectively. Then, $$\ell(f_k(\gamma_h))=\int_{\gamma_h}\sqrt{e_0+2}dx \hspace{1mm} , \hspace{1mm} \ell(f_k(\gamma_v))=\int_{\gamma_h}\sqrt{e_0-2}dy$$ and we see $$\ell(f_k(\gamma_h))=2\ell(\gamma_h) \hspace{1mm} , \hspace{1mm} \ell(f_k(\gamma_v))=0.$$ Therefore, if $v_a$ is the tangent vector to the geodesic at a point $a$ then for all points $z$, $(df_k)_z(\partial_x) = 2v_{f_k(z)}$ and $(df_k)_z(\partial_y)=0$. In particular, $f_k$ is a constant speed map onto the geodesic in the horizontal direction and constant in the vertical direction. Any two such maps differ by a translation. This establishes the result.
 \end{proof}
 
We apply Lemma \ref{unique} to obtain the uniqueness portion of Theorem \ref{first}. If $f_1\neq f_2$, which is only possible if $\rho$ stabilizes a geodesic, then $f_2$ may be obtained from $f_1$ by precomposing with a lift of the translation found in Lemma \ref{laststep}. The results in this section constitute the proof of Theorem \ref{first}.
\end{subsection}
\end{section}
\begin{section}{The proof of Theorem 1.2}
\begin{subsection}{Non-reductive representations}
When $\rho$ is not reductive we can still produce a harmonic map that will be relevant to the domination problem. The content of the following exposition is contained in \cite{DT} and \cite{GK}. Assume $\rho$ fixes a point $\xi$ on $\partial_\infty X$. Given any geodesic ray $\eta:[0,\infty)\to X$ with an endpoint on $\partial_\infty X$, the \textit{Busemann function} $\beta_\eta:X\to \mathbb{R}$ is defined by $$\beta_\eta(x) = \lim_{t\to \infty} (d(\eta(t),x)-t).$$ The fact that this is well-defined and continuous is standard \cite{Bridson}. Now assume the endpoint is $\xi$. For any isometry $\gamma$ with $\gamma\cdot \xi=\xi$ there is a $m(\gamma)\in \mathbb{R}$ such that $$\beta_\eta(\gamma\cdot x)=\beta_\eta(x)+m(\gamma)$$ and $|m(\gamma)|=\ell(\gamma)$. It is easy to see the function $m\circ \rho :\Gamma\to \mathbb{R}$ is a group homomorphism. Let $\tilde{\eta}$ be any biinfinite oriented geodesic in $\mathbb{H}$ and let $\rho^{red}$ be the representation $\Gamma\to \textrm{PSL}_2(\mathbb{R})$ that acts by translations along $\tilde{\eta}$ with lengths $m\circ\rho$, with signs chosen according to the orientation. Since $\rho^{red}$ stabilizes a geodesic there is a family of equivariant harmonic maps as in Theorem \ref{first}. By construction, for all $\gamma'\in \Gamma$, $$\ell(\rho^{red}(\gamma'))=\ell(\gamma').$$ Consequently, the problem of dominating $\rho$ in length spectrum is equivalent to dominating $\rho^{red}$ in length spectrum. Henceforth if $\rho$ is not reductive we replace it with $\rho^{red}$.
\end{subsection}
\begin{subsection}{Digression: elliptic monodromy}
Looking toward domination, it is necessary to understand the behaviour of a harmonic map $f$ when $\rho$ has elliptic monodromy. In the event $\rho$ has hyperbolic monodromy, the choice of parameter $\theta$ will have no effect here, so we assume $\theta=0$. Let $\xi$ be the point on $\partial_\infty \tilde{M}$ associated to the horocycle for $\gamma$ and let $F$ be the set of points in $X$ fixed by $\rho(\gamma)$.

\begin{prop}
   In the setting above, as $z\to \xi$ the function $f$ limits to an element of $F$. Furthermore $e(f)(z)\to 0$.
\end{prop}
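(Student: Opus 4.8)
The plan is to combine the energy comparison of Proposition~\ref{energy} with an elementary potential‑theoretic argument on the cusp. I work near the cusp $C$, which in the flat‑cylinder metric is $\{x+iy:0\le x\le\tau\}/\langle z\mapsto z+\tau\rangle$ with $\xi$ corresponding to $y\to\infty$; since $\rho(\gamma)$ is elliptic, $F\neq\emptyset$, and I fix a point $q_0\in F$. Because $\rho(\gamma)$ is not hyperbolic, Theorem~\ref{first} gives that $\Phi:=\textrm{Hopf}(f)$ has a pole of order at most $1$ at $\xi$. First I would dispose of the ``furthermore'': let $h:\mathbb H\to\mathbb H$ be the harmonic diffeomorphism of \cite{Wan} with $\textrm{Hopf}(h)=\Phi$ and $H(h)\ge1$. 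By Wan's uniqueness $h$ intertwines the $\Gamma$‑action with a discrete group of isometries of $\mathbb H$, so it descends to a harmonic diffeomorphism of $M$ onto a hyperbolic surface, which has a genuine cusp over $\xi$ because the pole of $\Phi$ there has order $\le1$; Wolf's cusp analysis \cite[page~516]{Wolf} then says $h$ is asymptotic there to an isometry between cusps, so $e(h)$ is bounded near $\xi$ in the hyperbolic metric. By Proposition~\ref{energy}, $e(f)\le e(h)$ is bounded there too, i.e. $e_{\sigma'}(f)(z)\lesssim y^{-2}\to0$ in the flat‑cylinder metric — the second assertion. In particular the horizontal loops $\beta_y:=f(\{y\}\times[0,\tau])$ satisfy $\ell(\beta_y)\to0$, so that $d(f(z),\rho(\gamma)f(z))\le\ell(\beta_y)\to0$.

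The heart of the argument would be to show that $f$ has precompact image near $\xi$. I would study $w:=d(f(\cdot),q_0)$. Since $\rho(\gamma)q_0=q_0$ this is $\langle\gamma\rangle$‑invariant, hence descends to the cusp cylinder, and it is subharmonic there because $d(\cdot,q_0)$ is convex and $f$ is harmonic. Integrating $||df||_{\sigma'}\lesssim y^{-1}$ along a vertical ray gives $w(z)\lesssim\log y$ deep in the cusp, so in the coordinate $\zeta=e^{2\pi iz/\tau}$ — in which the cusp becomes a punctured disk with $|\zeta|\asymp e^{-2\pi y/\tau}$ — one has $0\le w=o(\log(1/|\zeta|))$. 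The circular means of a subharmonic function are convex in $\log(1/|\zeta|)$, so here they are nonnegative, convex, and grow sublogarithmically, hence converge as $\zeta\to0$; the sub‑mean‑value inequality then bounds $w$ near $\xi$, and $f$ maps a neighborhood of $\xi$ into a precompact set. Since $\ell(\beta_y)\to0$, $w$ differs from its circular mean by $o(1)$, so $\lim_{z\to\xi}d(f(z),q_0)$ in fact exists.

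Finally I would invoke a standard removable‑singularity theorem: a finite‑energy harmonic map of a punctured disk into a Hadamard manifold with precompact image extends harmonically across the puncture. Elliptic regularity in the $\zeta$‑coordinate makes the extension smooth at $\zeta=0$, forcing $||df||_{\sigma'}\lesssim|\zeta|$; hence $f(z)\to\zeta_\infty$ for some $\zeta_\infty\in X$ (and $e(f)\to0$ also in the hyperbolic metric). Since $\gamma\cdot\xi=\xi$, equivariance gives $\rho(\gamma)\zeta_\infty=\lim_{z\to\xi}f(\gamma\cdot z)=\zeta_\infty$, so $\zeta_\infty\in F$. (When $\rho(\gamma)$ has finite order one may first pass to the quotient by $\langle\rho(\gamma)\rangle$; alternatively $\zeta_\infty\in F$ follows from $d(f(z),\rho(\gamma)f(z))\to0$ via a $\CAT(-1)$ angle comparison, using that $d\rho(\gamma)$ rotates the normal bundle of $F$ by a fixed positive angle, being compatible with parallel transport along the pointwise‑fixed $F$.) The step I expect to require the most care is the removable‑singularity step when $\rho(\gamma)$ is elliptic of infinite order, where a subsequential limit of translated maps (as in section~5) has to be promoted to honest convergence using the boundedness from the second paragraph; that boundedness argument is also where the elliptic hypothesis is genuinely used, since it provides a point $q_0$ fixed by $\rho(\gamma)$ whose distance function descends to the cusp and grows sublogarithmically — for parabolic monodromy there is no such function and $f$ escapes to $\partial_\infty X$.
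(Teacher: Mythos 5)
Your argument is a genuinely different route from the paper's. The paper works with the approximating maps $f_r$ of Proposition~\ref{existence}: for each $\delta>0$ it builds a modified equivariant map $f_{r,\delta}$ that projects the values in a deep horoball onto a $\delta$-ball around a point of $F$, checks that $f_{r,\delta}$ has no more energy than $f_r$, and concludes from energy minimization plus the uniqueness theory with boundary values that $f_r=f_{r,\delta}$; iterating over $\delta_n=2^{-n}$ and passing to the limit gives both the convergence of $f$ to a point of $F$ and, via Cheng's lemma, the energy decay. You instead work directly with the limit $f$ on the cusp cylinder using potential theory: subharmonicity of $d(f,q_0)$, sublogarithmic growth from the flat-cylinder energy estimate, convexity of circular means, and vanishing oscillation on the horizontal loops. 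Both start from the same energy bound (Proposition~\ref{energy} plus Wolf's cusp analysis); what your route buys is that it avoids the projection construction and the re-examination of the approximating sequence, arguing on $f$ itself. One small slip: your first paragraph asserts $e_{\sigma'}(f)\lesssim y^{-2}\to 0$ is ``the second assertion,'' but the $e(f)\to 0$ in the proposition is in the hyperbolic metric (that is what feeds into Section 6.3, where it is compared with $e(h)\to 1$); the flat-cylinder statement is strictly weaker, and you do return to the hyperbolic claim only at the end.

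The step that needs repair is the removable-singularity invocation. To view $f$ as a harmonic map of a punctured disk you must quotient the target by $\langle\rho(\gamma)\rangle$, and when $\rho(\gamma)$ is an elliptic of infinite order this group need not be discrete in $\textrm{Isom}(X,g)$ (e.g.\ an irrational rotation), so $X/\langle\rho(\gamma)\rangle$ is not a Riemannian manifold, or even Hausdorff, and the standard removable-singularity theorem does not apply; when $\rho(\gamma)$ has finite order the quotient is an orbifold with singular locus exactly $F$, which is again outside the usual statement. You flag this yourself, but the fix is already in your toolkit and makes the removable-singularity step unnecessary: take any subsequential limit $p_1$ of $f$ (precompactness gives one), note $p_1\in F$ from $d(f(z),\rho(\gamma)f(z))\to 0$, then set $u:=d(f(\cdot),p_1)^2$; this is subharmonic, bounded, $\langle\gamma\rangle$-invariant (because $\rho(\gamma)p_1=p_1$), with vanishing oscillation on circles, and its circular means converge. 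Since $u\to 0$ along the chosen subsequence, the circular-mean limit is $0$, hence $u\to 0$ and $f(z)\to p_1$. With that in hand, the statement $e_\sigma(f)\to 0$ follows most cleanly not from a $C^1$ extension across the puncture but directly from Cheng's lemma applied on the universal cover: for $z$ deep enough in the horoball, $B_\sigma(z,1)$ is mapped into a small ball about $p_1$, so $\|df\|_\sigma(z)$ is small. This is in fact the same final step the paper uses.
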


\begin{proof}
 Let $B$ be a relevant horoball for $\gamma$ in the universal cover. By adapting a procedure from \cite[Proposition 4.16]{GK}, we first show that for any choice of $\delta>0$ and $\rho$-equivariant map $w$ that has a uniform Lipschitz constant in $B$ there is a $\rho$-equivariant map $w_\delta$ such that
\begin{itemize}
    \item $w_\delta = w$ on $\tilde{M}\backslash(\Gamma\cdot B)$,
    \item $d(w_\delta(p),w_\delta(q))\leq d(w(p),w(q))$ for all points $p,q\in B$, and 
    \item there is a smaller horoball $B'\subset B$ such that $f_\delta(B')$ is contained in the intersection of the convex hull of $f(B')$ and a ball of radius $\delta$.
\end{itemize}
 Towards this let $\mathcal{D}$ be a fundamental domain for the image of $\partial B$ in the quotient and let $p\in \mathcal{D}$. Let $\pi_t$ be the closest point projection from $B$ onto the closed horoball of distance $t>0$ from $\partial B$ and put $p_t=\pi_t(p)$. Note the map $t\mapsto \pi_t(p)$ is nothing more than the transverse horospherical flow for ($B,\partial B,\xi$). By hyperbolic trigonometry (see \cite[Appendix A]{GK}), $$d(p_t,\gamma\cdot p_t)\to 0$$ as $t\to \infty$.  We next find fundamental domains $\mathcal{D}_t$ of $\pi_t(\partial B)$ containing $p_t$ such that $\textrm{diam}\mathcal{D}_t\to 0$ as $t\to \infty$. By the Lipschitz condition $$d(w(p_t),\rho(\gamma) \cdot w(p_t))\to 0$$ and $\textrm{diam}f(\mathcal{D}_t)\to 0$ as $t\to \infty$. 
 
 Next, there is an $\epsilon(\delta)>0$ such that if $d(x,\rho(\gamma)\cdot x)<\epsilon(\delta)$ then $$d(x,F)<\delta/2.$$ In particular, for $t$ large enough there is a $q_t\in F$ such that $d(w(p_t),q_t)<\delta/2$ and $\textrm{diam}w(\pi_t(\mathcal{D}))<\delta/2$. This implies the $\langle \rho(\gamma) \rangle$-invariant set $w(\pi_t(\partial B))$ is contained in $B(\delta,q_t)$. If $\pi_{B_\delta}$ is the closest point projection onto this ball, then take $w_\delta$ to be the $\rho$-eqiuivariant map that coincides with $w$ on $\tilde{M}\backslash (\Gamma\cdot \pi_t(B))$ and with $\pi_{B_\delta}\circ w$ on $\Gamma\cdot \pi_t(B)$. This has all of the required properties. 
 
  With that cleared up recall that the total energy of $f$ is finite in $B$, so that $\textrm{Hopf}(f)$ has a pole of order at most $1$. By Proposition \ref{energy} and \cite[Proposition 3.13]{Wolf}, $$e(f)\leq A$$ in $B$. From here we make the assumption that $M$ has at least two punctures, around one of which $\rho$ has hyperbolic monodromy. This is the most complicated situation and the other cases are resolved similarly.
  Returning to the sequence $f_r$ from the proof of Proposition \ref{existence}, we may enlarge $A$ if necessary to obtain $$e(f_r)\leq A$$ for all $r$. This guarantees a uniform Lipschitz constant across all $f_r$. Next consider the maps $f_{r,\delta}$. We underline that they agree with $f_r$ on $\partial M^c$ (here we are using the notations and conventions of Proposition \ref{existence}). By definition $$e(f_{r,\delta})\leq e(f_r)$$ everywhere, so that $$E_{M^c}(f_{r,\delta})\leq E_{M^c}(f_r).$$ By the energy minimizing property of harmonic maps this forces $f_{r,\delta}$ to be harmonic. From the finite energy theory, if $\rho$ does not fix a point on the boundary then $f_r=f_{r,\delta}$. If $\rho$ does fix such a point then $f_r$ and $f_{r,\delta}$ differ by a translation along a geodesic. Since they are set to be equal on $\partial M^c$ they agree everywhere. Taking $r\to \infty$ implies $f$ has the listed properties of $f_\delta$.
 
 Now we put $\delta_n=2^{-n}$ and iterate the procedure above. We obtain a sequence of horoballs tending to $\xi$ whose image under $f$ is contained in a closed ball of radius $\delta_n$ that intersects $F$ non-trivially. Taking $n \to \infty$ the first result follows. 
 
 To see that the energy decays to zero, we argue by contradiction: suppose there is a $\delta_0> 0$ and a sequence $z_n$ tending to $\xi$ such that $e(f)(z_n)\geq \delta_0$. Consider the cylinder $$\mathcal{C}=\{(x,t)\in [0,1]\times [0,1] : (0,t)\sim (1,t)\}$$ and take a sequence of conformal embeddings $b_n:\mathcal{C}\to D/\langle \gamma\rangle$ such that the projection of $z_n$ is contained in $\textrm{int}(b_n(\mathcal{C}))$. The energy density of $B_n:=f_\gamma\circ b_n$ is uniformly bounded, and we can choose $b_n$ so that $e(B_n)(x,t)=e(f_\gamma)(b_n(x,t))$. By the usual argument, $B_n$ subconverges in the $C^\infty$ sense to a harmonic map $B_\infty:\mathcal{C}\to X/\langle \rho(\gamma)\rangle$. From the first result, we see $B_\infty$ is constant, which forces a contradiction in that $e(f)(z_n)$ must then tend to $0$.
\end{proof}
\end{subsection}
\begin{subsection}{The proof of Theorem 1.2}
 Take any $\rho$-equivariant harmonic map $ (\tilde{M},\sigma)\to X$ produced by Theorem \ref{first} or a map $\tilde{M}\to \mathbb{H}$ from subsection 6.1 and call it $f$. Let $\Phi$ denote the Hopf differential. By Theorem \ref{frick} there is a surface $(N,\sigma_0)$ with cusps and infinite funnels attached along closed geodesics as well as a harmonic map $h$ with Hopf differential $\Phi$ taking $M$ diffeomorphically onto the interior of the convex core $N$. Lift $h$ to a map between the universal covers, that we will still denote $h$. We will always identify $\tilde{M}$ and the universal cover of $N$ with $\mathbb{H}$. Let $j$ denote the holonomy of $h^*\sigma$. Proposition \ref{energy} implies that $$\psi:=f\circ h^{-1}:(h(\mathbb{H}),\sigma_0)\to (X,g)$$ is $(j,\rho)$-equivariant and $1$-Lipschitz. Indeed, $h^*\sigma\geq f^*g$ in the sense that $$(h^*\sigma)_z(v,v)\geq (f^*g)_z(v,v)$$ for all points $z$ and non-zero vectors $v\in T_z\mathbb{H}$. Hence for any two points $x,y\in \mathbb{H}$ and path $c$ from $x$ to $y$, $$\ell_g(\psi(c))=\ell_{f^*g}(h^{-1}(c))\leq \ell_{h^*\sigma}(h^{-1}(c))=\ell_{\sigma}(c).$$ Now, extend $\psi$ to the lift of the boundary of the convex core via uniform continuity, and precompose $\psi$ with the $(j,j)$-equivariant $1$-Lipschitz nearest point projection from $\mathbb{H}$ to the preimage of the convex core of $\mathbb{H}/j(\Gamma)$. This resulting map from $\mathbb{H}\to X$ is $1$-Lipschitz and $(j,\rho)$-equivariant. This chosen $j$ is $j_M$ from the statement of Theorem \ref{bigguy}.

\begin{remark}
For this construction, it does not matter which initial harmonic map $f$ we choose. Going forward we work with $\theta=0$.
\end{remark}

\begin{lem}
In general $j_M$ does not strictly dominate $\rho$. If $X=\mathbb{H}$, a necessary and sufficient condition is that the image of any peripheral isometry under $\rho$ is elliptic. In the general case, a sufficient condition is that 
$$\limsup_{m\to \infty} \frac{d(\psi(p),\rho(\gamma^m)\psi(p))}{2\log m}<1.$$ This will be achieved if $\rho$ has no hyperbolic or parabolic monodromy, but is still possible with parabolic monodromy.
\end{lem}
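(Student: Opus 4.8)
The plan is to treat the three assertions separately, throughout working with the $1$-Lipschitz, $(j_M,\rho)$-equivariant map $\psi=f\circ h^{-1}$ and the energy comparison of Proposition~\ref{energy}.

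\textbf{General non-strictness.} The obstruction I would point to is hyperbolic monodromy. If $\rho(\gamma)$ is hyperbolic for a peripheral $\gamma$, then by Theorem~\ref{bigguy} $j_M(\gamma)$ is hyperbolic with $\ell(j_M(\gamma))=\ell(\rho(\gamma))>0$, so for any $(j_M,\rho)$-equivariant $w$ and any $p$ on the axis of $j_M(\gamma)$ one gets $\ell(\rho(\gamma))\le d(w(p),w(j_M(\gamma)p))\le\mathrm{Lip}(w)\,\ell(j_M(\gamma))$, forcing $\mathrm{Lip}(w)\ge1$. As this is the typical situation, $j_M$ does not strictly dominate $\rho$ in general.

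\textbf{The case $X=\mathbb{H}$.} First I would observe that a reductive $\rho:\Gamma\to\mathrm{PSL}_2(\mathbb{R})$ with a non-elliptic peripheral fixes no point of $\partial_\infty\mathbb{H}$ (it stabilizes no geodesic, and the stabilizer of a single boundary point is a parabolic subgroup in the sense of Section~2), so the exceptional alternative of Theorem~\ref{gkthm} never occurs and $j_M$ strictly dominates $\rho$ iff $\sup_\delta\ell(\rho(\delta))/\ell(j_M(\delta))<1$. For the forward implication I argue the contrapositive: a hyperbolic peripheral is handled as above, while for a parabolic peripheral $\gamma$ (so $j_M(\gamma)$ is parabolic too) I pick $g$ moving the fixed points of $\rho(\gamma)$ and of $j_M(\gamma)$ and use the trace asymptotics $\ell(\rho(\gamma^m g))=2\log m+O(1)=\ell(j_M(\gamma^m g))+O(1)$ to get $\sup\ge1$. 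For the converse, assume every peripheral $\rho(\gamma)$ is elliptic; then every $j_M(\gamma)$ is parabolic, $N=\mathbb{H}/j_M(\Gamma)$ is a finite-volume cusped surface, and $\psi:\mathbb{H}\to X$ is globally defined. Writing $P=|\Phi|/\sigma=\sqrt{H(f)L(f)}=\sqrt{H(h)L(h)}$ and comparing $f^*g=e(f)\sigma|dz|^2+2\Re(\Phi\,dz^2)$ with $h^*\sigma=e(h)\sigma|dz|^2+2\Re(\Phi\,dz^2)$ (the indefinite part has eigenvalues $\pm2P\sigma$), $\psi$ is $\lambda$-Lipschitz exactly when $\lambda^2\ge\sup_M\frac{e(f)+2P}{e(h)+2P}$; since $e(f)+2P=(\sqrt{H(f)}+\sqrt{L(f)})^2$ and the $2P$ terms coincide, this supremum is $\le1$ by Proposition~\ref{energy}. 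To push it below $1$: $M$ noncompact prevents $f$ from being a totally geodesic isometric embedding (that would make the peripherals parabolic), so $e(f)<e(h)$ everywhere by Lemma~\ref{betterenergy}, and with $e(h)\ge H(h)\ge1$ the ratio is continuous and $<1$ on the compact core; at each cusp the proposition in Section~6.2 gives $e(f)\to0$, hence $P\le e(f)/2\to0$ and the ratio $\to0$. So $\lambda<1$.

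\textbf{The sufficient condition.} Set $\mu_\gamma=\limsup_{m\to\infty}d(\psi(p),\rho(\gamma^m)\psi(p))/(2\log m)$, which is independent of $p$ up to a bounded change in the numerator. If $\rho(\gamma)$ is elliptic with fixed point $o$, then $d(\psi(p),\rho(\gamma)^m\psi(p))\le d(\psi(p),o)+d(o,\rho(\gamma)^m\psi(p))=2d(\psi(p),o)$, so $\mu_\gamma=0$: the condition holds when $\rho$ has no hyperbolic or parabolic monodromy. It can also hold with parabolic monodromy, e.g.\ for $X$ a hyperbolic plane rescaled to constant curvature $-\kappa^2<-1$ and $\rho$ with parabolic monodromy, where $d(p,\rho(\gamma)^m p)=\tfrac2\kappa\log m+O(1)$ forces $\mu_\gamma=1/\kappa<1$. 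Assuming now $\mu_\gamma<1$ for every peripheral $\gamma$: on the compact core $N_0$ of $N$, $\psi$ is $\lambda_0$-Lipschitz with $\lambda_0<1$ by the previous paragraph's argument. Given $\delta$ with $j_M(\delta)$ hyperbolic, I decompose its geodesic representative into arcs in $N_0$ and excursions into cusps; an excursion winding $m$ times about the cusp of $\gamma$ has length $2\log m+O(1)$, while by definition of $\mu_\gamma$ its $\psi$-image has endpoints at distance $\le\mu'\cdot2\log m$ for any fixed $\mu_\gamma<\mu'<1$ once $m$ is large. Using a definite lower bound on the length of the $N_0$-arcs between consecutive excursions to absorb the short excursions, these bounds sum to $\ell(\rho(\delta))\le\lambda\,\ell(j_M(\delta))$ for a uniform $\lambda<1$, i.e.\ $j_M$ strictly dominates $\rho$ in length spectrum. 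For $X=\mathbb{H}$ this upgrades to genuine strict domination via Theorem~\ref{gkthm} (no exceptional case, as above), and in general via the variable-curvature analogue discussed in the remark following it. I expect this last upgrade — equivalently, modifying $\psi$ on cusp neighbourhoods (in the spirit of Section~6.2 and \cite{GK}), with $\mu_\gamma<1$ as the quantitative input, to obtain a $(j_M,\rho)$-equivariant map that is $\lambda$-Lipschitz with $\lambda<1$ rather than merely of controlled diameter — to be the main obstacle.
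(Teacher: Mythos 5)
Your proposal tracks the paper's argument closely on the first two claims — the hyperbolic-peripheral obstruction, the elliptic case via $e(f)\to 0$ and $e(h)\ge 1$, and the rescaled-curvature example are all essentially what the paper does (the paper cites Wolf's Proposition 3.13 for $e(h)\to 1$ rather than $e(h)\ge H(h)\ge 1$, but that is immaterial). The one place you diverge in a way that matters is the sufficient condition. You route it through \emph{length spectrum} domination (decomposing closed geodesics into core arcs and cusp excursions) and then flag the upgrade to genuine strict domination as a gap to be filled by Theorem~\ref{gkthm} or its variable-curvature analogue — but that analogue is precisely what the paper's remark after Theorem~\ref{gkthm} says is \emph{not} available for general $X$, so as written your argument does not close. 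The paper avoids this entirely: because $\psi=f\circ h^{-1}$ is already in hand, it bounds the Lipschitz constant of $\psi$ directly (pointwise $<1$ on compacta by the dichotomy in the proof of Proposition~\ref{energy}; the limsup hypothesis controls the deep cusp excursions) and then precomposes with the equivariant nearest-point projection onto the convex core — no length-spectrum-to-Lipschitz upgrade is ever needed. Your decomposition argument in fact contains everything required: if you run it on the geodesic between two \emph{arbitrary} points of the convex core rather than only on closed geodesics, you obtain exactly this uniform Lipschitz bound on $\psi$, and the gap you flag disappears. One smaller imprecision: for $X=\mathbb{H}$ you claim a reductive $\rho$ with a non-elliptic peripheral ``fixes no point of $\partial_\infty\mathbb{H}$'' — it may stabilize a geodesic and so fix two points; what you actually need (and what holds) is that it does not fix exactly one point, which is what Theorem~\ref{gkthm}'s exceptional case requires. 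This does not affect your conclusion.
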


\begin{proof}
If $\gamma$ is a peripheral isometry such that $\rho(\gamma)$ is hyperbolic, then the translation length is fully encoded by the residue of the Hopf differential, so that $\ell(j_M(\gamma))=\ell(\rho(\gamma))$. Hence $1$ is the optimal Lipschitz constant in this setting. If $j_M(\gamma)$ is parabolic, then by elementary hyperbolic trigonometry (see \cite[Lemma 2.7]{GK}), $$\ell(j_M(\gamma^m))=2\log m + A.$$ If $\rho(\gamma)$ is parabolic then $$\ell(\rho(\gamma^m))\leq 2\log m + A.$$ This follows from \cite[Theorem 1]{horosphere} and a minor modification of the argument in \cite[Lemma 2.7]{GK}. We have equality above if all sectional curvatures of $X$ are $-1$, which implies $\rho$ cannot have parabolic monodromy if $X=\mathbb{H}$. When the image is elliptic we have shown $e(f)\to 0$ at the cusp. From \cite[Proposition 3.13]{Wolf} we have $e(h)\to 1$ at such a cusp. This handles the case $X=\mathbb{H}$.

Working with arbitrary $X$, from the proof of Proposition \ref{energy} we know $\psi$ either has Lipschitz constant $1$ everywhere or the Lipschitz constant is strictly less than $1$ on every compact set. By equivariance of $\psi$ this implies the condition
 $$\limsup_{m\to \infty} \frac{d(\psi(p),\rho(\gamma^m)\psi(p))}{2\log m}<1$$ is sufficient. To see the last statement, simply fix $\kappa<-1$ and consider a rescaled copy of $\mathbb{H}$ with a metric of constant curvature $\kappa$.
 \end{proof}
 
 With $j_M$ in hand, we perturb it to a convex cocompact representation that strictly dominates $\rho$. Let us first assume $j_M$ is convex cocompact. We use the strip deformations of Thurston, which we now describe. Recall that an \textit{arc} of a complete hyperbolic surface $S$ is any non-trivial isotopy class of complete curves in $S$ such that both ends exit into an infinite funnel. A \textit{geodesic arc} is the geodesic representative of an arc.
   
   \begin{defn}
   A \textit{strip deformation} of a hyperbolic surface $S$ along a geodesic arc $\alpha$ is the new surface obtained by cutting along $\alpha$ and gluing a \textit{strip}, the region on $\mathbb{H}$ bounded between two ultraparallel geodesics. The strip is inserted without shearing: so that the two endpoints of the most narrow cross section are identified to a single point $z$, which is called the \textit{waist}. A strip deformation along a collection of pairwise disjoint and non-isotopic geodesic arcs $\alpha_1,\dots, \alpha_n$ is the hyperbolic surface produced by performing strip deformations along $\alpha_k$ iteratively.
   \end{defn}
   
Note that strip deformations along geodesic arcs commute because the curves are disjoint. It was observed by Thurston \cite{Thurston} and proved in full detail in \cite{pap} that as soon as a collection of pairwise disjoint non-isotopic arcs decomposes $S$ into disks, the corresponding strip deformation uniformly lengthens all closed geodesics.
Any geodesic arc $\alpha$ has two parameters associated to a strip deformation, namely the \textit{waist} and the \textit{width}: the thickness of the strip at its most narrow cross section. The lemma below follows from \cite[Theorem 1.8]{margulis}.

 \begin{lem}
 For any choice of geodesic arcs $(\alpha_1,\dots,\alpha_n)$ that decompose $\mathbb{H}/j_M(\Gamma)$ into disks, as well as waist and width parameters $z_{k}$, $w_{k}$, the holonomy of the corresponding strip deformation strictly dominates $j_M$.
 \end{lem}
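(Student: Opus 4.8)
The plan is to deduce the lemma from two ingredients already in place: the uniform-lengthening property of strip deformations from \cite{margulis}, and the Gu\'eritaud--Kassel equivalence of Theorem \ref{gkthm}. Write $j'$ for the holonomy of the strip deformation of $\mathbb{H}/j_M(\Gamma)$ along $\alpha_1,\dots,\alpha_n$ with waists $z_k$ and widths $w_k$. First I would observe that $j'$ is again convex cocompact: inserting finitely many hyperbolic strips along geodesic arcs that run out the funnels yields a hyperbolic structure on the same topological surface whose ends remain funnels, so $j'$ is discrete, faithful, and has compact convex core. In particular, neither $j_M$ nor $j'$ has parabolic elements, so the exceptional case of Theorem \ref{gkthm} is vacuous and it suffices to establish the length-spectrum estimate
$$\sup_{\gamma\in\Gamma}\ \frac{\ell(j_M(\gamma))}{\ell(j'(\gamma))}\ <\ 1 .$$

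For a fixed $\gamma$ the strict inequality $\ell(j'(\gamma)) > \ell(j_M(\gamma))$ is elementary: the $j'$-geodesic in the class of $\gamma$ meets each inserted strip at least $i(\gamma,\alpha_k)$ times, each crossing contributing at least the width $w_k$, and since the $\alpha_k$ cut the surface into disks every essential $\gamma$ crosses some $\alpha_k$; a cut-and-reglue comparison (the collapse map onto $\mathbb{H}/j_M(\Gamma)$ is an isometry away from the strips) upgrades this to $\ell(j'(\gamma)) \geq \ell(j_M(\gamma)) + c\sum_k i(\gamma,\alpha_k)$ with $c = \min_k w_k$. The nontrivial point is uniformity, and this is exactly \cite[Theorem 1.8]{margulis}: since the $\alpha_k$ fill, the function $\gamma\mapsto \sum_k i(\gamma,\alpha_k)$ extends to a continuous homogeneous strictly positive function on geodesic currents, hence is bounded below by $\delta\,\ell_{j_M}(\cdot)$ for some $\delta>0$ on the projectivized space; therefore $\ell(j'(\gamma)) \geq (1+c\delta)\,\ell(j_M(\gamma))$ for all $\gamma$, which is the displayed inequality with constant $\lambda = (1+c\delta)^{-1} < 1$.

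Feeding this into Theorem \ref{gkthm} with ``$j$'' $= j'$ and ``$\rho$'' $= j_M$ gives $C(j',j_M) < 1$, i.e.\ a $(j',j_M)$-equivariant map $\mathbb{H}\to\mathbb{H}$ of Lipschitz constant $<1$; composing with the $1$-Lipschitz nearest-point projection onto the preimage of the convex core of $\mathbb{H}/j_M(\Gamma)$ keeps it so. Hence $j'$ strictly dominates $j_M$, which is the assertion of the lemma.

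The one step I expect to require care is the uniform lengthening: if the statement in \cite{margulis} is phrased infinitesimally or purely in arc-complex language, I would need to transcribe it into the macroscopic marked-length-spectrum form used here, or else run the cut-and-reglue and geodesic-current argument sketched above in full detail. Everything else --- the convex cocompactness check that makes the parabolic exception in Theorem \ref{gkthm} vacuous, and the passage from the length-spectrum bound to an honest contracting equivariant map --- is routine.
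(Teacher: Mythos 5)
Your proof reaches the correct conclusion, but it is organized differently from the paper. The paper's proof consists of a single citation: "The lemma below follows from \cite[Theorem 1.8]{margulis}," where Danciger--Gu\'eritaud--Kassel directly establish that the strip map from the cone over the arc complex parametrizes the space of representations \emph{strictly dominating} $j_M$; the Lipschitz bound thus comes out of the cited theorem without any further conversion. Your route instead first establishes a strict length-spectrum lower bound (deferring the uniformity across all of $\Gamma$ to the same \cite{margulis} result, or to Thurston/\cite{pap}) and then feeds it into Theorem \ref{gkthm} to pass from the length-spectrum ratio $C(j',j_M)'<1$ to a genuine contracting equivariant map. This extra step is a valid hedge in case the DGK statement you have in mind is phrased purely in marked-length-spectrum terms, but it is redundant against the paper's reading of \cite{margulis}, and it relies on exactly the same underlying uniformity input. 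Two small points worth noting in your sketch: first, the intermediate inequality $\ell(j'(\gamma)) \geq \ell(j_M(\gamma)) + c\sum_k i(\gamma,\alpha_k)$ via a $1$-Lipschitz ``collapse map'' is plausible but not quite as obvious as claimed, since the natural projection of a strip back onto the cut arc must be checked to be distance-nonincreasing and the per-crossing gain depends on the angle of the crossing, not just the waist width; second, your invocation of geodesic currents for a surface with funnels and with proper arcs needs the appropriate Bonahon-type framework for surfaces with boundary, which is what \cite{margulis} and \cite{pap} in fact supply. Neither issue is a gap in the logic since you explicitly defer the rigorous uniformity to the cited sources, but they are the steps where the ``elementary'' version of the argument would need genuine work.
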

 
 \begin{figure}[ht]
     \centering
     \includegraphics{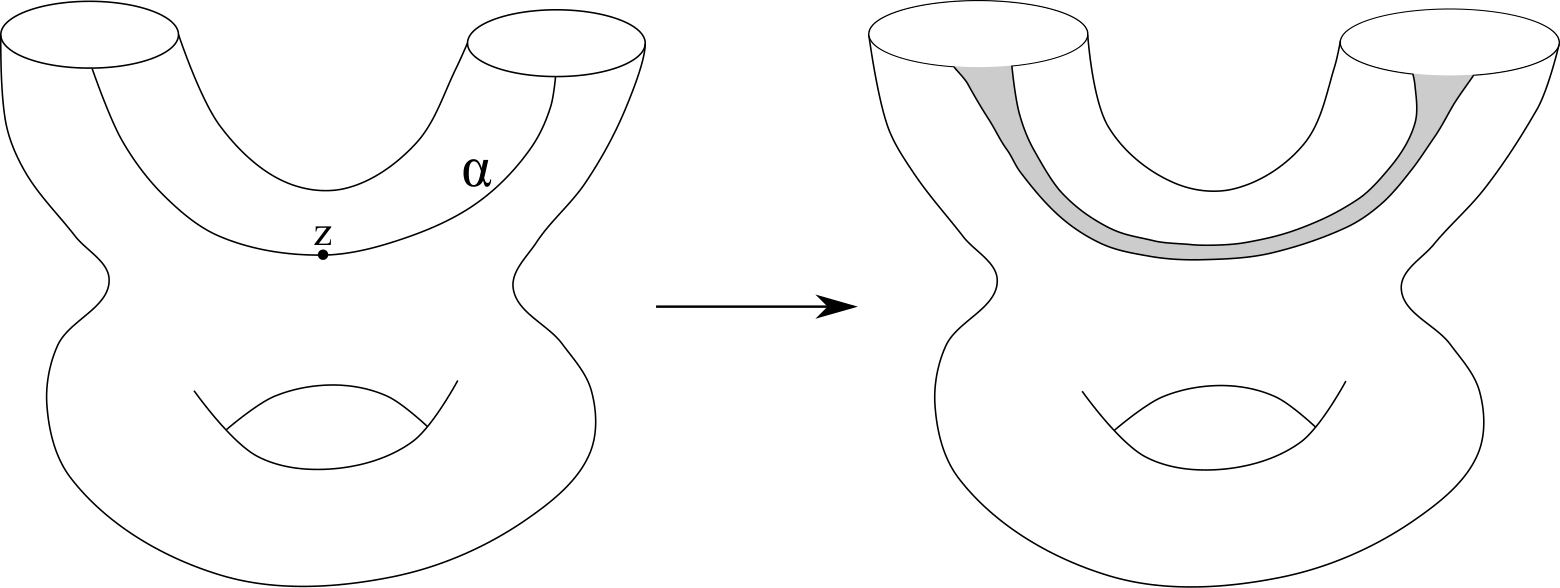}
     \caption{A strip deformation along a geodesic arc on a two-holed torus}
     \label{fig:alien}
 \end{figure}

 \begin{remark}
The complex of \textit{arc systems} $\mathcal{S}$ is the subcomplex of the arc complex obtained by removing all cells that do not divide $S$ into disks.  Danciger, Gu{\'e}ritaud, and Kassel established a homeomorphism between an abstract cone over $\mathcal{S}$ and the subspace of the Fricke-Teichm{\"u}ller space of representations strictly dominating a convex cocompact representation. See \cite{margulis} for the full description.
\end{remark}

This solves the convex cocompact case. Without this condition we proceed as follows. For each puncture $p_k$ in $\mathbb{H}/j_M(\Gamma)$ select disjoint biinfinite geodesic arcs $\alpha_k$ such that both ends of each $\alpha_k$ escape toward $p_k$. Arbitrarily choose points on $\alpha_k$ as a waist parameter and pick some positive width parameters. Insert a hyperbolic strip without shearing, exactly as one would do for a convex cocompact surface. The resulting surface admits a complete hyperbolic metric of infinite area, and therefore its holonomy is convex cocompact (I thank Peter Smillie for pointing this out to me).  The length spectrum of this new holonomy obviously dominates that of $j_M$. Then perform a strip deformation on the new surface to obtain a representation that strictly dominates $j_M$ in length spectrum. By Theorem \ref{gkthm}, in this context length spectrum domination implies domination in the regular sense.

 This completes the proof of Theorem \ref{bigguy} for complete finite volume hyperbolic manifolds. The general case is now a consequence of Lemma \ref{lem:selberg}.
\end{subsection}
\end{section}
\begin{section}{Maximal immersions in the Grassmanian}
We briefly recall some notions from the theory of geometric structures and construct new maximal immersions into $\textrm{Gr}^+(2,4)$, the Grassmanian of timelike planes in $\mathbb{R}^{2,2}$.
Baraglia developed the correspondence between projective structures and maximal immersions in his thesis \cite{bar}, and Alessandrini and Li furthered this connection for AdS structures in \cite{AL}. We are essentially inputting our new structures into their framework.
\begin{subsection}{A rapid review of geometric structures}
 We give our definitions in terms of flat bundles. A \textit{geometry} is a pair $(G,X)$ with $G$ a Lie group and $X$ a manifold endowed with a transitive and effective action of $G$. Given a $C^\infty$ manifold $M=\tilde{M}/\Gamma$ with $\dim M = \dim X$ and a representation $\rho: \Gamma\to G$, we consider the flat bundle $X_\rho$.

\begin{defn}
 A section of $X_\rho$ is \textit{transverse} if the associated $\rho$-equivariant map is a local diffeomorphism.
\end{defn}

\begin{defn}
 A $(G,X)$-\textit{geometric structure} on $M$ is the data of a representation $\rho:\Gamma\to G$ and a transverse section $s:M\to X_\rho$.
\end{defn}
In this section, an $\textrm{AdS}^3$ geometric structure is a  $(G,X)$-geometric structure with $$X=\{x\in \mathbb{R}^4: \mathcal{B}(x,x)=-1 \}$$ (recall the definition of $\mathcal{B}$ from subsection 1.2), and $G=\textrm{SO}_0(2,2)$, which preserves $\mathcal{B}$. Here we identify $\textrm{SO}_0(2,2)$ with $(\textrm{SL}_2(\mathbb{R})\times \textrm{SL}_2(\mathbb{R}))/\mathbb{Z}_2$ (see \cite{AL} for this isomorphism). The AdS $3$-manifolds constructed so far are quotients of $X$ and in this way inherit $\textrm{AdS}^3$-structures.
\begin{remark}
Our conventions now differ from that of subsection 1.2., in that now we're using a double cover of $\textrm{AdS}^3$. We've done this so we don't have to adjust computations from \cite{AL}.
\end{remark}
\end{subsection}
\begin{subsection}{Anti-de Sitter geometric structures}
 Fix a complete finite volume hyperbolic surface $M=\tilde{M}/\Gamma$ and consider two reductive representations $\rho,j:\Gamma \to \textrm{SL}_2(\mathbb{R})$ with $\rho$ arbitrary and $j$ geometrically finite. For concreteness, we assume $M$ has one cusp and $\rho$ has hyperbolic monodromy. This is the most interesting situation and other cases are treated similarly. We view the tensor product $\beta:=j\otimes \rho$ as a representation to $\textrm{SO}_0(2,2)$. $M$ has a canonically determined holomorphic structure, and henceforth we view it simultaneously as a hyperbolic surface and a Riemann surface. 
 
  In \cite{AL}, the only necessary tool for starting their analysis is a Higgs bundle. For a full definition see \cite{Hitchin}. The representations induce flat unimodular vector bundles $(E_k, \nabla_k, \omega_k)$ for $k=1,2$. Here, $E_1=\mathbb{R}^4_\rho$, $E_2=\mathbb{R}^4_j$, $\nabla_k$ are the flat connections and $\omega_k$ are the $\nabla_k$-parallel volume forms. $\beta$ gives its own flat bundle $(E,\nabla, \mathcal{B})$, where $E=\mathbb{R}^4_{\beta}=E_1\otimes E_2$, $\nabla = \nabla_1\otimes \nabla_2$, and $\mathcal{B}=-\omega_1\otimes \omega_2$ is the explicit realization of a symmetric signature-$(2,2)$ $\nabla$-flat billinear form on $E$.  For both $\rho$ and $j$, Theorem \ref{first} guarantees the existence of a unique equivariant harmonic map from $\tilde{M}\to \mathbb{H}$ whose Hopf differential has a residue with fixed complex argument at the cusp. Let $f_1,f_2$ denote any choice of equivariant harmonic maps for $\rho$ and $j$ respectively. Composing with the isomorphism to the symmetric space $$\mathbb{H}\to \textrm{SL}_2(\mathbb{R})/\textrm{SO}_2(\mathbb{R}),$$ we view the maps as Hermitian metrics $H_1$, $H_2$ on the bundles $E_1$,$E_2$. The flat connections decompose uniquely as $$\nabla_i = \nabla_{H_i}+\Psi_{i},$$ where $\nabla_{H_i}$ is an $H_i$-unitary connection and $\Psi_{i}\in \Omega^1(M,\textrm{End}(E))$ is self-adjoint in each fiber. Upon complexifying to bundles $E_i^\mathbb{C}$ we have the further decomposition $$\nabla_i= \nabla_{H_i}^{1,0} + \nabla_{H_i}^{0,1}+\Psi_{i}^{1,0}+\Psi_{i}^{0,1},$$ and the data $(E_i,\nabla_{H_i}^{0,1},\Psi_i^{1,0})$ determines a Higgs bundle.
 
 \begin{remark}
 The Higgs bundles come equipped with an extra \textit{parabolic structure}. See \cite{Simpson} and \cite{mondello} for details. This will not be relevant to our discussion.
 \end{remark}
 
With a Higgs bundle in hand, one can redo the local compuations of \cite{AL} to observe:

 \begin{prop}
    The vector bundle $E$ splits as a $\mathcal{B}$-orthogonal direct sum of rank $2$ sub-bundles $E=F_1\oplus F_2$. $F_1$ is timelike and $F_2$ is spacelike.
 \end{prop}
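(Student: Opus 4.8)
\medskip
\noindent\textbf{Strategy of proof.}
The plan is to produce the splitting fibrewise out of the two harmonic metrics and then match it with the local normal form used in \cite{Qiongo}. First I would record that each $f_k$, composed with $\mathbb{H}\to \mathrm{SL}_2(\mathbb{R})/\mathrm{SO}_2(\mathbb{R})$, is an equivariant positive-definite metric $H_k$ on $E_k$, descending to $M$; and that because the bundle $(E_k,\nabla_k,\omega_k)$ is unimodular and the points of the symmetric space of $\mathrm{SL}_2(\mathbb{R})$ are unit-volume inner products, $\omega_k$ is exactly the Riemannian volume form of $H_k$. Hence $J_k:=H_k^{-1}\omega_k\in\mathrm{End}(E_k)$ is a genuine fibrewise complex structure, $J_k^2=-\id$, it is $H_k$-skew-adjoint, and $\omega_k(\cdot,\cdot)=H_k(J_k\,\cdot\,,\cdot)$. (For the bare statement only the existence of equivariant metrics $H_k$ is needed; harmonicity enters afterwards, for the minimality statement, so I would keep the harmonic maps.)

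Next I would set $H:=H_1\otimes H_2$ and $\mathcal{J}:=J_1\otimes J_2$ on $E=E_1\otimes E_2$, and verify the three algebraic identities $\mathcal{J}^2=\id$, that $\mathcal{J}$ is $H$-self-adjoint (a tensor product of two skew-adjoint operators is self-adjoint), and, by evaluating on decomposable vectors,
\[
\mathcal{B}=-\,\omega_1\otimes\omega_2=-\,H(\mathcal{J}\,\cdot\,,\,\cdot\,).
\]
I would then define the sub-bundles as the eigenbundles $F_1:=\ker(\mathcal{J}-\id)$ and $F_2:=\ker(\mathcal{J}+\id)$; being the eigenbundles of the smooth involution $\mathcal{J}$ they give a smooth direct sum $E=F_1\oplus F_2$, and they are $H$-orthogonal since $\mathcal{J}$ is $H$-self-adjoint. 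From the displayed identity one reads off $\mathcal{B}(v,v)=-H(v,v)<0$ on $F_1$ and $\mathcal{B}(v,v)=+H(v,v)>0$ on $F_2$, and $\mathcal{B}(u,v)=-H(u,v)=0$ for $u\in F_1$, $v\in F_2$; so the decomposition is $\mathcal{B}$-orthogonal with $F_1$ timelike and $F_2$ spacelike.

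To pin the ranks I would complexify: writing $E_k\otimes\mathbb{C}=E_k^{1,0}\oplus E_k^{0,1}$ for the $(\pm i)$-eigenline decomposition of $J_k$, the operator $\mathcal{J}\otimes\mathbb{C}$ acts as $-1$ on $E_1^{1,0}\otimes E_2^{1,0}$ and on $E_1^{0,1}\otimes E_2^{0,1}$, and as $+1$ on the two mixed summands $E_1^{1,0}\otimes E_2^{0,1}$ and $E_1^{0,1}\otimes E_2^{1,0}$; each eigenspace of $\mathcal{J}\otimes\mathbb{C}$ is therefore complex rank $2$ and conjugation-invariant, so $F_1$ and $F_2$ are real rank $2$ (this also re-derives the signature $(2,2)$ of $\mathcal{B}$). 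Finally I would check that the pair $(H,\mathcal{J})$ is the same data that appears, in a local frame, in the computations of \cite{Qiongo} for the $\mathrm{SO}_0(2,2)$-Higgs bundle $(E,\nabla,\mathcal{B})$, so that $F_1,F_2$ coincide with the sub-bundles they construct.

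The argument is essentially linear algebra, so there is no deep obstacle. The two points that need genuine care are: the unimodularity input, which is what makes $J_k^2=-\id$ on the nose rather than only up to a positive scalar; and the fact that $\mathcal{J}$, hence the decomposition, extends across the cusp, which is why one wants $f_1,f_2$ to be smooth on all of $M$ (elliptic regularity) so that the parabolic structure mentioned above really plays no role. The remainder is bookkeeping of sign and orientation conventions --- the sign in $\mathcal{B}=-\omega_1\otimes\omega_2$, the chosen orientation of each $\omega_k$, and the timelike/spacelike labelling (if the convention there is the opposite one simply swaps $F_1$ and $F_2$).
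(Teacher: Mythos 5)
Your argument is correct. It is worth pointing out that the paper does not actually write out a proof of this proposition: it simply asserts that ``one can redo the local computations of \cite{Qiongo}'', i.e.\ it defers entirely to Alessandrini--Li's $\mathrm{SO}_0(2,2)$-Higgs-bundle calculations. What you have supplied is the underlying linear algebra made explicit and self-contained: unimodularity of $(E_k,\nabla_k,\omega_k)$ makes $J_k:=H_k^{-1}\omega_k$ a genuine fibrewise complex structure, the tensor $\mathcal J=J_1\otimes J_2$ is then an $H$-self-adjoint involution with $\mathcal B=-H(\mathcal J\cdot,\cdot)$, and the $\pm1$-eigenbundles of $\mathcal J$ give the orthogonal splitting with the advertised causal types. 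This is essentially the same construction that sits inside the Higgs-bundle computation (your $(\pm i)$-eigenline decomposition $E_k^{\mathbb C}=E_k^{1,0}\oplus E_k^{0,1}$ is precisely the line-bundle splitting $L_k\oplus L_k^{-1}$ appearing in \cite{Qiongo}), but you get the result without invoking the Higgs formalism, and your complexification step simultaneously re-derives the signature $(2,2)$ and the ranks, which the paper takes as given. Two minor remarks: you are right that harmonicity plays no role here, only equivariance and smoothness of the $H_k$; and the phrase ``extends across the cusp'' is not quite the right framing, since the cusps are not points of $M$ --- the relevant fact is simply that $H_1,H_2$, hence $\mathcal J$ and its eigenbundles, are smooth sections over the open surface $M$, which follows from elliptic regularity of the harmonic maps as you say.
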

 
  We define a circle bundle $U$ by $$U=\{v\in F_1 : \mathcal{B}(v,v)=-1\}.$$ We let $p:U\to M$ be the projection map. We have a representation $$\overline{\beta}:= \beta \circ p_* : \pi_1(U) \to \textrm{SO}_0(2,2)$$ that is trivial on the fibers. As in \cite{AL} we construct a geometric structure on $U$ with holonomy $\overline{\beta}$. Set $M_\beta$ to be the sub-bundle of $E$ consisting of vectors $v$ with $\mathcal{B}(v,v)=-1$. Let $M_{\overline{\beta}}$ be the pullback bundle $p^*M_\beta$. Now define $s:U\to M_{\overline{\beta}}$ to be the \textit{tautological section}, the map that reframes each $v\in U$ as an element of $M_{\overline{\beta}}$.

Let $g_k$ be the pullback metric $f_k^*\sigma$ on $M$. We write $g_1>g_2$ to mean $g_1(v,v)>g_2(v,v)$ for all non-zero vectors $v$. Note that $g_1$ is a non-degenerate positive definite metric of constant curvature $-1$, while $g_2$ may degenerate at points. It follows from subsection 6.3 and Theorem \ref{gkthm} that if $g_1>g_2$ then $j$ dominates $\rho$. If $g_1>\lambda g_2$ for some $\lambda<1$, this domination is strict. Again, from the local calculations of \cite{AL} we obtain the following.
\begin{prop}
   If $\rho$ and $j$ are chosen so that $g_2>g_1$ then the tautological section $s$ as defined above is transverse. Therefore, the circle bundle $U$ admits an $AdS^3$ geometric structure with holonomy $\overline{\beta}$.
\end{prop}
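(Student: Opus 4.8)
The statement is local, so the plan is to reduce transversality of $s$ to a pointwise rank condition on an associated developing map and then evaluate it using the local frame computations of \cite{Qiongo}, the hypothesis $g_2>g_1$ surfacing as the nonvanishing of a Jacobian. Let $\widehat U$ be the pullback of $U\to M$ to $\mathbb H$, i.e.\ $\widehat U=\{(z,v):z\in\mathbb H,\ v\in(F_1)_z,\ \mathcal B(v,v)=-1\}$, a $3$-manifold carrying a $\Gamma$-action through $\beta$; the tautological section $s$ corresponds to the $\beta$-equivariant map $\mathrm{dev}\colon\widehat U\to\textrm{AdS}^3\subset\mathbb R^{2,2}$, $(z,v)\mapsto v$ (in the flat, $\beta$-equivariant trivialisation $E\cong\mathbb R^{2,2}$), and $s$ is transverse iff $\mathrm{dev}$ is a local diffeomorphism. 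As $\dim\widehat U=\dim\textrm{AdS}^3=3$, this holds iff $d\,\mathrm{dev}$ has rank $3$ everywhere.

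First I would compute $d\,\mathrm{dev}$. Split $T_{(z,v)}\widehat U=\mathcal V\oplus\mathcal H$, with $\mathcal V$ the tangent line to the fibre circle and $\mathcal H\cong T_z\mathbb H$ the horizontal space for the connection on $\widehat U\to\mathbb H$ induced by $\nabla$ on $F_1$. Differentiating $t\mapsto(\cos t)v+(\sin t)v'$ (where $v'\in(F_1)_z$, $\mathcal B(v',v')=-1$, $\mathcal B(v,v')=0$) gives $d\,\mathrm{dev}(\mathcal V)=\mathbb R v'\subset(F_1)_z\setminus\{0\}$; differentiating along an $F_1$-parallel horizontal lift of $\xi\in T_z\mathbb H$ gives $d\,\mathrm{dev}(\widehat\xi)=\nabla_\xi v=\mathrm{pr}_{F_2}\nabla_\xi v=\mathrm{II}(v,\xi)$, where $\mathrm{II}\colon F_1\otimes TM\to F_2$ is the second fundamental form of the $\mathcal B$-orthogonal splitting $E=F_1\oplus F_2$. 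Because $F_1\cap F_2=0$ we get $\operatorname{rank}(d\,\mathrm{dev})=1+\operatorname{rank}\mathrm{II}(v,\cdot)$, so the claim reduces to showing that $\mathrm{II}(v,\cdot)\colon T_zM\to(F_2)_z$ is an isomorphism of $2$-dimensional spaces for every $z\in M$ and every unit $v\in(F_1)_z$.

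Next comes the local computation, following \cite{Qiongo}. In a conformal coordinate $z$ with $\sigma=\sigma(z)|dz|^2$, harmonicity gives $g_k=f_k^*\sigma=e(f_k)\sigma\,dz\,d\bar z+\Phi_k\,dz^2+\overline{\Phi_k}\,d\bar z^2$. Writing the $F_1$- and $F_2$-parts of $\nabla$ through $\partial f_k,\bar\partial f_k$ produces an explicit matrix for $\mathrm{II}(v,\cdot)$ in a $\sigma$-orthonormal frame of $TM$ and a $\mathcal B$-adapted frame of $F_2$, whose determinant is a nonzero factor (depending on $v$ and $\sigma$) times a quantity built from $g_1,g_2$ alone, proportional to $\det(g_2-g_1)$ — equivalently to $(\sqrt{H(f_2)}-\sqrt{H(f_1)})(\sqrt{L(f_2)}-\sqrt{L(f_1)})$. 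Hence, wherever $g_2-g_1$ is positive definite, this determinant is positive and $\mathrm{II}(v,\cdot)$ is an isomorphism for every unit $v$.

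Finally, by hypothesis $g_2>g_1$ at every point of $M$ — including over the cusps, where the $\Phi_k$ have poles of order at most $2$ with the residues prescribed in Theorem \ref{first}, so that $g_1$ and $g_2$ are genuine smooth metrics there — so $g_2-g_1$ is positive definite everywhere, the Jacobian above never vanishes, $d\,\mathrm{dev}$ has rank $3$ on all of $\widehat U$, and $s$ is transverse. Together with $\overline\beta\colon\pi_1(U)\to\textrm{SO}_0(2,2)$ this exhibits $(\overline\beta,s)$ as an $(\textrm{SO}_0(2,2),\textrm{AdS}^3)$-geometric structure on $U$. The one genuine obstacle is the third step: faithfully transcribing the frame calculation of \cite{Qiongo} and verifying that the Jacobian of $\mathrm{II}(v,\cdot)$ is controlled by $g_2-g_1$ for \emph{every} $v$ on the fibre circle (rather than merely that $\mathrm{II}$ is nondegenerate as a bilinear form); the cusps require no separate treatment since the inequality $g_2>g_1$ is imposed pointwise on all of $M$.
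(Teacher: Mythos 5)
Your approach is the same one the paper takes: the paper's ``proof'' is the single sentence \emph{``from the local calculations of \cite{Qiongo} we obtain the following,''} and your argument is a reconstruction of what those calculations must amount to. The structural skeleton you lay down is sound and useful: translate transversality into the rank of the developing map, split $T\widehat U$ into the fibre circle and the horizontal distribution for the $F_1$-connection, identify the horizontal image with the second fundamental form $\mathrm{II}(v,\cdot)\colon T_zM\to (F_2)_z$, and reduce everything to nonvanishing of its $2\times 2$ Jacobian. This is genuinely more informative than what the paper writes.

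One step, however, is not just unverified but as stated incorrect: you claim the Jacobian is ``proportional to $\det(g_2-g_1)$ --- equivalently to $(\sqrt{H(f_2)}-\sqrt{H(f_1)})(\sqrt{L(f_2)}-\sqrt{L(f_1)})$.'' These two quantities are not proportional. Writing $g_k=e_k\sigma\,dz\,d\bar z+\Phi_k\,dz^2+\overline{\Phi_k}\,d\bar z^2$, one computes $\det_\sigma(g_2-g_1)=(e_2-e_1)^2-4|\Phi_2-\Phi_1|^2/\sigma^2$, while $(\sqrt{H_2}-\sqrt{H_1})(\sqrt{L_2}-\sqrt{L_1})$ involves only the magnitudes $|\Phi_k|$ and not the relative phase of $\Phi_2-\Phi_1$; for instance if $H_1=H_2$ the second expression vanishes identically, yet $g_2-g_1$ can be positive definite (take $H_1=H_2=1$, $L_1=2$, $L_2=3$, Hopf differentials aligned). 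So you cannot pass freely between the two, and you need to determine which one actually \emph{is} the Jacobian of $\mathrm{II}(v,\cdot)$ before concluding that the hypothesis $g_2>g_1$ is what makes it nonzero. Since the paper doesn't carry out this step either, this is exactly the gap that a complete proof must fill; your honest flagging of it at the end is the right instinct, but the ``equivalently'' should be removed until the frame computation is actually done.
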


In the setting above, keep the current assumptions on $\rho$ and choose $j_M$ so that one can choose the equivariant harmonic maps to have the same Hopf differential.
 
 \begin{prop}
    Let $M$, $\rho$, etc. be as above. There is a representation $j_M$ such that the induced action of $(j_M,\rho)$ on $\textrm{PSL}_2(\mathbb{R})$ is not properly discontinuous but there is a circle bundle $p:U\to M$ with an $AdS^3$ geometric structure and holonomy $(j_M\otimes \rho)\circ p_*$
 \end{prop}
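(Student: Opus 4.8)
The plan is to take for $j_M$ the geometrically finite representation produced by Theorem \ref{bigguy} and to verify the two halves of the claim separately: first that the pair $(j_M,\rho)$ fails to act properly discontinuously on $\textrm{PSL}_2(\mathbb{R})$, and then that the circle bundle $U$ and tautological section $s$ built above from the $\rho$- and $j_M$-equivariant harmonic maps carry an AdS geometric structure with the asserted holonomy. Throughout I assume, as in Corollary \ref{minimal}, that $\rho$ is non-Fuchsian; this is used in the second half, and for Fuchsian $\rho$ one would in any case have $j_M=\rho$.

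For the first half I would show that the optimal Lipschitz constant $C(j_M,\rho)$ equals $1$, rather than something smaller. Since $\rho$ has hyperbolic monodromy around the cusp, Theorem \ref{bigguy} forces $\ell(j_M(\gamma))=\ell(\rho(\gamma))$ for the corresponding peripheral $\gamma$, so the length ratio $C(j_M,\rho)'$ is $\ge 1$, hence equal to $1$ because $j_M$ dominates $\rho$ in length spectrum; on the other hand the $1$-Lipschitz map $\psi=f\circ h^{-1}$ of subsection 6.3 gives $C(j_M,\rho)\le 1$. I would then invoke Theorem \ref{gkthm}: a reductive $\rho$ with hyperbolic monodromy is not in its exceptional case (it does not fix a single point of $\partial_\infty \mathbb{H}$), so $C(j_M,\rho)<1$ would force $C(j_M,\rho)'<1$, a contradiction. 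Hence $C(j_M,\rho)=1$, and the Gu\'eritaud--Kassel criterion that proper discontinuity of the left--right action is equivalent to the optimal Lipschitz constant being $<1$ (see \cite{GK}) shows the $(j_M,\rho)$-action on $\textrm{PSL}_2(\mathbb{R})$ is not properly discontinuous. Note $j_M$ is geometrically finite, so both cited theorems apply.

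For the second half I would reuse the construction of the preceding paragraphs verbatim: build the Higgs bundle from the harmonic maps $f$ (for $\rho$) and $h$ (for $j_M$), chosen with a common Hopf differential $\Phi$ as in subsection 6.3, form the $\mathcal{B}$-orthogonal splitting $E=F_1\oplus F_2$, set $U=\{v\in F_1 : \mathcal{B}(v,v)=-1\}$ with projection $p$, and take $s$ the tautological section and $\overline\beta=(j_M\otimes\rho)\circ p_*$. By the transversality criterion just proved, it then suffices to check the pointwise strict inequality of pullback metrics $h^*\sigma>f^*\sigma$ on $M$. Since $f$ and $h$ have the same Hopf differential, the difference is purely conformal, $h^*\sigma - f^*\sigma = (e(h)-e(f))\,\sigma\,dz\,d\overline z$, so the claim is exactly that $e(f)<e(h)$ everywhere. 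Here Proposition \ref{energy} and Lemma \ref{betterenergy} enter: as $\rho$ is non-Fuchsian, $f$ is not a $\rho$-equivariant diffeomorphism of $\mathbb{H}$ onto a totally geodesic plane of curvature $-1$, so the equality case of Lemma \ref{betterenergy} is excluded, $H(f)<H(h)$ holds at some (hence, by that lemma, every) point, and the argument proving Proposition \ref{energy} upgrades this to $e(f)<e(h)$ everywhere. Thus $s$ is transverse and $U$ carries an AdS structure with holonomy $(j_M\otimes\rho)\circ p_*$.

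The main obstacle I anticipate is precisely this strict inequality of metrics: one must be certain that non-Fuchsianness genuinely rules out the equality case of Lemma \ref{betterenergy} (whose equality clause concerns diffeomorphic totally geodesic image, impossible for a map equivariant under a non-Fuchsian representation) and that the strictness does not degrade as one moves into the cusp. The latter is automatic, since Lemma \ref{betterenergy} propagates equality from a single point to all of $\mathbb{H}$; everything else reduces to direct applications of Theorem \ref{bigguy}, Theorem \ref{gkthm}, the Gu\'eritaud--Kassel proper-discontinuity criterion, and the transversality proposition established above.
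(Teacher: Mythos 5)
Your proof is correct and follows the same route the paper intends: the paper gives no explicit proof of this proposition, leaving it to be read off from the preceding transversality proposition and the lemma in subsection~6.3 on failure of strict domination, and you supply those details accurately. In particular, your two reductions are exactly the right ones — failure of proper discontinuity from the matching peripheral length $\ell(j_M(\gamma))=\ell(\rho(\gamma))$ via Theorem~\ref{gkthm} and the Gu\'eritaud--Kassel criterion, and transversality of the tautological section from the pointwise strict inequality $e(f)<e(h)$ (hence $g_2>g_1$) forced by the common Hopf differential, Proposition~\ref{energy}, and the exclusion of the equality case by non-Fuchsianness of $\rho$.
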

 \end{subsection}
 \begin{subsection}{Maximal immersions in the Grassmanian}
  The bundle $E$ over $M$ has structure group $\textrm{O}(2,2)$. $\textrm{O}(2,2)$ also acts on the Grassmanian $\textrm{Gr}(2,4)$, the space of $2$-planes in $\mathbb{R}^4$, and preserves $\textrm{Gr}^+(2,4)$, the subspace of timelike planes with respect to $\mathcal{B}$. Changing the fiber produces a flat bundle $E(\textrm{Gr}^+(2,4))$ with fiber $\textrm{Gr}^+(2,4)$ and the same structure group. Each circle fiber of $U$ is timelike (the computation is identical to that of \cite[Theorem 7.1]{AL}) and hence gives rise to a $2$-plane in the relevant fiber for $E$. We thus obtain a section of $E(\textrm{Gr}^+(2,4))$, which furnishes a $\beta$-equivariant map $f:\tilde{M}\to \textrm{Gr}^+(2,4)$. 
 
 To study $f$, we put a pseudo-Riemannian metric on $\textrm{Gr}^+(2,4)$. Given $\mathcal{B}$-orthonormal timelike vectors $v_1,v_2$, we choose an orientation of $\mathbb{R}^4$ and spacelike vectors $w_1,w_2$ completing $v_1,v_2$ to a positively oriented $\mathcal{B}$-othonormal basis of $\mathbb{R}^4$. The classical \textit{Pl{\"u}cker embedding} induces a map $\textrm{Gr}^+(2,4)\to \Lambda^2 \mathbb{R}^4$ given by $$\textrm{Span}(v_1,v_2)\mapsto v_1\wedge v_2.$$ The wedge product $(v,w)\mapsto v\wedge w$ defines a billinear form on $\Lambda^2\mathbb{R}^4$ of signature $(3,3)$ that restricts to an $\textrm{O}(2,2)$-invariant pseudo-Riemannian metric of signature $(2,2)$ on the image of $\textrm{Gr}^+(2,4)$. This then pulls back to a metric on the Grassmanian. From the computations in \cite[Theorem 6.2]{AL}, $f$ is harmonic with respect to this metric. A mapping into the Grassmanian is \textit{maximal} if it is harmonic and conformal.
 
 \begin{prop}
    Of the representations found in Theorem \ref{bigguy}, $j_M$ induces a maximal immersion.
 \end{prop}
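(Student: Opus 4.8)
The plan is to reduce the statement to the identification of the Hopf differential of the Grassmannian-valued map $f\colon\tilde M\to\textrm{Gr}^+(2,4)$, together with the injectivity in Theorem \ref{frick}. Recall that $f$ is harmonic (with respect to the fixed conformal structure of $M$) by the computation of \cite[Section 6]{Qiongo}, and that a harmonic map from a Riemann surface is conformal --- equivalently, a (branched) minimal immersion --- precisely when its Hopf differential vanishes. The first step is therefore to run the local computation of \cite{Qiongo} (following \cite{bar}) in the present cusped setting: writing the flat connection on $E=E_1\otimes E_2$ in a frame adapted to the splitting $E=F_1\oplus F_2$ and to the harmonic maps $f_1,f_2$, whose Higgs fields have $(1,0)$-parts gauge-equivalent to $\Phi_\rho\,dz$ and $\Phi_j\,dz$ with $\Phi_\rho=\textrm{Hopf}(f_1)$, $\Phi_j=\textrm{Hopf}(f_2)$, one reads off that $\textrm{Hopf}(f)$ is a nonzero constant multiple of $\Phi_\rho-\Phi_j$; equivalently, $f$ is conformal exactly when $f_1$ and $f_2$ have the same Hopf differential. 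The one point needing care is that this identity --- in particular the absence of extra lower-order contributions at the cusp --- persists in our finite-volume, double-pole situation; this is a direct adaptation using the asymptotics of \S5 and \cite{Wolf}, but it is where essentially all the work lies.

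Granting the formula, the ``if'' direction is immediate: in the construction of $j_M$ in \S6.3 and in \S7.2 the harmonic maps for $\rho$ and $j_M$ are chosen with the same Hopf differential $\Phi$, so $\textrm{Hopf}(f)=0$ and $f$ is conformal harmonic, hence a branched minimal immersion. To see it is an honest immersion, note that $\rho$ is non-Fuchsian, so by Proposition \ref{energy} the pullback metrics $f_1^*\sigma$ and $f_2^*\sigma$ obey a \emph{strict} inequality --- the same inequality that renders the tautological section transverse in \S7.2 --- and this forces the energy density of $f$ to stay away from zero (it dominates a positive multiple of the energy density of the harmonic diffeomorphism $h=h_f$ of \S6.3, which is the trace of a genuine hyperbolic metric and hence positive everywhere), so $df$ never vanishes and $f$ is a minimal immersion.

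For uniqueness, let $j$ be any of the representations produced by Theorem \ref{bigguy} for which the associated $f$ is a minimal immersion; such an $f$ is in particular conformal, so $\Phi_j=\Phi_\rho=:\Phi$. Since $\rho$ has hyperbolic monodromy, $\Phi$ has a pole of order $2$ at the cusp with residue $\Lambda(\theta)\ell(\rho(\gamma))^2/4\tau$; comparing with the residue $\Lambda(\theta_j)\ell(j(\gamma))^2/4\tau$ of $\Phi_j$ and using that $\theta\mapsto\arg\Lambda(\theta)$ is a bijection onto $(-\pi,\pi)$ with $|\Lambda(\theta)|=1+\theta^2$, we get $\theta_j=\theta$ and $\ell(j(\gamma))=\ell(\rho(\gamma))$. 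Hence $j$ lies in $T(M,\ell(\rho(\gamma)))$ and the Hopf differential of its harmonic diffeomorphism equals $\Phi\in Q(M,P)$ with $P=(\ell(\rho(\gamma)),\theta)$; by the injectivity in Theorem \ref{frick} there is a unique such representation, and by the very definition of $j_M$ in \S6.3 it is $j_M$. (For the stronger form in Corollary \ref{minimal}, over \emph{all} Fuchsian representations dominating $\rho$, the argument is identical; for the family $(j_M^\alpha)$ of Theorem \ref{bigguy} one can also see this directly, since each $j_M^\alpha$ is obtained from $j_M$ by a strip deformation, which strictly lengthens the closed geodesic $\gamma$, so $\ell(j_M^\alpha(\gamma))>\ell(\rho(\gamma))$, forcing $\Phi_{j_M^\alpha}\neq\Phi_\rho$ and hence $\textrm{Hopf}(f)\neq 0$.) The main obstacle, as indicated, is the first step --- pinning down $\textrm{Hopf}(f)$ in the cusped, infinite-energy setting; everything afterward is residue bookkeeping and an appeal to Theorem \ref{frick}.
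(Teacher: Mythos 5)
Your proposal is correct and follows essentially the same route as the paper: both rest on the Alessandrini--Li local computation identifying $\textrm{Hopf}(f)$ as a nonzero constant multiple of $\Phi_j-\Phi_\rho$, then use Proposition \ref{energy} (in the form $g_2>g_1$ strictly, since $\rho$ is non-Fuchsian) for the immersion claim, and compare residues at the cusp to rule out any other $j$. The only cosmetic difference is your uniqueness step: you pass through $\ell(j(\gamma))=\ell(\rho(\gamma))$ and invoke the injectivity in Theorem \ref{frick}, whereas the paper argues more directly that a strictly dominating $j$ has $\ell(j(\gamma))>\ell(\rho(\gamma))$, so the residues cannot match; both are sound, and your version in fact streamlines the upgrade to the stronger Corollary \ref{minimal}.
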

 
 \begin{proof} 
  If we take $j=j_M$ and choose the two harmonic maps to have the same Hopf differential, then $g_2>g_1$ was proved in Section 6. This implies $s$ is transverse and moreover one can deduce that $f$ is an immersion.
 We can find a covering of $M$ by open sets that identify biholomorphically with the upper half space and such that $U$ trivializes as $\mathbb{H}\times S^1$. This gives coordinates $(z,\theta)$, under which we can write $$f=s\wedge \nabla_{\partial_\theta} s.$$ 
  To check $f$ is maximal, $f$ is conformal if and only if $$\langle \nabla_{\partial_z}f, \nabla_{\partial_z}f \rangle_{\textrm{Gr}^+(2,4)} =0.$$ In local coordinates, $$\langle \nabla_{\partial_z}f, \nabla_{\partial_z}f \rangle_{\textrm{Gr}^+(2,4)} = -8 (\phi(f_2)-\phi(f_1)) dv_{\mathbb{R}^4}$$ \cite[Theorem 6.2]{AL}, where $\phi(f_k)$ is the local coordinate expression for the Hopf differential of $f_k$. This is zero by construction.

For any other dominating representation in this paper, it is clear that the map in question cannot be conformal. For instance if $\rho$ has hyperbolic monodromy, the parameters $\theta_1$, $\theta_2$ are chosen arbitrarily, and $j\neq j_M$ strictly dominates $\rho$, then at the relevant cusp $\textrm{res}(\textrm{Hopf}(f_2))= -\Lambda(\theta_2)\ell(j(\gamma))^2/16\pi^2$ and $\textrm{res}(\textrm{Hopf}(f_1))=-\Lambda(\theta_1)\ell(\rho(\gamma))^2/16\pi^2$ with $\ell(j(\gamma))>\ell(\rho(\gamma))$. If these are to be equal the complex arguments must agree, so we need $\theta_1=\theta_2$. In that case it is clear that $$|\textrm{res}(\textrm{Hopf}(f_2))|>|\textrm{res}(\textrm{Hopf}(f_1))|.$$
\end{proof}

The content of Corollary 1.5 is contained in this section.
 \end{subsection}
\end{section}

\bibliographystyle{plain}
\bibliography{main}

\end{document}